\documentclass[draft, 12pt, reqno]{amsart}

\usepackage{tikz}
\usetikzlibrary{automata, positioning, arrows.meta}
\usepackage{subcaption}
\usepackage{graphicx} 
\usepackage[margin=1.5in]{geometry}
\usepackage{amssymb,latexsym,amsfonts,verbatim,amscd,ifthen}
\usepackage{color}
\usepackage{relsize}
\usepackage[utf8]{inputenc}
\usepackage{mathrsfs}
\usepackage{mathbbol}
\usepackage{scalefnt}
\usepackage{graphicx}

\newcommand{\C}{{\mathbb C}}
\newcommand{\R}{{\mathbb R}}
\newcommand{\B}{{\mathcal B}}

\newcommand{\Pp}{{\mathbb P}}

\newcommand{\lb}{{\lambda}}
\numberwithin{equation}{section}

\DeclareMathOperator{\aut}{Aut}

\DeclareMathOperator{\diag}{diag}

\DeclareMathOperator{\tr}{tr}
\DeclareMathOperator{\SP}{span}

\DeclareMathOperator{\GL}{GL}

\theoremstyle{plain}

\newtheorem{theorem}{Theorem}[section]
\newtheorem{lemma}[theorem]{Lemma}
\newtheorem{proposition}[theorem]{Proposition}
\newtheorem{corollary}[theorem]{Corollary}
\newtheorem{problem}[theorem]{Problem}
\newtheorem{example}[theorem]{Example}
\newtheorem{conjecture}[theorem]{Conjecture}
\newtheorem{question}[theorem]{Question}

\newtheorem{remark}[theorem]{Remark} 
\newtheorem{definition}[theorem]{Definition}

\begin{document}
\title[Cayley-Hamilton ideals and reducible representations]{Reducibility of linear representations, free ideals, and Kippenhahn's conjecture}
\author{Michael Stessin}
\address{Department of Mathematics and Statistics \\
University at Albany, SUNY \\
Albany, NY 12222}
\email{mstessin@albany.edu}
\author{Rongwei Yang$^1$} 
\address{Department of Mathematics and Statistics \\
 University at Albany, SUNY \\
Albany, NY 12222}
\email{ryang@albany.edu}

\begin{abstract}
The reducibility problem is one of the central themes in representation theory. For finitely generated groups or algebras, the issue is equivalent to determining whether the tuple of linear operators representing the generating set possesses a nontrivial common invariant subspace. The last two decades have seen a rich interplay between this problem and the geometry of the projective joint spectra of such tuples. This paper presents a comprehensive study of the connection for matrix tuples and finite dimensional group representations. Among other things, several key new concepts are introduced, including minimal polynomial, spectral index, spectral stability, and characteristic graph. Notably, this framework provides a complete resolution to Kippenhahn’s conjecture, settling a long-standing and influential problem in the theory of matrix tuples.
\end{abstract}
\keywords{matrix tuple, projective spectrum, joint characteristic polynomial, Cayley-Hamilton theorem, minimal polynomial, cyclicity, reducibility, algebraic extension, spectral index, characteristic graph.}
\subjclass[2010]
{Primary:  47A25, 47A13, 47A75, 47A15, 14J70. Secondary: 47A56, 47A67}
\footnote{The second-named author is supported in part by the Simons Foundation Grant No. MP-TSM-00002315, the Institute for Advanced Study Summer Collaboration Program, and the American Institute of Mathematics SQuaRE Program.}

\maketitle

\section{Introduction}

The reducibility of linear representations of groups or algebras is one of the central topics in representation theory. For finitely generated groups or algebras, this issue amounts to determining the existence of nontrivial common invariant subspaces for the operator tuples that represent the generating sets. In the last two decades, this question was intensely studied in the framework of multivariable operator theory and projective (joint) spectrum. When the representations are finite-dimensional, the tuples consist of matrices, and the projective spectra are algebraic varieties of the tuple's characteristic polynomials. An application of the classical Cayley-Hamilton theorem allows us to define some free ideals which contain a substantial amount of information about the representation. This approach also gives rise to the notion of minimal polynomial for matrix tuples, whose properties perfectly parallel those in the original one variable context. On the geometric side, this investigation gives rise to notions such as spectral index and spectral stability which are invariant under the similarity of matrix tuples.

This paper is devoted to an in-depth investigation of these topics. In the case of Hermitian tuples, which are naturally associated with unitary representations, our techniques led to a complete resolution of Kippenhahn's conjecture proposed in 1951. After a number of partial results have been established in the positive direction, a counterexample to the conjecture was found in 1983 for the general case. Theorem \ref{kippen} gives a necessary and sufficient condition for the validity of the conjecture.

The paper is structured as follows. Section \ref{characteristic pol} introduces the notion of joint characteristic polynomials for matrix tuples, motivated in part by the classical works of Dedekind and Frobenius. 

In Section \ref{free ideals}, we employ the Cayley–Hamilton theorem to construct free ideals associated with a matrix tuple. These ideals of group representations are examined in Section \ref{group ideals}. Section \ref{min} defines the minimal polynomial for matrix tuples, establishes its fundamental properties, and explores the relationship between a tuple's cyclicity, irreducibility, and the minimality of its characteristic polynomial. 

Section \ref{exten} is devoted to the study of tuple extensions and spectral indices. Using Burnside's theorem and Hilbert’s irreducibility theorem, this section establishes a necessary and sufficient condition for the irreducibility of matrix tuples in terms of their extended characteristic polynomials (Theorem 6.6).

Section \ref{local anal} provides the necessary background for addressing Kippenhahn's conjecture, including local spectral analysis and admissible transformations.

In Section \ref{Kippenhahn}, we state Kippenhahn's conjecture and review its history. The conjecture concerns the reducibility of Hermitian matrix pairs whose characteristic polynomials have repeated factors. We demonstrate how the conjecture, originally formulated for pairs, can be extended to tuples of arbitrary length by adding elements from the algebra generated by the tuple. Theorems \ref{kipp main} and \ref{repeated multiplicities not 1} here present important cases when Kippenhahn's conjecture holds.  

Section \ref{characteristic graph} introduces the notion of characteristic graphs and reveals its intimate relation to the reducibility of a tuple. Here, we present an algorithm, which we term the \textit{spectral reducibility test}, that determines whether a Hermitian matrix tuple is reducible in a bounded number of steps. This result yields necessary and sufficient conditions for a positive resolution of Kippenhahn's conjecture for tuples of arbitrary length (Theorem \ref{kippen}), thereby completely settling this long-standing problem.

Finally, in Section \ref{concluding}, we describe some possible directions for future developments.

\section{Characteristic polynomials}\label{characteristic pol}

Given a square matrix $A\in M_k(\C)$, its characteristic polynomial $Q_A(z)=\det (A-zI)$ is a degree-$k$ polynomial in one complex variable $z$, and it plays a fundamental role in the study of $A$. However, many problems in mathematics, science, and technology involve several matrices $A_1, ..., A_n$ of the same size. Whether there exists an appropriate notion of characteristic polynomial for such matrix tuples is a challenging problem, largely due to the vast number of possible candidates. Inspired by early work on group determinants \cite{Di75,Fr}, determinantal representations \cite{Di21,HV}, Kippenhahn's conjecture \cite{Ki,La,Law,LSS,Sh1}, and more recent research on projective spectra \cite{Ya09}, the following notion of characteristic polynomial for general matrices was introduced in \cite{CSZ}.

\begin{definition}
For matrices $A_1,\dots,A_n$ of the same size, their (joint) characteristic polynomial is defined as \[Q_A(z_0, z)=\det (z_0I+z_1A_1+\cdots +z_nA_n),\ \ \ z_0\in \C,\ z\in \C^n. \]
\end{definition}

This notion has been investigated intensively in recent years. For more details, we refer the reader to \cite{Ya24}. 

Related to the focus of this paper is the work by Dedekind and Frobenius in the late 1890s on group determinants. To be precise, given a finite group $G=\{1, g_1,\dots,g_n\}$, its group algebra $\C[G]$ is a $(n+1)$-dimensional complex vector space with basis $G$. Multiplication on the left by an element $g\in G$ on any vector $h\in \C[G]$ results a permutation of coefficients of $h$, and hence it gives rise to a unitary matrix in $M_{n+1}(\C)$. In current-day terminology, this is the left regular representation of the group $G$, which we shall denote by $\lambda_G$ (or simply $\lambda$). Dedekind studied the determinant 
\[Q_{\lambda}(z_0,z):=\det (z_0I+z_1\lambda(g_1)+\cdots +z_n\lambda(g_n))\]
for some groups and found it to have quite interesting factorizations. Likewise, for a general finite dimensional representations $\pi$, one may define the corresponding polynomial $Q_{\pi}(z)$. Later in 1896 Frobenius proved the following theorem \cite{De, Fr}. To express the theorem in current terminology, we let $\hat{G}$ denote the set of equivalence classes of irreducible unitary representations of $G$ (called the unitary dual of $G$). Moreover, we let $d_\pi$ denote the dimension of representation $\pi$.
\begin{theorem}\label{Frob} 
If $G=\{1, g_1,\dots,g_n\}$ is a finite group, then
\[Q_{\lambda}(z_0,z)=\prod_{[\pi]\in \hat{G}}(Q_{\pi}(z_0,z))^{d_\pi}.\]
 Moreover, each $Q_\pi$ is an irreducible polynomial, and it determines $\pi$ up to unitary equivalence.
\end{theorem}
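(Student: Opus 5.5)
The plan is to prove the three assertions in turn: the factorization, the irreducibility of each $Q_\pi$, and the fact that $Q_\pi$ determines $\pi$.

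\textbf{Factorization.} We write $\lambda$ as a direct sum of irreducibles. By Maschke's theorem and the standard character computation $\langle \chi_\lambda,\chi_\pi\rangle = d_\pi$, we have
\[\lambda\cong \bigoplus_{[\pi]\in\hat G}\pi^{\oplus d_\pi}.\]
So there is an invertible $S$ with $S^{-1}\lambda(g)S$ block diagonal for every $g$, with each block $\pi(g)$ repeated $d_\pi$ times. Since $z_0I+\sum_i z_i\lambda(g_i)$ is linear in the $\lambda(g_i)$, conjugating by $S$ block-diagonalizes it. Its determinant is therefore the product of the block determinants, which gives the displayed formula.

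\textbf{Irreducibility.} Let $\pi$ be irreducible of dimension $d=d_\pi$. By Burnside's theorem the matrices $\pi(g)$, $g\in G$, span $M_d(\C)$; this uses that $1=g_0$ is included as the coefficient of $z_0$. Choose a subset of $G$ whose images form a basis of $M_d(\C)$. The linear map $(z_0,z)\mapsto z_0I+\sum_i z_i\pi(g_i)$ from $\C^{n+1}$ to $M_d(\C)$ is then surjective. After a linear change of coordinates it is a coordinate projection onto the $d^2$ entries $x_{jk}$ of a generic matrix. Under this change, $Q_\pi$ becomes the generic determinant $\det(x_{jk})$, regarded as a polynomial in more variables that do not appear. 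The generic determinant is irreducible by the classical argument: it is linear in each $x_{jk}$, so any factor must involve whole rows and whole columns, which forces one factor to be constant. Irreducibility is preserved by invertible linear substitutions and by adjoining dummy variables, so $Q_\pi$ is irreducible.

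\textbf{Determination of $\pi$.} We recover the character of $\pi$ from $Q_\pi$. Setting $z_0=1$ and expanding $\log\det(I+\sum_i z_i\pi(g_i))$ near $0$, the linear term is
\[\sum_i z_i\,\tr\pi(g_i)=\sum_i z_i\chi_\pi(g_i).\]
Also $\chi_\pi(1)=d_\pi=\deg Q_\pi$. Hence $Q_\pi$ determines $\chi_\pi$. Characters determine representations up to equivalence, and equivalent unitary representations are unitarily equivalent by the polar decomposition argument, so $Q_\pi$ determines $\pi$ up to unitary equivalence.

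The main obstacle I expect is the irreducibility step. Care is needed that the reduction to the generic determinant is legitimate, in particular that surjectivity of the linear map really turns $Q_\pi$ into $\det$ composed with a linear projection. The classical proof that the generic determinant is irreducible must also be carried out cleanly. The other two steps are routine character theory.
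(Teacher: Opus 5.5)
Your proof is correct; the paper does not prove this theorem itself but quotes it from Dedekind and Frobenius. Your factorization and irreducibility steps are the same reasoning the paper uses elsewhere: the decomposition $\lambda\cong\bigoplus_{[\pi]}\pi^{d_\pi}$ in the proof of Proposition~\ref{minimal}, and Burnside's theorem plus irreducibility of the generic determinant under a linear change of variables in Section~\ref{exten}. Your recovery of $\chi_\pi$ from $\deg Q_\pi$ and the $z_0^{d_\pi-1}$ coefficient $q_1(z)=\sum_j z_j\tr\pi(g_j)$ covers the uniqueness claim, which the paper never argues.
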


The polynomial $Q_{\lambda}$ was later called the group determinant of $G$, and its study by Dedekind and Frobenius is indeed the starting point of group representation theory. For more information on this development, we refer the reader to \cite{Cu,Cu2, Di21, Di75, FS}. However, this line of study has not been generalized to other algebraic settings. In 2009, the notion of projective spectrum was introduced in \cite{Ya09} for general elements $A_1,\dots,A_n$ in a Banach algebra in terms of the linear pencil $A_*(z)=z_1A_1+\cdots +z_nA_n$. Some follow-up work can be found in \cite{BCY,CY,CSZ,DY,GOY,GY,HWY,MQW17,S,S4,S3,SYZ}. Recent work related to characteristic polynomial of groups can be found in \cite{CST, HY18}.

\section{Free Algebras and Ideals}\label{free ideals}

Let $F_n$ be the free group with generating set $S=\{x_1, ..., x_n\}$. An element $g\in F_n$ is of the form $g=g_1\cdots g_m$, where either $g_i\in S$ or $g_i^{-1}\in S$. This form is said to be reduced if there is no such representation $g=g'_1\cdots g'_{m'}$ with $m'<m$. In this case, we say that the length $l(g)$ of $g$ is equal to $m$. The {\em free algebra}, denoted by ${\mathcal B}_n$ (or simply $\B$), is the complex semigroup algebra generated by $S$ and hence a subalgebra in the group algebra $\C[F_n]$. An element $h\in \B$ is thus a polynomial in $x_i$s with degree equal to the length of the longest word present in $h$. Such $h$ will be called {\em free polynomials}, and ideals in $\B$ will be called {\em free ideals}. Given a tuple $A=(A_1, ..., A_n)$ of bounded linear operators on a Banach space $X$, the evaluation $h(A)$ is a substitution of each $x_i$ by $A_i$ in the expression of $h$. For example, if $h(x)=2x_1x_2-x_2x_1+5x_2x_3^2x_4$, then its degree is $4$, and $h(A)=2A_1A_2-A_2A_1+5A_2A_3^2A_4$. 
\begin{definition}\label{annihi}
Given a tuple $A$ of linear operators on $X$, 

a) a free polynomial $h$ is said to {\em annihilate} $A$ if $h(A)=0$;

b) the ideal $J_A:=\{h\in \B: h(A)=0\}$ is called the {\em annihilating ideal} for $A$.
\end{definition}
It is easy to see that the evaluation map $\rho: h\to h(A)$ is an algebra homomorphism from $\B$ into $L(X)$-the algebra of all bounded linear operators on $X$. Therefore, we have $J_A=\ker \rho$ which is a two-sided ideal in $\B$.

In the case $n=1$, the algebra $\B$ is the polynomial ring $\C[x]$, and $A$ is a single square matrix. It is well-known that $J_A$ is the principle ideal generated by the minimal polynomial for $A$. In particular, the characteristic polynomial $p_A(x)=\det (x-A)$ belongs to $J_A$ due to the Cayley-Hamilton theorem. We will denote the principle ideal $(p_A)$ by $J_A^{\scalebox{0.5}{CH}}$. In infinite dimension, a single linear operator $T\in L(X)$ is said to be {\em algebraic} if there exists a polynomial $p\in \C[x]$ such that $p(T)=0$, or equivalently, the ideal $J_T$ is nontrivial.  In the sequel, we will make analogous definitions of such ideals for matrix tuples.

\subsection{Cayley-Hamilton Ideals}

 It is meaningful to start the study with a simple example, as it sheds light on subsequent discussions.

\begin{example}\label{Pmatrices} \normalfont
The $3$-dimensional simple Lie algebra $\frak{su}_2$ plays an important role in mathematics and physics. It is spanned by the {\em Pauli matrices}
\[\sigma_1=\left(\begin{matrix}
0 & 1\\
1 & 0
\end{matrix}\right),\ \sigma_2=\left(\begin{matrix}
0 & -i\\
i & 0
\end{matrix}\right),\ \sigma_3=\left(\begin{matrix}
1 & 0 \\
0 & -1
\end{matrix}\right),\]
which satisfy the commutation relations:
\begin{equation}\label{Pcom}
 [\sigma_1,\sigma_2]=2i\sigma_3,\  [\sigma_2,\sigma_3]=2i\sigma_1,\  [\sigma_3,\sigma_1]=2i\sigma_2.
 \end{equation}
Setting $\sigma_* (z)=z_1\sigma_1+z_2\sigma_2+ z_3\sigma_3$, the characteristic polynomial \[Q_\sigma(z_0,z)=\det (z_0I+\sigma_*(z))=z_0^2-z_1^2-z_2^2-z_3^2.\] For each $z\in \C^3$, the pencil $\sigma_* (z)$ is a single matrix. Cayley-Hamilton theorem then implies that
\begin{align*}
0=Q(-\sigma_*(z), z)&= \sigma_*^2(z)-(z_1^2+z_2^2+z_3^2)I\\
&=\sum_{i=1}^3z_i^2(\sigma_i^2-I)+\sum_{i\neq j}^3z_iz_j(\sigma_i\sigma_j+\sigma_j\sigma_i),\ \ \ z\in \C^3,
\end{align*}
and consequently,
\begin{equation}\label{Pauli}
\sigma_i^2=I,\ \ \text{and}\ \ \sigma_i\sigma_j+\sigma_j\sigma_i=0, \ \ \ 1\leq i, j\leq 3,\ i\neq j.
\end{equation}
\end{example}

\begin{remark} \normalfont Some observations are noteworthy.

{\bf 1}. Equations (\ref{Pauli}) define the so-called Pauli algebra 
\[G_3:=\SP\{1, \sigma_i, \sigma_i\sigma_j, \sigma_1\sigma_2\sigma_3: 1\leq i, j\leq 3,\ i\neq j\},\]
which is traditionally known as the Clifford algebra $Cl(3,0)$. 

{\bf 2}. What is probably more remarkable about Example \ref{Pmatrices} is this fact: if $A_1, A_2$, and $A_3$ are any $2\times 2$ matrices whose joint characteristic polynomial is $Q_A(z_0,z)=z_0^2-z_1^2-z_2^2-z_3^2$, then they must satisfy the equations (\ref{Pauli}). Hence the Pauli algebra is inherent to the quadratic form $Q_\sigma$ which defines the Minkowski metric. 

{\bf 3}. Given any homogeneous polynomial $Q(z), z\in \C^n$, the group \[\aut (Q):=\{M\in \GL(n,\C): Q(zM)=Q(z)\}\] is called the {\em invariant group} of $Q$. The Lorentz group of spacetime is the subgroup in $\aut (Q_\sigma)$ consisting of real matrices.
\end{remark}

Throughout this paper, unless specified otherwise, we assume $n\geq 2$ and $A=(A_1, ..., A_n)$ stands for a tuple of $k\times k$ matrices. Then, we can expand $Q_A(z_0, z)$ as a polynomial in $z_0$, namely, 
\begin{equation}\label{expandQ}
    Q_A(z_0, z)=z_0^k+q_1(z)z_0^{k-1}+\cdots + q_k(z),
    \end{equation}
    where each $q_j$ is a homogeneous polynomial of degree $j$, and it manifests certain properties of the matrices. For instance, it is not hard to check \cite{Ya24} that
\begin{align}
 q_1(z)&=\sum_{j=1}^n z_j\tr A_j,\\
 q_2(z)&=\frac{1}{2}\sum_{i,j=1}^n z_iz_j\left(\tr A_i \tr A_j-\tr (A_iA_j)\right),
\end{align}where $\tr$ is the trace. If we substitute $z_0$ by $-A_*(z)$, then we can write 
\begin{align}
Q_A(-A_*(z), z)&=(-A_*(z))^k+q_1(z)(-A_*(z))^{k-1}+\cdots + q_n(z)I \nonumber \\
&=:\sum_{|\tau|=k}z^{\tau}p_\tau(A),\label{Qexpan}
\end{align} where $\tau=(\tau_1, ..., \tau_n)$ is the multi-index with $0\leq \tau_i\leq k$, $|\tau|=\tau_1+\cdots +\tau_n$,  $z^\tau=z_1^{\tau_1}\cdots z_n^{\tau_n}$, and $p_\tau$ in the free algebra $\B$ for each $\tau$. Cayley-Hamilton theorem asserts that $Q_A(-A_*(z), z)=0$ for all $z\in \C^{n}$, and thus equation (\ref{Qexpan}) leads to the following multivariable version of the theorem.

\begin{theorem}\label{C-H}
For any tuple $A$, we have $p_\tau (A)=0$ for each $\tau$ with $|\tau|=k$.
\end{theorem}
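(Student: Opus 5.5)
The plan is to derive the statement from the classical one-variable Cayley–Hamilton theorem, applied pointwise in $z$, and then compare coefficients of a polynomial identity in $z$.

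\textbf{Step 1: Cayley–Hamilton at each point.} Fix $z\in\C^n$ and set $M=A_*(z)$, a single $k\times k$ matrix. From (\ref{expandQ}), the one-variable characteristic polynomial of $-M$ is
\[\det(wI-(-M))=\det(wI+M)=Q_A(w,z)=w^k+q_1(z)w^{k-1}+\cdots+q_k(z).\]
The classical Cayley–Hamilton theorem for the matrix $-M$ therefore gives
\[(-M)^k+q_1(z)(-M)^{k-1}+\cdots+q_k(z)I=0.\]
By (\ref{Qexpan}), this says $Q_A(-A_*(z),z)=0$ for every $z\in\C^n$.

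\textbf{Step 2: The left side is a homogeneous matrix polynomial.} Each term $q_j(z)(-A_*(z))^{k-j}$ is a matrix-valued polynomial in $z$. Its entries are homogeneous of degree $j+(k-j)=k$, since $q_j$ is homogeneous of degree $j$ and $A_*(z)$ is linear in $z$. Expanding $(-A_*(z))^{k-j}$ as a sum over words in the $A_i$ and collecting monomials $z^\tau$ gives exactly the expression $\sum_{|\tau|=k}z^\tau p_\tau(A)$. Here each $p_\tau$ is a free polynomial whose coefficients depend only on $A$ through the coefficients of the $q_j$. It is defined so that its evaluation at $A$ is the matrix coefficient of $z^\tau$.

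\textbf{Step 3: Compare coefficients.} For each fixed pair of indices $(r,s)$, the $(r,s)$ entry of $\sum_{\tau} z^\tau p_\tau(A)$ is a scalar polynomial in $z$ that vanishes identically on $\C^n$. A polynomial over $\C$ that vanishes at every point of $\C^n$ is the zero polynomial, by induction on $n$ using the fact that a nonzero one-variable polynomial has finitely many roots. Hence every coefficient is zero, that is, $(p_\tau(A))_{rs}=0$ for all $\tau$, $r$, $s$. This gives $p_\tau(A)=0$ for each $|\tau|=k$.

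\textbf{Main obstacle.} Nothing here is deep. The only point needing care is Step 2: one must make sure that $p_\tau(A)$ really is the full coefficient of $z^\tau$, so that no monomial is split between two terms. Homogeneity guarantees this, because only multi-indices with $|\tau|=k$ occur. One must also note that the scalar coefficients of $q_j$, which come from $A$, are treated as constants of the free polynomial, so the identity is genuinely polynomial in $z$.
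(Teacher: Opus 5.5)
Your proposal is correct and is essentially the paper's own argument: the paper also applies the classical Cayley--Hamilton theorem to the single matrix $-A_*(z)$ for each fixed $z\in\C^n$ to get $Q_A(-A_*(z),z)=0$, and then reads off $p_\tau(A)=0$ from the expansion (\ref{Qexpan}). The one difference is that you spell out the coefficient-comparison step (a polynomial vanishing identically on $\C^n$ is the zero polynomial), which the paper leaves implicit.
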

The following definition is immediate.

\begin{definition}
The {\em Cayley-Hamilton ideal} of tuple $A$, denoted by $J_A^{\scalebox{0.5}{CH}}$, is the two-sided ideal in $\B$ generated by the free polynomials $p_\tau, |\tau|=k$.
\end{definition}
\noindent It is clear that $J_A^{\scalebox{0.5}{CH}}\subseteq J_A$. Writing $J_A^{\scalebox{0.5}{CH}}:=\left(p_\tau: |\tau|=k\right)$, for Example \ref{Pmatrices}, we have 
\[J_\sigma^{\scalebox{0.5}{CH}}=\left(x_i^2-1, x_ix_j+x_jx_i : 1\leq i, j\leq 3,\ i\neq j\right).\]Observe that due to (\ref{Pcom}) the free polynomial $x_1x_2-x_2x_1-2ix_3$ is in $J_\sigma$, but it is not in $J_\sigma^{\scalebox{0.5}{CH}}$. The following is a simple consequence of Theorem \ref{C-H}.
\begin{corollary}\label{annih}
For every tuple $A$ of $k\times k$ matrices, the ideal $J_A$ contains a nontrivial free polynomial in two or more variables with degree less than or equal to $k$.
\end{corollary}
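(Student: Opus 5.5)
The plan is to take a suitable $p_\tau$ from Theorem \ref{C-H} and check that it is a nonzero free polynomial involving at least two of the variables. Every $p_\tau$ with $|\tau|=k$ lies in $J_A$ and has degree at most $k$. It remains to pick $\tau$ for which $p_\tau$ is nontrivial and genuinely involves two or more variables.

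Since $n\geq 2$, I take the multi-index $\tau=(k-1,1,0,\dots,0)$. I compute the coefficient of $z^\tau = z_1^{k-1}z_2$ in the expansion (\ref{Qexpan}). The leading term $(-A_*(z))^k=(-1)^k(z_1A_1+\cdots+z_nA_n)^k$ contributes
\[(-1)^k\sum_{j=0}^{k-1} A_1^{j}A_2A_1^{k-1-j}.\]
Expressed in the free algebra, this is the degree-$k$ part $(-1)^k\sum_{j=0}^{k-1} x_1^{j}x_2x_1^{k-1-j}$.

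The lower terms $q_m(z)(-A_*(z))^{k-m}$ contribute only words of length $k-m<k$, with scalar coefficients coming from the $q_m$. Hence the top-degree homogeneous component of $p_\tau$ is exactly $(-1)^k\sum_{j} x_1^{j}x_2x_1^{k-1-j}$. This is a nonzero element of $\B$, because the words $x_1^jx_2x_1^{k-1-j}$ are distinct free monomials. So $p_\tau\neq 0$, its degree is exactly $k$, and it contains the variables $x_1$ and $x_2$. Since $p_\tau(A)=0$ by Theorem \ref{C-H}, $p_\tau\in J_A$, which proves the corollary.

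The only point needing care is the claim that the scalar lower-order terms cannot cancel the top-degree part. This follows because the free algebra is graded by word length: cancellation could only occur among words of the same length, and only the $m=0$ term produces words of length $k$. If one wants to exhibit the simplest example, the case $k=1$ gives $p_\tau=-x_2+\tr(A_2)$. For $k=1$ one should instead choose $\tau=(1,0,\dots)$ and $(0,1,\dots)$ and form a combination. More simply, for $k=1$ every $A_i$ is a scalar $a_i$, and $x_1x_2-x_2x_1\in J_A$ (or $a_2x_1-a_1x_2$) is a nonzero polynomial in two variables of degree $\le 2$. So for $k=1$ I would treat this degenerate case separately, or note that the intended setting is $k\geq 2$.
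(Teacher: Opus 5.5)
Your argument is correct and is the intended one: the paper states the corollary as an immediate consequence of Theorem \ref{C-H} without further detail. Your choice $\tau=(k-1,1,0,\dots,0)$, together with the word-length grading showing that the top component $(-1)^k\sum_{j}x_1^{j}x_2x_1^{k-1-j}$ of $p_\tau$ survives, supplies exactly the nontriviality check the paper leaves implicit.

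One small repair is needed in your $k=1$ aside. Your ``more simply'' options fail there: $x_1x_2-x_2x_1$ has degree $2>k$, and $a_2x_1-a_1x_2$ vanishes when $a_1=a_2=0$. Keep your first suggestion instead, namely $p_{(1,0,\dots,0)}+p_{(0,1,0,\dots,0)}=a_1+a_2-x_1-x_2$, which is nonzero, involves two variables, and has degree $1$.
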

It is worth noting that, if not for the notion of joint characteristic polynomial, Corollary \ref{annih} is not at all obvious. For any positive integer $m$, we define the free polynomial 
\[S_m(x_1, ..., x_m)=\Sigma_{\sigma\in S(m)}\text{sgn}(\sigma)x_{\sigma(1)}\cdots x_{\sigma(m)},\]where $S(m)$ stands for the group of permutations on the set $\{1, ..., m\}$.
Amitsur–Levitzki theorem \cite{AL} states that for any abelian ring $R$ we have 
\[S_{2k}(A_1, ..., A_{2k})=0,\ \ \ A_j\in M_k(R), \ j=1, ..., 2k.\] Moreover, there is no free polynomial of degree less than $2k$ that annihilate all matrix $2k$-tuples in $M_k(R)$. In Theorem \ref{C-H}, while the polynomials $p_\tau$ are specific to the tuple $A$, they are all of degrees less than or equal to $k$.

The Cayley-Hamilton ideal $J_A^{\scalebox{0.5}{CH}}$ can coincide with the ideal $J_A$ for some pairs $A=(A_1,A_2)$. 

\begin{example}\normalfont Let 
$$A_1=\left [ \begin{array}{cc} 1&0 \\0& -1 \end{array} \right ], \ A_2 =\left [ \begin{array}{cc} 0&1 \\1&0\end{array}\right ]$$
be the Coxeter generators of an irreducible representation of the dihedral group \[D_4=\langle g_1, g_2\mid g_1^2=g_2^2=(g_1g_2)^4=1\rangle.\] Computations like that in Example \ref{Pmatrices} verify that the Cayley-Hamilton ideal $J_A^{\scalebox{0.5}{CH}}=(x_1^2-1 ; \ x_2^2-1; \ x_1x_2+x_2x_1)$. It is not hard to check that every element in the quotient $J_A/J_A^{\scalebox{0.5}{CH}}$ is of the form $\alpha [x_1x_2]+\beta [x_1]+\gamma [x_2] +\delta $
for some $\alpha, \beta, \gamma, \delta \in \C$, and it satisfies the equation
$$\alpha  A_1A_2 +\beta A_1+ \gamma A_2 +\delta  I=0. $$
The second and the fourth term of the above equation are diagonal matrices, while the first and the third are off-diagonal. Thus,
$$\alpha A_1A_2 +\gamma A_2=0, \ \ \beta A_1 + \delta I=0, $$ 
which implies $\alpha=\beta=\gamma=\delta=0$.
\end{example}

\section{Free Ideals of Group Representations}\label{group ideals}

Consider a group $G$ with a finite generating set $S=\{g_1, ..., g_n\}$. For the remaining part of this paper, we assume $S$ is symmetric in the sense that $g\in S$ implies $g^{-1}\in S$. Any relation among the elements in $S$ can be described by a free polynomial $h\in \B$. For example, the relation $(g_1g_2)^4=1$ corresponds to $h(x)=(x_1x_2)^4-1$. For convenience, we set $\bar{g}=(g_1, ..., g_n)$, and given a representation $\pi$ of $G$, write $\pi(\bar{g}):=(\pi(g_1),..., \pi (g_n))$.

\begin{definition}
Given $G$ above, define the free ideal $J_{\scalebox{0.6}{G}}=\{h\in \B: h(\bar{g})=0\}$.
\end{definition}
\noindent Note that the dependence of $J_{\scalebox{0.6}{G}}$ on $S$ is not the concern of this paper. It is clear that the quotient algebra $\B/J_{\scalebox{0.6}{G}}$ is isomorphic to the group algebra $\C[G]$. Thus the algebraic structure of $G$ is uniquely determined by $J_{\scalebox{0.6}{G}}$.

We let $GL(X)$ denote the group of nonsingular operators on a Banach space $X$ and suppose $\pi: G\to GL(X)$ is a representation of $G$.
\begin{definition}
Given $G$ and $\pi$ above, we define the free ideal \[J_{\scalebox{0.6}{G},\pi}=\{h\in \B: h(\pi(\bar{g}))=0\}.\] 
In the case $\pi$ is finite dimensional, we denote the Cayley-Hamilton ideal of $\pi(\bar{g})$ by $J_{\scalebox{0.6}{G},\pi}^{\scalebox{0.6}{CH}}$.
\end{definition}
It is clear that $\pi (h(\bar{g}))=h(\pi(\bar{g}))$ for every free polynomial $h$. Thus, if $\pi$ is faithful, then $h(\pi(\bar{g}))=0$ if and only if $h(\bar{g})=0$. On the other hand, if $\pi$ is not faithful, then there exists a group element $g\neq 1$ such that $\pi(g)=I$.  Let $h$ be a free monomial such that $h(\bar{g})=g$. Then 
\[h(\pi(\bar{g}))=\pi(h(\bar{g}))=\pi (g)=I,\]which shows that $h-1$ is in $J_{\scalebox{0.6}{G},\pi}$ but not in $J_{\scalebox{0.6}{G}}$. The following proposition summarizes this observation.
\begin{proposition}
Given any $G$ and $\pi$ as above, the following hold:

a) $J_{\scalebox{0.6}{G}}\subseteq J_{\scalebox{0.6}{G},\pi}$;

b) $J_{\scalebox{0.6}{G}}=J_{\scalebox{0.6}{G},\pi}$ if and only if $\pi$ is faithful.
\end{proposition}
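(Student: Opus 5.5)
The plan is to work throughout with the linear extension of $\pi$ to the group algebra. Since $\B/J_{\scalebox{0.6}{G}}\cong\C[G]$, every free polynomial $h$ determines an element $h(\bar g)\in\C[G]$. Extending $\pi$ linearly gives an algebra homomorphism $\tilde\pi:\C[G]\to L(X)$. Both $h\mapsto h(\bar g)$ and $h\mapsto h(\pi(\bar g))$ are algebra homomorphisms out of $\B$, and they agree on the generators $x_i$ after composing the first with $\tilde\pi$. The first step is therefore to record the identity
\[
h(\pi(\bar g))=\tilde\pi\bigl(h(\bar g)\bigr),\qquad h\in\B,
\]
which is the precise meaning of "$\pi(h(\bar g))=h(\pi(\bar g))$" used just before the proposition.

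For a), if $h\in J_{\scalebox{0.6}{G}}$, then $h(\bar g)=0$ in $\C[G]$. Hence $h(\pi(\bar g))=\tilde\pi(0)=0$, so $h\in J_{\scalebox{0.6}{G},\pi}$.

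For b), the "only if" direction is the argument already given in the text, which I would formalize as follows. Suppose $\pi$ is not faithful, and pick $g\neq 1$ with $\pi(g)=I$. Because $S$ generates $G$ and is symmetric, $g$ is a product of elements of $S$, so there is a free monomial $m$ with $m(\bar g)=g$. Then $h=m-1$ satisfies $h(\pi(\bar g))=\pi(g)-I=0$. On the other hand, $h(\bar g)=g-1\neq 0$ in $\C[G]$, because $g$ and $1$ are distinct basis elements. Thus $h\in J_{\scalebox{0.6}{G},\pi}\setminus J_{\scalebox{0.6}{G}}$. Symmetry of $S$ is exactly what is needed here: it makes every group element, including inverses, a positive word, so that it is reachable by the semigroup algebra $\B$.

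For the "if" direction, the identity above gives $J_{\scalebox{0.6}{G},\pi}=\{h: h(\bar g)\in\ker\tilde\pi\}$. Hence $J_{\scalebox{0.6}{G},\pi}=J_{\scalebox{0.6}{G}}$ exactly when $\ker\tilde\pi=0$, using surjectivity of $\B\to\C[G]$ to see that every element of $\ker\tilde\pi$ arises as some $h(\bar g)$. I expect this to be the main obstacle. Injectivity of $\pi$ on $G$ alone does not force injectivity of $\tilde\pi$ on $\C[G]$: a one-dimensional faithful character of $\mathbb Z/3$ kills $1+g+g^2$. So the step only goes through if "faithful" is read, as the paper's sentence "if $\pi$ is faithful, then $h(\pi(\bar g))=0$ if and only if $h(\bar g)=0$" implicitly does, as faithfulness of the induced algebra representation $\tilde\pi$. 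Under that reading the conclusion follows immediately from the displayed identity. I would state this convention explicitly at this point so that b) is proved exactly as worded.
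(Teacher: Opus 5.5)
Your argument for a) and for the ``only if'' half of b) is the paper's own: the paper's proof is the paragraph just before the proposition, which uses the identity $\pi(h(\bar g))=h(\pi(\bar g))$ (your $h(\pi(\bar g))=\tilde\pi(h(\bar g))$) and the witness $m-1$ for a nontrivial $g$ with $\pi(g)=I$.

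Your objection to the ``if'' half is correct, and the defect is in the paper, not in your proposal. The paper justifies that direction with a single sentence: ``if $\pi$ is faithful, then $h(\pi(\bar g))=0$ if and only if $h(\bar g)=0$.'' That sentence tacitly requires $\tilde\pi$ to be injective on $\C[G]$. Yet the paper's own ``not faithful'' clause (``there exists a group element $g\neq 1$ such that $\pi(g)=I$'') fixes faithfulness as injectivity on $G$. Your example settles the matter. Take $G=\mathbb{Z}/3$, $S=\{g,g^{-1}\}$, $\pi(g)=\omega=e^{2\pi i/3}$ and $h=1+x_1+x_2$. Then $h(\bar g)=1+g+g^2\neq 0$ in $\C[G]$, but $h(\pi(\bar g))=1+\omega+\omega^2=0$. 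So $J_G\subsetneq J_{G,\pi}$ even though $\pi$ is faithful.

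For that reason, do not present your repair as a reading convention that the paper ``implicitly'' uses. It is a correction of the statement, and under it b) becomes a tautology, since surjectivity of $\B\to\C[G]$ does all the work. State the correction outright. $J_G=J_{G,\pi}$ if and only if $\tilde\pi$ is injective on $\C[G]$. Faithfulness on $G$ is necessary, which is exactly what your $m-1$ argument proves, but it is not sufficient.

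\begin{itemize}
\item For finite $G$, $\ker\tilde\pi$ is a two-sided ideal of $\C[G]\cong\bigoplus_{[\rho]\in\hat G}M_{d_\rho}(\C)$. So injectivity holds exactly when every irreducible representation of $G$ occurs in $\pi$.
\item For infinite $G$, no finite-dimensional $\pi$ qualifies, because $\C[G]$ is infinite-dimensional.
\end{itemize}

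The paper's next remark, $J_{G,\lambda}=J_G$ for the left regular representation, is still true, but for this reason rather than faithfulness. We have $\tilde\lambda\bigl(\sum_g a_g g\bigr)\delta_1=\sum_g a_g\delta_g$ in $\ell^2(G)$, so $\tilde\lambda$ is injective.
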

\noindent In particular, if $X=\ell^2(G)$ and $\lambda$ is the left regular representation of $G$, then $\lb$ is faithful and therefore $J_{\scalebox{0.6}{G},\lb}=J_{\scalebox{0.6}{G}}$. The following facts are not hard to check.
\begin{proposition}
Let  $\pi$ and $\rho$ be unitary representations of $G$.

a) If $\pi\cong \rho$, then $J_{\scalebox{0.6}{G},\pi}=J_{\scalebox{0.6}{G},\rho}$.

b) If $\pi$ is a subrepresentation of $\rho$, then $J_{\scalebox{0.6}{G},\rho}\subseteq J_{\scalebox{0.6}{G},\pi}$.
\end{proposition}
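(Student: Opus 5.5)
Both parts come down to one identity: for any representation $\pi$ and any free polynomial $h\in\B$, we have $h(\pi(\bar g))=\pi(h(\bar g))$. Here $\pi$ is extended linearly to the group algebra $\C[G]$, and $h(\bar g)\in\C[G]$. This identity holds because $\pi$ is multiplicative and linear, and a free polynomial evaluation is built from sums of products of the generators. So the plan is to translate each statement about the ideals into a statement about the operators $\pi(g_i)$ and $\rho(g_i)$.

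For part (a), suppose $\pi\cong\rho$. Then there is a bounded invertible (unitary) intertwiner $U:X_\pi\to X_\rho$ with $U\pi(g)U^{-1}=\rho(g)$ for all $g\in G$. Since conjugation by $U$ is an algebra automorphism, we get
\[
h(\rho(\bar g))=h\bigl(U\pi(g_1)U^{-1},\dots,U\pi(g_n)U^{-1}\bigr)=U\,h(\pi(\bar g))\,U^{-1}.
\]
Here we check the middle equality on monomials, where the inner factors $U^{-1}U$ cancel, and then extend it by linearity. Hence $h(\rho(\bar g))=0$ if and only if $h(\pi(\bar g))=0$, which gives $J_{G,\pi}=J_{G,\rho}$.

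For part (b), suppose $\pi$ is a subrepresentation of $\rho$. This means there is a closed subspace $M\subseteq X_\rho$, invariant under every $\rho(g)$, such that $\pi$ is unitarily equivalent to the restriction $g\mapsto\rho(g)|_M$. By part (a), we may assume that $\pi(g)=\rho(g)|_M$. Invariance of $M$ gives $\bigl(\rho(g_{i_1})\cdots\rho(g_{i_m})\bigr)|_M=\pi(g_{i_1})\cdots\pi(g_{i_m})$, again checked on monomials first. Extending linearly, we get $h(\pi(\bar g))=h(\rho(\bar g))|_M$. Therefore, if $h\in J_{G,\rho}$, then $h(\pi(\bar g))=0$, so $h\in J_{G,\pi}$.

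Neither part contains a real obstacle. The only point needing care is the compatibility of free polynomial evaluation with restriction to an invariant subspace and with conjugation. Both reduce to the monomial case, and that case is immediate.
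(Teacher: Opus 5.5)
Your proof is correct: conjugation by the unitary intertwiner gives (a), and restriction to the invariant subspace gives (b), both checked on monomials (including the empty word, which evaluates to the identity). The paper gives no proof, only calling these facts ``not hard to check'' right after noting $\pi(h(\bar{g}))=h(\pi(\bar{g}))$, so your argument is exactly the routine verification it leaves to the reader.
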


If $\pi$ is unitarily equivalent to a subrepresentation of $\rho$, we say that $\pi$ is contained in $\rho$. The notion of weak containment is subtler. Consider two unitary representations $\pi$ and $\rho$ of $G$ in Hilbert spaces ${\mathcal H}$ and ${\mathcal K}$, respectively.

\begin{definition} We say that $\pi$ is weakly contained in $\rho$ (denoted by $\pi\prec \rho$) if for every $u\in {\mathcal H}$, every finite subset $Q\subset G$, and every $\epsilon>0$, there exist $v_1, \cdots, v_m$ in ${\mathcal K}$ such that 
\[|\langle \pi(g)u,\ u\rangle-\sum_{i=1}^{m}\langle \rho(g)v_i,\ v_i\rangle|<\epsilon,\ \ \ \forall g\in Q.\]
\end{definition}
\noindent Furthermore, $\pi$ and $\rho$ are said to be {\em weakly equivalent} if $\pi\prec \rho$ and $\rho\prec \pi$. In this case, we write $\pi\sim \rho$. For more information about weak containment, we refer the reader to \cite{BHV,Ya24}.

For any free polynomial $h\in \B$ of degree $N$, using multi-indices we can write $h(x)=\sum_{|\tau|\leq N}c_\tau x_\tau$. Define formally \[h^*(x)=\sum_{|\tau|\leq N}\overline{c_\tau} (x_\tau)^{-1}.\] Note that $h^*(x)$ is an element in the free group algebra $\C[F_n]$ but not necessarily an element in the free algebra $\B$. Nevertheless, we still have $h^*(\bar{g})\in \C[G]$. If $\pi$ is a unitary representation of a group $G$, then $\pi (h^*(\bar{g}))=(\pi (h(\bar{g}))^*$.
\begin{proposition}
Let $(\pi, {\mathcal H})$ and $(\rho, \mathcal{K})$ be two unitary representations of $G$. If $\pi\prec \rho$, then $J_{G,\rho}\subseteq J_{G,\pi}$.
\end{proposition}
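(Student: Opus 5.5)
Let $h\in J_{G,\rho}$, so $\rho(h(\bar g))=h(\rho(\bar g))=0$. Set $a:=h(\bar g)\in\C[G]$ and $b:=h^*(\bar g)\,h(\bar g)\in \C[G]$. Then $b$ is a finite linear combination $b=\sum_{g\in Q}c_g\,g$ over some finite set $Q\subset G$. Since $\rho$ is unitary, the remark preceding the statement gives
\[\rho(b)=\rho(h^*(\bar g))\rho(h(\bar g))=(\rho(h(\bar g)))^*\rho(h(\bar g))=0.\]
Consequently $\langle \rho(b)v,v\rangle=\sum_{g\in Q}c_g\langle\rho(g)v,v\rangle=0$ for every $v\in\mathcal K$. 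The aim is to show that $\pi(h(\bar g))=0$, and for this it suffices to prove $\|\pi(a)u\|^2=\langle\pi(b)u,u\rangle=0$ for each $u\in\mathcal H$.

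Fix $u\in\mathcal H$ and $\epsilon>0$, and apply the definition of weak containment $\pi\prec\rho$ to $u$, the finite set $Q$, and $\epsilon$. This produces vectors $v_1,\dots,v_m\in\mathcal K$ with
\[\Big|\langle\pi(g)u,u\rangle-\sum_{i=1}^m\langle\rho(g)v_i,v_i\rangle\Big|<\epsilon \quad\text{for all } g\in Q.\]
Multiply by $c_g$ and sum over $g\in Q$. The $\rho$-side becomes $\sum_i\langle\rho(b)v_i,v_i\rangle=0$, and therefore
\[0\le \|\pi(a)u\|^2=|\langle\pi(b)u,u\rangle|\le \epsilon\sum_{g\in Q}|c_g|.\]
Because $\epsilon$ is arbitrary and the constant $\sum|c_g|$ depends only on $h$, we get $\pi(a)u=0$. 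As $u$ was arbitrary, $h(\pi(\bar g))=\pi(a)=0$, that is, $h\in J_{G,\pi}$.

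The one point needing care is to pass through $h^*h$ rather than $h$ directly. Weak containment controls only diagonal matrix coefficients (positive-definite functions), so $\langle\pi(a)u,u\rangle$ alone would not force $\pi(a)=0$. Using $\pi(h^*(\bar g))=\pi(h(\bar g))^*$ for unitary $\pi$ converts the vanishing of the operator into the vanishing of the diagonal coefficient $\langle\pi(b)u,u\rangle=\|\pi(a)u\|^2$. Beyond this, it remains only to check that $h^*(\bar g)h(\bar g)$ is a finite combination of group elements, which holds since $h$ has finitely many terms.
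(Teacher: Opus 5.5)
Your proof is correct and follows essentially the same route as the paper: you replace $h$ by $h^*(\bar g)h(\bar g)$, whose $\rho$-image vanishes and whose $\pi$-image has diagonal coefficient $\|h(\pi(\bar g))u\|^2$, then apply weak containment on a finite set and let $\epsilon\to 0$. The only difference is minor. You take $Q$ to be the actual support of $h^*(\bar g)h(\bar g)$ and bound by $\epsilon\sum_{g\in Q}|c_g|$, whereas the paper uses the word-length ball of radius $2N$ with $|Q|<n^{2N+1}$ and a per-coefficient bound $M^2$. That $M^2$ bound should really carry a factor counting pairs of monomials, which your version avoids.
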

\begin{proof}
Let $h(x)=\sum_{|\tau|\leq N}c_\tau x_\tau$ be an element in $J_{G,\rho}$ of degree $N$. Then $h(\rho(\bar{g}))=0$. Set $Q=\{g\in G: \ell (g)\leq 2N\}$ and observe that $h^*(\bar{g})h(\bar{g})\in \SP Q\subset \C[G]$. Assume that $S$ is a symmetric generating set for $G$ with cardinality $|S|=n$. Then, a simple upper bound estimate of $|Q|$ is given by
\begin{align*}
|Q|&=\sum_{m=0}^{2N}|\{g\in G: \ell (g)= m\}|\\
&\leq \sum_{m=0}^{2N}n^m<n^{2N+1}.
\end{align*}
We set $M=\max\{|c_\tau | : |\tau|\leq N\}$. Since $\pi\prec \rho$, for any $u\in {\mathcal H}$ and $\epsilon>0$, there exist $v_1, ..., v_m\in {\mathcal K}$ such that
\[\bigg|\langle \pi(g)u, u\rangle-\sum_{i=1}^m\langle \rho(g)v_i, v_i\rangle\bigg|<\frac{\epsilon}{n^{2N+1}},\ \ \ \forall g\in Q.\] It follows that
\begin{align*}
\|h(\pi(\bar{g}))u\|^2&=\|h(\pi(\bar{g}))u\|^2-\sum_{i=1}^m\|h(\rho(\bar{g}))v_i\|^2\\
&=\bigg|\langle \pi(h^*(\bar{g})h(\bar{g}))u, u\rangle-\sum_{i=1}^m\langle \rho(h^*(\bar{g})h(\bar{g}))v_i, v_i\rangle\bigg|\\
&\leq M^2\sum_{g\in Q}\bigg|\langle \pi(g)u, u\rangle-\sum_{i=1}^m\langle \rho(g)v_i, v_i\rangle\bigg|< M^2 \epsilon,
\end{align*}
which implies $h(\pi(\bar{g})u=0$. Since $u$ is arbitrary, we have $h(\pi(\bar{g}))=0, i.e., h\in J_{G,\pi}$.\end{proof}
\noindent As an immediate consequence, if $\pi\sim \rho$, then $J_{G,\rho}= J_{G,\pi}$.
 
\section{Minimal Polynomials in Several Variables}\label{min}

Given a tuple of $k\times k$ matrices $A=(A_1, ..., A_n)$, their characteristic polynomial $Q_A(z_0,z)$ is homogeneous of degree $k$, and, as we saw above,  the Cayley-Hamilton theorem implies $Q_A(-A_*(z),z)=0$ for all $z\in \C^n$. It is a natural question whether there exists a polynomial
$q(z_0, z)\in \C[z_0, ..., z_n]$ of degree less than $k$ such that $q(-A_*(z), z)=0$ in $\C^n$. This motivates the following definition.
\begin{definition}\label{defminimal}
    A homogeneous polynomial $q\in \C[z_0, ..., z_n]$ is said to be {\em minimal} for the tuple $A$ if 
    
    a) $q(-A_*(z),z)=0$ for all $z\in \C^n$, and

    b) there exists no such polynomial of lower degree.
\end{definition}
\noindent A distinction is worth noting. For a matrix tuple, its annihilating polynomials are free polynomials (Definition \ref{annihi}), while its minimal polynomial (as defined above) is a homogeneous polynomial in several complex variables. For example, the free polynomials $x_i^2-1, x_ix_j+x_jx_i, 1\leq i, j\leq 3, i\neq j$ are annihilating polynomials for the tuple $\sigma$ of Pauli matrices, while the polynomial $z_0^2-z_1^2-z_2^2-z_3^2$ is the minimal polynomial for $\sigma$. Furthermore, in light of Theorem \ref{C-H}, the minimal polynomial of a tuple $A$ gives rise to several annihilating polynomials of $A$ with degrees less than or equal to the degree of the minimal polynomial. 

\subsection{Cyclicity and Minimality}

Recall that a subalgebra $\mathfrak{A}$ of the algebra of bounded linear operators acting on a Banach space $X$ is said to be cyclic, if there is a vector $x\in X$ such that $\mathfrak{A}x$ is dense in $X$. Of course, if $X=\C^k$, then $\mathfrak{A}$ is a subalgebra in $M_k(\C)$. Given a $k\times k$ matrix $T$, the Cayley-Hamilton Theorem implies that the algebra generated by $T$ is $\mathfrak{A}(T)=\SP\{I, T, ..., T^{k-1}\}$. We say that $T$ is cyclic, if $\mathfrak{A}(T)$ is.
Thus, matrix $T$ is cyclic if and only if $\{I, T, ..., T^{k-1}\}$ is linearly independent, and this occurs if and only if its characteristic polynomial is minimal, for instance see \cite[Sect. 7.1]{HK}. 

Since this condition can be expressed as a polynomial inequality in the entries of $T$, the set of cyclic matrices in $M_k(\C)$ is open. We state this well-known fact as a lemma. Details can be found in the proof of Theorem 5.5. 
\begin{lemma}\label{cyclicM}
The set of cyclic matrices is open in $M_k(\C)$.
\end{lemma}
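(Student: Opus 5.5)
The plan is to turn cyclicity into the nonvanishing of a single polynomial in the entries of $T$. The set where that polynomial is nonzero is then open, because polynomials are continuous. As recalled just above, $T\in M_k(\C)$ is cyclic exactly when $\{I, T, \dots, T^{k-1}\}$ is linearly independent in $M_k(\C)\cong \C^{k^2}$. So the first step is to express this linear independence through minors.

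Concretely, for each $T$ form the $k^2\times k$ matrix $W(T)$ whose $j$-th column is $T^{j-1}$ written as a vector in $\C^{k^2}$, for $j=1,\dots,k$. The entries of $W(T)$ are polynomials in the entries of $T$, since matrix powers are polynomial in the entries. The set $\{I, T, \dots, T^{k-1}\}$ is linearly independent if and only if $W(T)$ has rank $k$. That happens if and only if some $k\times k$ minor of $W(T)$ is nonzero. Each such minor $m_\alpha(T)$, indexed by a choice $\alpha$ of $k$ rows, is a polynomial in the entries of $T$ and hence continuous. The set of cyclic matrices is therefore
\[\bigcup_{\alpha}\{T\in M_k(\C): m_\alpha(T)\neq 0\},\]
a finite union of open sets, and so it is open.

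Alternatively, one can use a single Gram determinant, $\det\big(W(T)^*W(T)\big)$. This is a real polynomial in the real and imaginary parts of the entries of $T$, and it is nonzero exactly when the columns are independent. This version gives one polynomial inequality, matching the phrasing in the text before the lemma.

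The only point that needs care is justifying the characterization of cyclicity by linear independence of $\{I, T,\dots,T^{k-1}\}$, which the text takes from \cite[Sect. 7.1]{HK}. One direction is immediate. If $x$ is a cyclic vector, then $\mathfrak{A}(T)x=\C^k$, so $\dim\mathfrak{A}(T)\ge k$. By Cayley–Hamilton, $\mathfrak{A}(T)=\SP\{I,\dots,T^{k-1}\}$, so these $k$ powers must be independent. For the converse we invoke the cited fact: if the minimal polynomial has degree $k$, then $T$ has a cyclic vector. Since the lemma is stated as well known, I would cite this rather than reprove it. With that granted, the openness argument above is routine.
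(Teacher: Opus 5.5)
Your proof is correct and follows essentially the paper's route: both reduce cyclicity to linear independence of $\{I,T,\dots,T^{k-1}\}$ (taking the converse from Hoffman--Kunze) and then write full column rank of the $k^2\times k$ matrix of vectorized powers as a nonvanishing polynomial condition in the entries of $T$. One small point in your favor: the paper's details (in the proof of Theorem~\ref{lcyclic}) test rank via $\det\big(M^T M\big)$ with a plain transpose, which over $\C$ can vanish even when $M$ has full rank (e.g.\ for the cyclic nilpotent $T=\begin{pmatrix}1&i\\ i&-1\end{pmatrix}$ one gets $M^TM=\begin{pmatrix}2&0\\ 0&0\end{pmatrix}$), whereas your $k\times k$ minors or $\det\big(W(T)^*W(T)\big)$ express the condition correctly.
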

For a matrix tuple $A$, let ${\mathfrak A}(A)$ denote the unital subalgebra of $M_k(\C)$ generated by $I, A_1, ..., A_n$. We say that $A$ is {\em cyclic} if there exists a vector $v\in \C^k$ such that ${\mathfrak A}(A)v=\C^k$. Such $v$ will be called a {\em cyclic vector} for the tuple $A$. Furthermore, we say that $A$ is {\em linearly cyclic} if there exists a vector $w\in \C^n$ such that $A_*(w)=w_1A_1+\cdots +w_nA_n$ is cyclic. Apparently, in the case $n=1$, cyclicity coincides with linear cyclicity. When $n\geq 2$, linear cyclicity is a stronger condition on the tuple $A$ than cyclicity. 

If a nonzero vector $v$ is not a cyclic vector for tuple $A$, then $M:={\mathfrak A}(A)v$ is a closed proper subspace of $\C^k$ that is invariant for all $A_j$. Conversely, if $M$ is a common invariant subspace for the tuple $A$, then any nonzero vector $v\in M$ is obviously noncyclic for $A$. In this case, the space $\C^k$ can be decomposed into a orthogonal direct sum $M\oplus M^\perp$,
with respect to which each matrix $A_j$ has the upper triangular block form 
\begin{equation}\label{decom}
\begin{pmatrix} A_j' & \ast \\ 0 & A_j'' \end{pmatrix},\ \ \ j=1, ...,n.
\end{equation}Letting $A'$ and $A''$ denote the tuples $(A'_1, ..., A'_n)$ and $(A''_1, ..., A''_n)$, respectively, we obtain a factorization of the characteristic polynomials: $Q_A=Q_{A'}Q_{A''}$. Therefore, if $Q_A$ is irreducible, then tuple $A$ is irreducible (has no nontrivial common invariant subspaces), and consequently $\mathfrak{A}(A)=M_k(\C)$ due to Burnside's theorem \cite{Bur,LR}. Thus, the characteristic polynomial $Q_A$ offers a simple sufficient condition for determining the irreducibility of $A$. However, this condition is far from being necessary. Kippenhahn's conjecture \cite{Ki} attempted to address this issue for Hermitian matrix tuples. We will take a close look at this conjecture in Sections \ref{Kippenhahn} and \ref{characteristic graph}.

If $M$ is a common reducing subspace of $A$, then the block matrices in (\ref{decom}) are diagonal. In this case, we say that tuple $A$ is {\em reducible} (also called {\em unitarily reducible} in the literature).
\begin{example}
    Let $G$ be a group as before and consider an irreducible representation $\pi: G\to U(k)$, where $U(k)$ stands for the group of unitary $k\times k$ matrices. Then the tuple $\pi(\overline{g})$ is irreducible.
\end{example}

Definition \ref{defminimal} has the following immediate consequence.
\begin{lemma}\label{minip}
Suppose $q(z_0,z)$ is a degree-$d$ minimal polynomial of tuple $A$. Then for each fixed $w\in \C^n, w\neq 0$, the minimal polynomial of $A_*(w)$ has degree no more than $d$.
\end{lemma}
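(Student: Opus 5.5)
The plan is to specialize the identity in Definition \ref{defminimal} at the fixed point $w$, which turns it into a one-variable annihilating polynomial for the single matrix $A_*(w)$. The only real issue is that this specialization might vanish identically; I will deal with that by a density-and-closedness argument.

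\textbf{Step 1 (specialization).} Write
\[q(z_0,z)=\sum_{j=0}^d z_0^{d-j}r_j(z),\]
where $r_j$ is homogeneous of degree $j$ and $q\not\equiv 0$. For $u\in\C^n$ define the one-variable polynomial
\[f_u(x):=q(-x,u)=\sum_{j=0}^d (-x)^{d-j}r_j(u)\in\C[x].\]
Its degree is at most $d$. Condition a) of Definition \ref{defminimal} at $z=u$ says $f_u(A_*(u))=0$. Therefore, whenever $f_u\not\equiv 0$, the minimal polynomial of $A_*(u)$ divides $f_u$ and so has degree $\le d$.

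\textbf{Step 2 (the degenerate case).} If $f_w\not\equiv 0$ we are done. This holds, for instance, when the constant $r_0$ is nonzero. The obstacle is the case where $r_j(w)=0$ for every $j$, i.e.\ $f_w\equiv 0$. Since $q\not\equiv0$, some $r_j$ is a nonzero polynomial on $\C^n$. Its zero set is nowhere dense, so there are points $w^{(m)}\to w$ with $f_{w^{(m)}}\not\equiv 0$. By Step 1, each $A_*(w^{(m)})$ has minimal polynomial of degree $\le d$.

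\textbf{Step 3 (closedness).} For a matrix $T\in M_k(\C)$, the minimal polynomial has degree $\le d$ if and only if $I,T,\dots,T^{d}$ are linearly dependent. (If $d\ge k$ there is nothing to prove, by the Cayley--Hamilton theorem.) Linear dependence means that the $k^2\times(d+1)$ matrix whose columns are the vectorizations of $I,T,\dots,T^d$ has rank $<d+1$. That is the vanishing of all its $(d+1)\times(d+1)$ minors, which is a closed polynomial condition in the entries of $T$; this is the complementary viewpoint to Lemma \ref{cyclicM}. Since $A_*(w^{(m)})\to A_*(w)$, the limit $A_*(w)$ also satisfies this condition, so its minimal polynomial has degree no more than $d$.
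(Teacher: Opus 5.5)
Your proof is correct, and Step 1 is exactly the paper's argument: the paper gives no proof beyond calling the lemma an immediate consequence of Definition \ref{defminimal}, i.e.\ specialize $q(-A_*(z),z)=0$ at $z=w$. Your Steps 2--3 (approximating $w$ by points where $q(\cdot,w^{(m)})\not\equiv 0$ and using that ``minimal polynomial of degree $\le d$'' is a closed condition) handle the case $q(z_0,w)\equiv 0$. The paper passes over this case by tacitly treating $q$ as monic in $z_0$, so your argument is the more complete one and does not need that assumption.
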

   
\noindent In particular, if $A$ is linearly cyclic, then there exists a complex vector $w\in \C^n$ such that $A_*(w)$ is cyclic. Hence the characteristic polynomial of the matrix $A_*(w)$ is minimal and has degree $k$. Lemmas \ref{cyclicM}, \ref{minip}, and Riemann extension theorem thus lead to the following result. 

\begin{theorem}\label{lcyclic}
The characteristic polynomial $Q_A$ is minimal if and only if $A$ is linearly cyclic.
\end{theorem}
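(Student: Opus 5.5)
The proof splits into the two implications, and in both directions I work with the minimal polynomials of the single matrices $A_*(z)$, $z\in\C^n$.

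\emph{Linear cyclicity implies minimality.} Suppose $A_*(w)$ is cyclic for some $w$. Suppose also that $Q_A$ is not minimal, so there is a nonzero homogeneous $q$ of degree $d<k$ with $q(-A_*(z),z)=0$ for all $z$. By Lemma \ref{cyclicM}, $A_*(z)$ is cyclic for all $z$ in some open neighborhood $U$ of $w$. Write $q(z_0,z)=\sum_{j\le d} c_j(z) z_0^j$.

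The set of $z$ with all $c_j(z)=0$ is a proper algebraic subset of $\C^n$, since $q\neq 0$. Hence it cannot contain the open set $U$, and we may pick $z\in U$ outside it. Then $p(t):=q(-t,z)$ is a nonzero one-variable polynomial of degree $\le d<k$ that annihilates the cyclic matrix $A_*(z)$. This contradicts the fact that a cyclic matrix has minimal polynomial of degree $k$. This is essentially the content of Lemma \ref{minip}, made precise by the genericity choice of $z$.

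\emph{Minimality implies linear cyclicity.} I argue by contraposition: assume no $A_*(z)$ is cyclic. Let $d$ be the maximum over $z$ of the degree of the minimal polynomial $m_z$ of $A_*(z)$; then $d<k$.

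The set $U=\{z: I, A_*(z),\dots,A_*(z)^{d-1}\ \text{linearly independent}\}$ is the complement of the common zero set of certain minors. These minors are polynomial in $z$, so $U$ is Zariski open and nonempty, hence dense.

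On $U$, write $m_z(t)=t^d+a_1(z)t^{d-1}+\cdots+a_d(z)$. Solving $A_*(z)^d=-\sum a_j(z)A_*(z)^{d-j}$ by Cramer's rule on a nonvanishing minor shows that the $a_j$ are holomorphic (indeed rational) on $U$. Since $A_*(\lambda z)=\lambda A_*(z)$ and $m_{\lambda z}(t)=\lambda^d m_z(t/\lambda)$, each $a_j$ satisfies $a_j(\lambda z)=\lambda^j a_j(z)$.

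The key step, and the one I expect to be the main obstacle, is extending the $a_j$ across $\C^n\setminus U$. The roots of $m_z$ are eigenvalues of $A_*(z)$, so they are bounded by $\|A_*(z)\|$. Hence each $a_j$, being an elementary symmetric function of these roots, is locally bounded near every point of the proper analytic set $\C^n\setminus U$. By the Riemann extension theorem each $a_j$ extends to an entire function. An entire function homogeneous of degree $j$ is a homogeneous polynomial of degree $j$.

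Now set
\[q(z_0,z)=(-z_0)^d+\sum_{j=1}^d a_j(z)(-z_0)^{d-j},\]
which is a nonzero homogeneous polynomial of degree $d$. By construction $q(-A_*(z),z)=m_z(A_*(z))=0$ on $U$, hence for all $z$ by continuity and density. Since $d<k$, $Q_A$ is not minimal, which completes the contrapositive.
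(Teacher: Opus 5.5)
Your proof is correct and follows essentially the same route as the paper. For one direction you use Lemma \ref{minip}, made rigorous by your generic choice of $z$ with some coefficient $c_j(z)\neq 0$. For the converse you argue by contrapositive through the maximal degree $d$, rationality and homogeneity of the coefficients on a Zariski-open set, local boundedness of symmetric functions of eigenvalues, and the Riemann extension theorem.

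Your Cramer's-rule step on a nonvanishing $d\times d$ minor is cleaner than the paper's formula $(M^TM)^{-1}M^T$. Over $\C$, a full-column-rank $M$ need not have $M^TM$ invertible: for example $M=(1,i)^T$ gives $M^TM=0$.
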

\begin{proof}
If $A$ is linearly cyclic, then there exists a vector $w\in \C^n$ for which $A_*(w)$ is cyclic, implying that the minimal polynomial for $A_*(w)$ coincides with its characteristic polynomial which has degree $k$. It follows from Lemma \ref{minip} that the minimal polynomial of $A$ has degree $k$, showing that $Q_A$ is minimal.

To prove the necessity, we show that if tuple $A$ is not linearly cyclic then $Q_A$ is not minimal.
First, we let $d(z)$ denote the degree of the minimal polynomial for $A_*(z)$ and set \[r_A:=\max \{d(z): z\in \C^n\}.\] For convenience, we shall write $r_A$ often as $r$. If $d(w)=k$ for some $w\in \C^n$, then $A_*(w)$ is cyclic, contradicting the assumption that $A$ is not linearly cyclic. Thus, we assume $r=d(w)<k$. This implies that the set $\{I, A_*(w), ..., A^{r-1}_*(w)\}$ is linearly independent. For any matrix $T\in M_k(\C)$, we let vec$(T)$ denote the vector in $\C^{k^2}$ in which the $j$th block is the $j$th column of $T$. Then the $k^2\times r$ matrix 
\[M(w):=\begin{pmatrix} \text{vec}(I) & \text{vec}(A_*(w)) & \cdots & \text{vec}(A^{r-1}_*(w))\end{pmatrix}\] has rank $r$, and consequently the matrix $M^T(w)M(w)$ is a $r\times r$ invertible matrix. It follows that there is an open neighborhood of $w$ on which $M^T(z)M(z)$ is invertible. Therefore, the set $\{z\in \C^n: d(z)=r\}$ is open. In fact, more information can be derived. Suppose 
\[p(z_0,w)=z_0^r-a_{r-1}(w)z_0^{r-1}-\cdots -a_0(w)=\prod_{j=1}^r(z_0-\lambda_j(w))\] is the minimal polynomial for $A_*(w)$. Then each $\lambda_j(w)$ is an eigenvalue of $A_*(w)$. Since $A_*(tw)=tA_*(w), t\in \C$, we have $\lambda_j(tw)=t\lambda_j(w)$. Thus the functions $a_i(w)$ are homogeneous of degree $r-i$. We claim that $a_i$ is a polynomial for every $0\leq i\leq r-1$. 

Indeed, since $p(A_*(w),w)=0$, it follows that
\[M(w)\begin{pmatrix} a_0(w)\\ a_1(w) \\ \vdots \\a_{r-1}(w)\end{pmatrix}=\text{vec}(A^{r}_*(w)),\] and consequently
\begin{equation}\label{coeff}
\begin{pmatrix} a_0(w)\\ a_1(w) \\ \vdots \\a_{r-1}(w)\end{pmatrix}=\left(M^T(w)M(w)\right)^{-1}M^T(w)\text{vec}(A^r_*(w)).
\end{equation}
Equation \eqref{coeff} leads to the following two facts:

\vspace{1mm}

1) The set \[V:=\{z\in \C^n: d(z)\leq r-1\}=\{z\in \C^n: \det \left(M^T(z)M(z)\right)=0\},\] which is an algebraic variety in $\C^n$ of dimension $n-1$.

2) The coefficients $a_0(w), ..., a_{r-1}(w)$ are rational functions in $\C^n$ with possible singularities in $V$.

\vspace{1mm}

Furthermore, since each $a_j(w)$ is a symmetric polynomial in the eigenvalues $\lambda_1(w), ..., \lambda_r(w)$ of $A_*(w)$, it is bounded in a neighborhood of every point in $V$. Riemann extension theorem then implies that $a_j$ is in fact a polynomial. This shows that $p(z_0,z)$ is a minimal polynomial for tuple $A$. Since $r<k$, the characteristic polynomial $Q_A$ is thus not a minimal polynomial, completing the proof of the theorem.

\end{proof}

The following corollaries are perfect several variable extensions of classical results for single matrices.

\begin{corollary}\label{properties}
The following hold for all matrix tuples $A$:

a) The minimal polynomial is unique.

b) The minimal polynomial is a factor of $Q_A$.
\end{corollary}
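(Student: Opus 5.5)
We prove both parts by comparing a minimal polynomial of $A$, restricted to a generic point $w\in\C^n$, with the minimal polynomial of the single matrix $A_*(w)$, and then pass from pointwise statements to polynomial identities.

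\textbf{Step 1: generic degree.} Let $r=r_A=\max_z d(z)$ as in the proof of Theorem \ref{lcyclic}. That proof produces a homogeneous polynomial
\[p(z_0,z)=z_0^r-a_{r-1}(z)z_0^{r-1}-\cdots-a_0(z),\]
with polynomial coefficients $a_j$, such that $p(A_*(z),z)=0$ for all $z$. (The sign in the convention $q(-A_*(z),z)$ is harmless after replacing $z_0$ by $-z_0$.) Hence any minimal polynomial has degree at most $r$.

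Conversely, let $q$ be minimal, of degree $d$. By Lemma \ref{minip}, $d(w)\le d$ for all $w$, so $r\le d$. Therefore $d=r$.

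\textbf{Step 2: uniqueness.} Let $q$ be minimal of degree $r$ and let $U=\{w: d(w)=r\}$; this set is open, dense and Zariski open, being the complement of the variety $V$.

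First we check that $q$ has nonzero $z_0^r$-coefficient. Otherwise $q(z_0,w)$ has $z_0$-degree less than $r$ for each $w$. Then $q(\cdot,w)$, which annihilates $A_*(w)$, must vanish identically for $w\in U$, since the minimal polynomial of $A_*(w)$ has degree $r$ there. By density this gives $q\equiv 0$, a contradiction. So we may normalize $q$ to be monic in $z_0$.

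Now, for $w\in U$, the monic polynomial $q(\cdot,w)$ has degree $r$ and annihilates $A_*(w)$. It therefore equals the minimal polynomial of $A_*(w)$, which in turn equals $p(\cdot,w)$. So $q-p$ vanishes on $\C\times U$, and hence $q=p$ as polynomials. This proves (a), with uniqueness understood up to a scalar multiple, i.e.\ after normalizing to be monic in $z_0$.

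\textbf{Step 3: divisibility.} Both $p$ and $Q_A$ are monic in $z_0$ with coefficients in $R=\C[z_1,\dots,z_n]$. Divide in $R[z_0]$:
\[Q_A=s\,p+t,\qquad \deg_{z_0}t<r.\]
Division by a monic polynomial keeps $s$ and $t$ in $R[z_0]$. Comparing homogeneous components, we may take $s$ and $t$ homogeneous.

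Fix $w\in U$. The single-variable minimal polynomial $p(\cdot,w)$ divides the characteristic polynomial of $A_*(w)$ (classical fact). Hence $t(\cdot,w)$ is a polynomial of degree less than $r$ divisible by $p(\cdot,w)$, which has degree $r$, so $t(\cdot,w)=0$. Density of $U$ gives $t\equiv 0$, so $p\mid Q_A$, which is (b).

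\textbf{Expected obstacle.} The main care point is bookkeeping rather than depth: matching the sign convention $q(-A_*(z),z)$ with $Q_A(z_0,z)=\det(z_0I+A_*(z))$, whose roots in $z_0$ are the negatives of the eigenvalues. This is handled by working throughout with $\tilde p(z_0,z)=p(-z_0,z)$, up to sign. A second care point is that uniqueness holds only up to a constant factor, which is why the monic normalization in Step 2 is needed.
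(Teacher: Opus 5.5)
Your proof is correct. For part (a) you follow essentially the paper's route. The paper also compares two minimal polynomials at a point $w\in\C^n\setminus V$ where $I, A_*(w),\dots,A_*^{r-1}(w)$ are independent. It phrases this as ``the difference $p-p'$ has $z_0$-degree $<r$ and so forces a linear dependence,'' whereas you say ``both specialize to the single-matrix minimal polynomial.'' Your Steps 1--2 also justify two things the paper takes for granted when it writes $p,p'$ as monic of degree $r$: every minimal polynomial has degree exactly $r_A$, and its $z_0^r$-coefficient (a constant) is nonzero. You also correctly note that uniqueness holds only up to a scalar.

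For part (b) your route genuinely differs. The paper forms the quotient $Q_A/p$ and observes that its $z_0$-coefficients are symmetric functions of the eigenvalues of $-A_*(z)$, hence locally bounded. It then concludes, by a Riemann-extension-type argument, that a locally bounded rational function is a polynomial. You instead divide with remainder by the monic polynomial $p$ in $\C[z_1,\dots,z_n][z_0]$ and kill the remainder on the dense set $U$. Your argument is purely algebraic and sidesteps a soft spot in the paper's version. The paper asserts that $p(\cdot,z)$ divides $Q_A(\cdot,z)$ for every fixed $z$, but this is only evident for $z\notin V$, where $p(\cdot,z)$ is the minimal polynomial of the single matrix. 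In exchange, the paper's version makes visible the link between the cofactor $Q_A/p$ and the eigenvalues of the pencil.

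One small point: Lemma \ref{minip} only gives information at $w$ with $q(\cdot,w)\not\equiv 0$. Since $q\neq 0$, such $w$ exist in the nonempty open set $U$, and that is all you need for $r\le d$.
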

\begin{proof}

For part a), suppose $p$ and $p'$ are minimal polynomials of $A$ with degree $r$. Then the proof of Theorem \ref{lcyclic} shows that there exists a vector $w\in \C^n$ such that the set $\{I, A_*(w), ..., A^{r-1}_*(w)\}$ is linearly independent. Moreover, the set of all such vectors $w$ is equal to $\C^n\setminus V$ for some algebraic variety $V\subset \C^n$. We write
\begin{align*}
  p(z_0,z)&=z_0^r+z_0^{r-1}q_1(z)+\cdots +q_{r}(z),\\
  p'(z_0,z)&=z_0^r+z_0^{r-1}q'_1(z)+\cdots +q'_{r}(z),
  \end{align*} 
where $q_j$ and $q_j'$ are homogeneous polynomials of degree $1\leq j\leq r$ in $z\in \C^n$. Then 
\begin{equation}\label{Q-p}
0=(p-p')(-A_*(z),z)=(-A_*(z))^{r-1}(q_1-q'_1)(z)+\cdots +(q_r-q'_r)(z),\ \ \ z\in \C^n.
\end{equation}
If $p\neq p'$, then there exists a $w\in \C^n\setminus V$ such that $q_j(w)\neq q'_j(w)$ for some $j$. It thus follows from \eqref{Q-p} that the set $\{I, A_*(w), ..., A^{r-1}_*(w)\}$ is linearly dependent, which is a contradiction.

For part b), the claim is trivial if the minimal polynomial $p=Q_A$. Hence we assume that $r=\text{deg}(p)<k$. For any fixed $z\in \C^n$, since $p(z_0,z)$ divides the characteristic polynomial $Q_A(z_0,z)$ for the matrix $-A_*(z)$, we can write 
\[\frac{Q_A(z_0,z)}{p(z_0,z)}=: z_0^{k-r}+z_0^{k-r-1}b_1(z)+\cdots +b_{k-r}(z)=:q(z_0,z),\]
where each $b_j(z)$ is a symmetric polynomial in the eigenvalues of $-A_*(z)$. It follows that each $b_j(z)$ is locally bounded in $\C^n$. Therefore, the quotient $q(z_0,z)$ is a locally bounded rational function in $\C^{n+1}$ and thus must be a polynomial.
\end{proof}

A direct consequence of Corollary \ref{properties} is worth noting.
\begin{corollary}\label{admi1}
Given any tuple $A$, if $Q_A$ is irreducible, then it is minimal.
\end{corollary}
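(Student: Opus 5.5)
The plan is to derive the statement directly from Corollary \ref{properties}. Let $p$ denote the minimal polynomial of $A$, which exists and is unique by part a) of that corollary, and let $r=\deg p\le k$. By part b), $p$ divides $Q_A$ in $\C[z_0,z_1,\dots,z_n]$, so we can write $Q_A=p\cdot q$ for some polynomial $q$.

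Next we use the irreducibility of $Q_A$. Since $Q_A$ is irreducible, one of the two factors $p$ or $q$ must be a nonzero constant. First we rule out the case where $p$ is constant. Both $Q_A$ and $p$ are monic in $z_0$: this holds for $Q_A$ by (\ref{expandQ}), and for $p$ by the normalization used in the proof of Theorem \ref{lcyclic}, where $p(z_0,z)=z_0^r+\cdots$. Moreover, $r\ge 1$, because a nonzero constant $c$ cannot satisfy $c\cdot I=0$ when it is evaluated at $z_0=-A_*(z)$. Hence $p$ is not constant, and therefore $q$ must be a constant. Comparing the leading coefficients in $z_0$ gives $q=1$. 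So $Q_A=p$, which says exactly that $Q_A$ is minimal.

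The only delicate point is the normalization. We need to be sure that ``minimal polynomial'' in Definition \ref{defminimal} is taken to be monic in $z_0$, or else allow equality up to a scalar factor, and that $\deg p\ge1$. Both are immediate from the construction in Theorem \ref{lcyclic}, so there is no real obstacle.

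As an alternative route, one could also argue via degrees. Since $p$ divides $Q_A$ and $Q_A$ is irreducible, we get $\deg p\in\{0,k\}$, and as shown above $\deg p\ge1$, so $\deg p=k$ and $Q_A$ is minimal.
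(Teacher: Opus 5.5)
Your proposal is correct and follows the same route as the paper. The paper presents this corollary as a direct consequence of Corollary \ref{properties}: the minimal polynomial divides $Q_A$, and it is nonconstant and monic in $z_0$, so irreducibility forces it to equal $Q_A$.
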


In light of Corollary \ref{properties} a), we denote the minimal polynomial for a matrix tuple $A$ by $q_{\scalebox{0.6}{A}}$. Define the set 
\[I_{A}:=\{p(z_0,z)\in \C[z_0,z]: p(-A_*(z),z)=0, \ \forall z\in \C^n\}.\] It is not hard to see that $I_{A}$ is an ideal of $\C[z_0,z]$. The following result is a consequence of Gauss' Lemma \cite[Sect. 11.3]{Ar}.
\begin{corollary}\label{q_A}
For every matrix tuple $A$, we have $I_{A}=q_{\scalebox{0.6}{A}}\C[z_0,z]$.
\end{corollary}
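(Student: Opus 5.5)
We prove the two inclusions separately. The inclusion $q_{\scalebox{0.6}{A}}\C[z_0,z]\subseteq I_A$ is immediate: if $p=q_{\scalebox{0.6}{A}}\,s$ with $s\in\C[z_0,z]$, then for each fixed $z$ the substitution $z_0\mapsto -A_*(z)$ into polynomials in $z_0$ with scalar coefficients depending on $z$ is a ring homomorphism. Hence $p(-A_*(z),z)=q_{\scalebox{0.6}{A}}(-A_*(z),z)\,s(-A_*(z),z)=0$.

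For the reverse inclusion, take $p\in I_A$ and perform division in $z_0$. By Definition \ref{defminimal} and the normalization in the proof of Corollary \ref{properties}, $q_{\scalebox{0.6}{A}}$ is monic in $z_0$ of degree $r$. Division with remainder in $R[z_0]$, with $R=\C[z]$, therefore requires no inversion of coefficients, and gives
\[p=q_{\scalebox{0.6}{A}}\,s+t,\qquad s,t\in \C[z][z_0],\quad \deg_{z_0} t<r.\]
(Gauss' Lemma gives the same conclusion through divisibility in $\C(z)[z_0]$, since $q_{\scalebox{0.6}{A}}$ is primitive; but monicity makes the division direct.) Substituting $z_0=-A_*(z)$ shows $t(-A_*(z),z)=0$ for all $z$. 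Write
\[t=\sum_{j<r} c_j(z)\,z_0^j .\]
By the proof of Theorem \ref{lcyclic}, there is a proper algebraic subvariety $V\subset\C^n$ such that $\{I,A_*(w),\dots,A_*^{r-1}(w)\}$ is linearly independent for every $w\notin V$. It follows that $c_j(w)=0$ for all $w\in\C^n\setminus V$. Since $\C^n\setminus V$ is dense, each $c_j$ vanishes identically, so $t=0$ and $p\in q_{\scalebox{0.6}{A}}\C[z_0,z]$.

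One point needs checking. $I_A$ contains non-homogeneous polynomials, while the minimal polynomial is homogeneous. This causes no difficulty, because the division argument above never uses homogeneity of $p$. Alternatively, $I_A$ is a homogeneous ideal: apply $A_*(\lambda z)=\lambda A_*(z)$ to see that each homogeneous component of $p$ lies in $I_A$.

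The only substantive input is the existence of a point $w$ (indeed a dense open set) at which the powers $I,\dots,A_*^{r-1}(w)$ are independent, which the proof of Theorem \ref{lcyclic} supplies via the definition of $r=r_A$. I expect the main obstacle to be confirming that this $r$ equals $\deg q_{\scalebox{0.6}{A}}$; that identification is exactly what the proof of Theorem \ref{lcyclic} establishes.
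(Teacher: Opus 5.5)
Your proof is correct, but it gets there by a different mechanism than the paper. The paper fixes $f\in I_A$ and assumes it is monic in $z_0$. It then uses that, for $z\notin V$, the one-variable polynomial $f(\cdot,z)$ is divisible by the minimal polynomial of $-A_*(z)$ (namely $q_A(\cdot,z)$), argues that the pointwise quotient has rational coefficients, and invokes Gauss' Lemma to pass from divisibility in $\C(z)[z_0]$ to divisibility in $\C[z_0,z]$. You instead divide once, globally, in $\C[z][z_0]$ by the monic $q_A$. You then kill the remainder using the linear independence of $I,A_*(w),\dots,A_*^{r-1}(w)$ off $V$ together with density. Both routes rest on the same input from the proof of Theorem~\ref{lcyclic}: that $\deg q_A=r_A$, and that powers are independent at generic points. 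Yours is more elementary. It needs neither Gauss' Lemma nor the rationality argument for the quotient coefficients. It also applies to every $f\in I_A$ directly, without the paper's reduction to $f$ monic in $z_0$, which is not a genuine reduction (e.g.\ $z_1q_A\in I_A$ cannot be rescaled to be monic). The paper's formulation has the merit of mirroring the classical one-variable fact through pointwise divisibility, and the Gauss' Lemma route would survive if one knew only that $q_A$ is primitive rather than monic. One small point you borrow from the paper's normalization rather than prove is the monicity of $q_A$, since Definition~\ref{defminimal} does not stipulate it. It follows from your own independence step. If the (constant) coefficient of $z_0^r$ in a nonzero degree-$r$ homogeneous annihilator vanished, all its remaining coefficients would vanish on $\C^n\setminus V$, forcing the annihilator to be zero.
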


\begin{proof}
We assume $\deg q_{\scalebox{0.6}{A}}=r$. The proof of Theorem \ref{lcyclic} shows that there exists an algebraic variety $V\subset \C^n$ of dimension $n-1$ such that the minimal polynomial for the matrix $A_*(z)$ has degree $r$ for all $z\in \C^n\setminus V$. Consider any $f\in I_A$. Without loss of generality, we can assume $f$ is monic in the variable $z_0$ with degree $m$. Since $f(-A_*(z),z)=0$ for all fixed $z\in \C^n$, the polynomial $f(z_0,z)\in \C[z_0]$ is in the principle ideal generated by the minimal polynomial for $A_*(z)$. Therefore, on $\C^n\setminus V$ we can write 
\[\frac{f(z_0,z)}{q_{\scalebox{0.6}{A}}(z_0,z)}=z_0^{m-r}+z_0^{m-r-1}b_{m-r-1}(z)+\cdots +b_0(z)=:q(z_0,z),\]for some functions $b_0, ..., b_{m-r-1}$ defined on $C^n\setminus V$. Since 
\[b_{j}(z)=\frac{1}{j!}\bigg(\frac{\partial^j q(z_0,z)}{\partial z_0^j}\bigg)\bigg| _{z_0=0},\ \ \ j=0, ..., m-r-1,\] they are all rational.

Observe that $\C[z_0,z]=\C[z][z_0]$ and $\C[z]$ is a Unique Factorization Domain. Since $q_A(z_0, z)$ divides $f(z_0, z)$ in the ring $\mathbb{C}(z)[z_0]$, where $\mathbb{C}(z)$ is the field of fractions (rational functions), it divides $f(z_0, z)$ in the ring $\mathbb{C}[z_0,z]$ by Gauss' Lemma, showing that $q(z_0,z)\in \C[z_0,z]$.
\end{proof}

\subsection{A Note on Frobenius' Theorem}
Let $G$ be a finite group with generating set $\{g_1, ..., g_n\}$ and $\pi$ be a unitary representation of $G$. Recall that the characteristic polynomial of $G$ associated with $\pi$ is the characteristic polynomial of the tuple $\pi (\bar{g})$, for which we denote simply by $Q_\pi(z_0, z)$. Set $A_{\pi, *}(z)=z_1\pi(g_1)+\cdots +z_n\pi(g_n)$.
Since $G$ is finite, its dual $\hat{G}$ is a finite set. In light of Theorem \ref{Frob}, we define $q_{\scalebox{0.6}{G}}(z_0,z):=\prod_{[\pi]\in \hat{G}}Q_\pi(z_0, z)$. Recall that $\lb_{\scalebox{0.6}{G}}$ ($\lb$ for short) stands for the left regular representation of $G$. The following result sheds new light on Frobenius' theorem.
\begin{proposition}\label{minimal}
 Assume $G=\{1, g_1, ..., g_n\}$.  Then $q_{\scalebox{0.6}{G}}$ is the minimal polynomial for the tuple $\lb(\bar{g})$. 
 \end{proposition}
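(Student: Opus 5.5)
The plan is to prove two divisibilities between $q_{\lambda}$, the minimal polynomial of $\lb(\bar g)$, and $q_{\scalebox{0.6}{G}}$, using the decomposition of the regular representation into irreducibles together with Corollaries \ref{properties}, \ref{admi1} and \ref{q_A}. Everything will be normalized to be monic in $z_0$, so that divisibility in both directions forces equality.

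\emph{Step 1: $q_{\scalebox{0.6}{G}}$ annihilates the pencil.} Since $G$ is finite, $\lb\cong\bigoplus_{[\pi]\in\hat G}\pi^{\oplus d_\pi}$. So there is a unitary $U$, independent of $z$, such that
\[
U A_{\lb,*}(z)U^*=\bigoplus_{[\pi]}A_{\pi,*}(z)^{\oplus d_\pi}.
\]
Hence $q_{\scalebox{0.6}{G}}(-A_{\lb,*}(z),z)$ is unitarily equivalent to the block diagonal matrix with blocks $q_{\scalebox{0.6}{G}}(-A_{\pi,*}(z),z)$. For fixed $z$, $q_{\scalebox{0.6}{G}}(z_0,z)$ is a one-variable polynomial in $z_0$. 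By the definition of $q_{\scalebox{0.6}{G}}$ it has $Q_\pi(z_0,z)$ as a factor, so
\[
q_{\scalebox{0.6}{G}}(-A_{\pi,*}(z),z)=\Big(\prod_{[\rho]\neq[\pi]}Q_\rho\Big)(-A_{\pi,*}(z),z)\cdot Q_\pi(-A_{\pi,*}(z),z).
\]
This factorization is legitimate because all factors are polynomials in the single matrix $A_{\pi,*}(z)$ and therefore commute. The last factor vanishes by the Cayley--Hamilton theorem. Thus $q_{\scalebox{0.6}{G}}\in I_{\lb(\bar g)}$, and Corollary \ref{q_A} gives $q_{\lambda}\mid q_{\scalebox{0.6}{G}}$.

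\emph{Step 2: $q_{\scalebox{0.6}{G}}$ divides $q_\lambda$.} Fix $[\pi]\in\hat G$. Evaluating $q_\lambda(-A_{\lb,*}(z),z)=0$ through the same unitary equivalence, the block corresponding to a copy of $\pi$ gives $q_\lambda(-A_{\pi,*}(z),z)=0$ for all $z$. Hence $q_\lambda\in I_{\pi(\bar g)}$. By Theorem \ref{Frob}, $Q_\pi$ is irreducible, so by Corollary \ref{admi1} it is the minimal polynomial of $\pi(\bar g)$. Corollary \ref{q_A} then yields $Q_\pi\mid q_\lambda$.

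\emph{Step 3: combine the factors.} The polynomials $Q_\pi$ for distinct classes $[\pi]\in\hat G$ are irreducible and monic in $z_0$. They are pairwise distinct, because by Theorem \ref{Frob} each $Q_\pi$ determines $\pi$ up to equivalence, so they are pairwise non-associate. Since $\C[z_0,z]$ is a UFD, their product $q_{\scalebox{0.6}{G}}$ divides $q_\lambda$. Together with Step 1, and the fact that both polynomials are monic in $z_0$, this gives $q_\lambda=q_{\scalebox{0.6}{G}}$.

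The step I expect to need the most care is Step 3. The coprimality argument relies on the uniqueness clause of Frobenius' theorem: distinct irreducible representations must give distinct, hence non-associate, irreducible factors, and this is what makes the exponents in $q_\lambda$ exactly $1$ rather than merely at most $d_\pi$.
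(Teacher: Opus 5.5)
Your proof is correct and is essentially the paper's argument. Both use $\lb\cong\bigoplus_{[\pi]\in\hat G}\pi^{\oplus d_\pi}$ with Cayley--Hamilton applied blockwise to show that $q_{\scalebox{0.6}{G}}$ annihilates the pencil; then the irreducibility and pairwise distinctness of the $Q_\pi$ from Theorem \ref{Frob}, together with Corollaries \ref{admi1} and \ref{q_A}, force each $Q_\pi$ to divide the minimal polynomial. The only difference is organizational: you phrase minimality as a two-sided divisibility between polynomials monic in $z_0$ instead of the paper's contradiction on degrees, and you state the divisibility $Q_{\pi_0}\mid q$ in the correct direction, whereas the paper's text writes it reversed.
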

\begin{proof}
 First, we check that $q_{\scalebox{0.6}{G}}(-A_{\lb, *}(z), z)=0, z\in \C^n$. It is known that the regular representation is unitarily equivalent to a direct sum of irreducible representations, i.e., $\lb\cong\oplus_{[\pi]\in \hat{G}}\pi^{d_\pi}$, where $\pi^{d}$ stands for the direct sum of $d$ copies of $\pi$. Then 
 \begin{equation}\label{Alb}
 A_{\lb,*}(z)=\bigoplus_{[\pi]\in \hat{G}}\bigg(\bigoplus_{j=1}^{d_\pi}A_{\pi,*}(z)\bigg).
 \end{equation}
 \noindent It follows that
\begin{align*}
q_{\scalebox{0.6}{G}}(-A_{\lb,*}(z),z)&=\bigoplus_{[\pi]\in \hat{G}}\bigg(\bigoplus_{j=1}^{d_\pi}q_{\scalebox{0.6}{G}}(-A_{\pi,*}(z),z)\bigg)\\
    &=\bigoplus_{[\pi]\in \hat{G}}\bigg(\bigoplus_{j=1}^{d_\pi}\prod_{[\rho]\in \hat{G}} Q_\rho(-A_{\pi,*}(z),z)\bigg).
    \end{align*}
Since $Q_\pi(-A_{\pi,*}(z),z)=0$ by Cayley-Hamilton theorem, all summands above are equal to $0$, giving $q_{\scalebox{0.6}{G}}(-A_{\lb,*}(z),z)=0, z\in \C^n$.

Next, we check the minimality of $q_{\scalebox{0.6}{G}}$. Theorem \ref{Frob} indicates that all factors $Q_\pi$ of $q_{\scalebox{0.6}{G}}, [\pi]\in \hat{G}$, are irreducible. Moreover, we have $Q_\pi\neq Q_\rho$ if and only if $[\pi]\neq [\rho]$. If $q$ is the minimal polynomial of the tuple $\lb(\bar{g})$, then Corollary \ref{properties} b) implies that $q$ is a product of some or all factors $Q_\pi, [\pi]\in \hat{G}$. Suppose that $\deg q<\deg q_{\scalebox{0.6}{G}}$. Then at least one of the factors, say $Q_{\pi_0}$ is not a factor of $q$. Since $q(-A_{\lb,*}(z),z)=0$ for all $z\in \C^n$, decomposition \eqref{Alb} implies that $q(-A_{\pi,*}(z),z)=0$ for every $[\pi]\in \hat{G}$. In particular, we have $q(-A_{\pi_0,*}(z),z)=0$. Since $Q_{\pi_0}$ is irreducible, it is minimal by Corollary \ref{admi1}, and it follows from Corollary \ref{q_A} that $q$ must divide $Q_{\pi_0}$, which is a contradiction. This completes the proof.
\end{proof}

Proposition \ref{minimal} establishes a one-to-one correspondence between the unitary dual $\hat{G}$ and the irreducible factors of the minimal polynomial $q_{\scalebox{0.6}{G}}$. However, in the case $\{1, g_1, ..., g_n\}$ is a proper subset of $G$, the characteristic polynomial $Q_{\pi}$ is not necessarily irreducible \cite{KV}, and hence the proof above may not work in general. Nevertheless, the proof holds in this case for a large class of groups, for instance all finite abelian groups and Coxeter groups \cite{CST,Ya24}. The following problem is thus reasonable.
  
\begin{question}
 Suppose $G=\langle g_1, ..., g_n\rangle$ is a finite solvable group. Is $q_{\scalebox{0.6}{G}}$ 
 minimal for the tuple $\lb(\bar{g})$? 
  \end{question}    
  
  \vspace{1mm}

\subsection{The Minimal Polynomials of Hermitian Tuples}
In the sequel, a matrix tuple $A=(A_1, ..., A_n)$ is said to be {\em Hermitian} if each matrix $A_j$ is Hermitian. It is well-known that the characteristic polynomial of a single Hermitian matrix $T$ is minimal if and only if each eigenvalue of $T$ has multiplicity 1. The next theorem is a multivariable analog of this fact.

Recall that for an irreducible polynomial $R(z)\in \C[z_1, ..., z_n]$, a point $w$ in the algebraic variety $\{z\in \C^n: R(z)=0\}$ is said to be {\em regular} if the partial derivatives $\frac{\partial R}{\partial z_j}(w)\neq 0$ for some $1\leq j\leq n$. Equivalently, point $w$ is a regular point in the variety if the differential $dR$ does not vanish at $w$.
\begin{theorem}\label{chacteristic minimal}
Let $A$ be a Hermitian tuple. Then its characteristic polynomial $Q_A(z_0,z)$ is minimal if and only if it has no repeated factors.
\end{theorem}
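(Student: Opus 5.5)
The plan is to reduce both directions to the one-variable fact for Hermitian matrices, using real points $z\in\R^n$, where $A_*(z)$ is Hermitian and hence diagonalizable. The bridge is that for real $z$ the minimal polynomial of $A_*(z)$ is $\prod(z_0-\lambda)$ over its \emph{distinct} eigenvalues.

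\emph{Only if.} Suppose $Q_A=R^2S$ with $R$ nonconstant, and set $P=RS$. For every $z\in\R^n$ the roots in $z_0$ of $P(z_0,z)$ are exactly the roots of $Q_A(z_0,z)$, i.e. the numbers $-\lambda$ for $\lambda$ an eigenvalue of $A_*(z)$. Hence $P(z_0,z)$ is divisible by the product of the distinct linear factors $z_0+\lambda$, and since $-A_*(z)$ is diagonalizable, $P(-A_*(z),z)=0$. The map $z\mapsto P(-A_*(z),z)$ is a matrix of polynomials in $z$ vanishing on $\R^n$, so it vanishes on $\C^n$. Thus $P\in I_A$ with $\deg P<k$, and $Q_A$ is not minimal.

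\emph{If.} Suppose $Q_A$ has no repeated factors. By Corollary \ref{q_A} the minimal polynomial $q_A$ divides $Q_A$. I argue by contradiction: if $\deg q_A<k$, then $Q_A=q_A\cdot h$ with $h$ nonconstant, and since $Q_A$ is squarefree, $h$ and $q_A$ share no irreducible factor. Take an irreducible factor $R$ of $h$, so $R\nmid q_A$. The goal is to find a point $(z_0,z)$ with $z$ real, $R(z_0,z)=0$ and $q_A(z_0,z)\neq 0$. At such a point $-z_0$ is an eigenvalue of $A_*(z)$, since $Q_A(z_0,z)=0$. Because $A_*(z)$ is diagonalizable, $q_A(-A_*(z),z)=0$ forces $q_A(-\lambda,z)=0$ for every eigenvalue $\lambda$, and in particular $q_A(z_0,z)=0$, a contradiction. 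In fact the argument shows that every root of $Q_A(\cdot,z)$ is a root of $q_A(\cdot,z)$ for all real $z$.

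Existence of such a point is the main obstacle. The tool is the Hermitian structure: $Q_A(z_0,z)=\det(z_0I+A_*(z))$ is hyperbolic with respect to $e_0$, so every factor $R$, taken with real coefficients, has only real roots in $z_0$ for each real $z$. Hence $R(\cdot,z)$ has $\deg_{z_0}R\ge 1$ real roots for every $z\in\R^n$. Note that $R$ is monic in $z_0$ up to a constant, since $Q_A$ is monic in $z_0$. Since $R$ and $q_A$ are coprime in $\C(z)[z_0]$, their resultant in $z_0$ is a nonzero polynomial in $z$, so it is nonzero at some real $z$. At that $z$, choose a root $z_0$ of $R(\cdot,z)$; then $q_A(z_0,z)\neq0$, as required.

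If hyperbolicity is not to be invoked explicitly, an alternative is to observe directly that all roots of $Q_A(\cdot,z)$ are real for real $z$, so the roots of the factor $R$ are among them. The resultant argument then goes through unchanged.
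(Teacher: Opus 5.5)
Your proof is correct, but it takes a different route from the paper's.

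\emph{The paper's route.} It imports a genericity result from \cite{S1}: there is an open set $\mathcal{O}\subset\R^n$ of real directions $x$ along which each component $\{R_j(1,z)=0\}$ is met in $\deg R_j$ distinct regular points, disjoint for different $j$. From this the paper reads off that, for $x\in\mathcal{O}$, the minimal polynomial of the Hermitian matrix $-A_*(x)$ is exactly $\prod_j R_j(z_0,x)$. Both directions then follow from that single computation, with Lemma \ref{minip} used for the converse.

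\emph{Your route.} You bypass this geometric input completely. In your ``only if'' direction you use only two facts: $R^2S$ and $RS$ have the same $z_0$-roots at every point, and $A_*(z)$ is diagonalizable for every real $z$. In your ``if'' direction you use $q_A\mid Q_A$ (Corollary \ref{q_A}) together with a resultant argument, showing that no irreducible factor of $Q_A$ can fail to divide $q_A$.

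\emph{A simplification.} The hyperbolicity paragraph, and even the restriction to real $z$, are unnecessary in the ``if'' direction. If $v$ is an eigenvector of $-A_*(z)$ with eigenvalue $\mu$, then $q_A(-A_*(z),z)v=q_A(\mu,z)v$ for any $z\in\C^n$. So every root of $Q_A(\cdot,z)$ is a root of $q_A(\cdot,z)$, with no diagonalizability needed. Any complex root of $R(\cdot,z)$, at any $z\in\C^n$ where the resultant is nonzero, then gives the contradiction. Thus that direction holds for arbitrary tuples (compare Corollary \ref{admi1}). Your proof shows the Hermitian hypothesis is needed only for ``minimal $\Rightarrow$ squarefree.''

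\emph{A by-product.} Running your two arguments with $P=\prod_j R_j$ recovers the paper's side result that $q_A=\prod_j R_j$ for every Hermitian tuple. The ``only if'' argument gives $P\in I_A$, hence $q_A\mid P$. The ``if'' argument gives $R_j\mid q_A$ for each $j$.

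\emph{Comparison.} The paper's route is short given \cite{S1}, and it yields the generic simple-spectrum description of $A_*(x)$ directly. Yours is self-contained and isolates exactly where the Hermitian assumption enters.
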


\begin{proof}

 Suppose $Q_A(z_0,z)$ is minimal. We write
\begin{equation}\label{char} Q_A(z_0,z)= \displaystyle \prod_{j=1}^s R_j(z_0,z)^{t_j}, \end{equation}
where $R_j, \ j=1,...,s$ are irreducible polynomials of degrees $l_1,...,l_s$ respectively. Then $l_1t_1+\cdots +l_st_s=k$. We claim that the polynomial 
\begin{equation}\label{min. pol.}
q(z_0,z):= \displaystyle \prod_{j=1}^s R_j(z_0,z) 
\end{equation}
satisfies condition a) of Definition  \ref{minimal}. 

It was shown in \cite{S1} (see also \cite{SY}) that there is some open set $\mathcal{O}\subset \R^n$ such that for every $x\in \mathcal{O}$ the line $\ell_x:=\{\lambda x: \lambda\in \R\}$ intersects each algebraic variety $\{R_j(1,z)=0\} $ at exactly $l_j$ distinct points,  
with each of these points being regular and the derivative of $R_j(1,z)$ in the direction of $\ell_x$ not vanishing. Furthermore, these sets of points are disjoint for different $j$s. We enumerate these intersection points as $\lambda_1(x) x,...,\lambda_r (x)x$, where $r=l_1+\cdots +l_s$ is the degree of $q$ in (\ref{min. pol.}). 

This means that the matrix $-A_*(x)=-(x_1A_1+\cdots +x_nA_n)$ has eigenvalues $1/\lambda_1(x),..., 1/\lambda_r(x)$ of multiplicities $t_1,...,t_s$ respectively. Observe that the characteristic polynomial of $-A_*(x)$ is $\det(z_0+A_*(x))=Q_A(z_0,x)$, considered as a function of $z_0$. Since $\lambda_1,...,\lambda_s$ are real numbers, the pencil $-A_*(x)$ is a Hermitian matrix and hence diagonalizable. Therefore, its minimal polynomial is equal to $q(z_0, x)$. This implies that $q(-A_*(x),x)=0$ for all $x\in \mathcal{O}$. Since $q(-A_*(x),x)$ is an analytic function of $x$, the uniqueness theorem implies that $q(-A_*(z),z)=0$ for all $z\in \C^n$.
Since $Q_A$ is minimal, we must have $Q_A=q$, which has no repeated factors.

On the other hand, if $Q_A$ has no repeated factors, then it is of the form $q$ in (\ref{min. pol.}). The argument above shows that for any $x\in \mathcal{O}$, the matrix $A_*(x)$ has $k$ distinct eigenvalues. It follows that $Q_A(z_0, x)=\det (z_0I+A_*(x))$ is minimal for $-A_*(x)$. Since the degree of $Q_A(z_0, x)$ (as a polynomial in $z_0$) is $k$, Lemma \ref{minip} implies that $Q_A$ is minimal.

\end{proof}

\subsection{A Summary} We wind out this section with a diagram that summarizes the relations among reducibility, cyclicity, and minimality. Consider the following statements:

\begin{itemize}
\item[(a)] \textit{Tuple $A$ is irreducible.}	
\item[(b)] \textit{$Q_A$ is irreducible.}
\item[(c)] \textit{$Q_A$ is minimal. }
\item[(d)] \textit{The tuple $A$ is linearly cyclic.}
\item[(e)] \textit{The algebra $\mathfrak{A}(A)$  is cyclic.}
\end{itemize}

In Fig. 1 below, the fact that Statement (Y) follows from Statement (X) is denoted by (X)$\to $ (Y).
\begin{theorem}
The diagram in Figure 1 holds for every matrix tuple $A$.	
\end{theorem}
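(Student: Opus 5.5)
Since Figure 1 records a finite collection of implications among (a)--(e), the plan is to verify each arrow separately, using results already established in the paper. I expect the arrows to be (b)$\to$(a), (b)$\to$(c), (c)$\leftrightarrow$(d), (d)$\to$(e) and (a)$\to$(e), together with any arrows obtained by composing these. If the figure lists a different set, the same ingredients should cover it.

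\emph{Arrows coming from (b).} For (b)$\to$(a), suppose $A$ has a nontrivial common invariant subspace $M$. With respect to $\C^k=M\oplus M^\perp$ each $A_j$ takes the block upper triangular form (\ref{decom}), so $Q_A=Q_{A'}Q_{A''}$. Both factors are homogeneous of positive degree, which contradicts irreducibility of $Q_A$. For (b)$\to$(c), this is exactly Corollary \ref{admi1}. Alternatively, by Corollary \ref{properties} b) the minimal polynomial $q_{\scalebox{0.6}{A}}$ is a nonconstant factor of the irreducible $Q_A$, so it must equal $Q_A$.

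\emph{Minimality and linear cyclicity.} The equivalence (c)$\leftrightarrow$(d) is Theorem \ref{lcyclic}, so nothing further is needed there.

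\emph{Arrows into (e).} For (d)$\to$(e), choose $w$ such that $A_*(w)$ is cyclic, with cyclic vector $v$. Every power $A_*(w)^j$ lies in $\mathfrak{A}(A)$, so
\[\C^k=\SP\{v, A_*(w)v,\dots,A_*(w)^{k-1}v\}\subseteq \mathfrak{A}(A)v,\]
and hence $v$ is a cyclic vector for $\mathfrak{A}(A)$. For (a)$\to$(e), take any nonzero $v$. The subspace $\mathfrak{A}(A)v$ is invariant under every $A_j$ and contains $v$, so irreducibility of $A$ forces $\mathfrak{A}(A)v=\C^k$. Alternatively, Burnside's theorem gives $\mathfrak{A}(A)=M_k(\C)$, which is clearly cyclic.

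\emph{Expected obstacle.} Every implication above reduces to an earlier result, so I expect the only real work to lie in any arrows the figure marks as failing. For those I would give explicit counterexamples with $2\times 2$ matrices. For instance, $A_1=\diag(1,2)$ and $A_2=0$ is linearly cyclic, since $A_1$ is cyclic and so $Q_A$ is minimal, yet the tuple is reducible; this shows (c), (d) and (e) do not imply (a) or (b). To see that (a) need not imply (b), I would use an irreducible tuple whose characteristic polynomial factors, as in the examples cited via \cite{KV}.
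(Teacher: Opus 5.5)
Your verification of the arrows in Figure 1 is correct and follows the paper. The paper simply cites Theorem \ref{lcyclic} for (c)$\leftrightarrow$(d), the block-triangular factorization $Q_A=Q_{A'}Q_{A''}$ for (b)$\to$(a), and Corollary \ref{admi1} for (b)$\to$(c). It uses Burnside's theorem for (a)$\to$(e), calls (d)$\to$(e) obvious, and (b)$\to$(e) is a composition. Your direct argument for (a)$\to$(e), that $\mathfrak{A}(A)v$ is a nonzero common invariant subspace, is a fine replacement for Burnside.

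The gap is in the second half of the paper's proof, which also shows that no further arrow can be added. Your example $A_1=\diag(1,2)$, $A_2=0$ correctly shows that none of (c), (d), (e) implies (a) or (b). It is even reducible in the unitary sense, unlike the paper's upper-triangular $2\times 2$ example. The \cite{KV} example gives (a)$\nrightarrow$(b). But nothing in your proposal shows (a)$\nrightarrow$(c), so you also lack (a)$\nrightarrow$(d), (e)$\nrightarrow$(c) and (e)$\nrightarrow$(d). This is the one non-implication with real content. It needs an irreducible tuple whose characteristic polynomial is not minimal, i.e.\ a counterexample to Kippenhahn's conjecture, and the $A_6$ example says nothing about minimality.

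The paper uses Waterhouse's pair (Example 8.2). That pair is irreducible, while $Q_A=z_0^r\det(z_0^2-z_1^2S^2-z_2^2T^2)$ has the repeated factor $z_0$. Since the pair is Hermitian, $Q_A$ is therefore not minimal by Theorem \ref{chacteristic minimal}, as the direct computation there showing $A_*(z)$ is never cyclic also confirms. Using this example instead of \cite{KV} gives (a)$\nrightarrow$(b) at the same time. The remaining non-implications then follow by composing with the arrows you proved: for instance, (e)$\nrightarrow$(d) because (a)$\to$(e) and (d)$\to$(c).
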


\begin{proof}
First, the equivalence of (c) and (d) is established in Theorem 5.5. The fact that (b)$\to$ (a) is noted in Sect. 5.1, and that (a)$\to$ (e) is due to Burnside's theorem, which asserts that $\mathfrak{A}(A)=M_k(\C)$ in this case. The fact that (b) $\to$ (c) is Corollary 5.7, and ``(d) $\to$ (e)" is obvious.

To check that no other directed edges could be added to the diagram in Fig. 1, we first verify that (a) does not imply (c), and (c) does not imply (a). The former is shown later in Example 8.2, which is a counterexample to Kippenhahn's conjecture. To check the latter, consider the following matrices and vectors:
\[A_1=\begin{pmatrix} 1 & 0 \\ 0 & 0 \end{pmatrix},\ \ \ A_2=\begin{pmatrix} 0 & 1\\ 0 & 0 \end{pmatrix}, \ \ \ v_1=\begin{pmatrix} 0 \\ 1 \end{pmatrix},\ \ \ v_2=\begin{pmatrix} 1 \\ 0 \end{pmatrix}.\] It is not hard to verify that $\mathfrak{A}(A)=\text{span}\{I, A_1, A_2\}$, and it has cyclic vector $v_1$ and common eigenvector $v_2$. Note that this shows that (e) does not imply (a), and consequently, (e) does not imply (b). Furthermore, in this case $Q_A=z_0(z_0+z_1)$, which is minimal for $(A_1, A_2)$. Hence, (c) does not imply (a). A careful examination shows that these facts imply that (a) does not imply (b), (c) does not imply (b), and finally (e) does not imply (d).

\end{proof}

\begin{figure}[htbp]
    \centering
    \begin{tikzpicture}[>=Stealth, node distance=2.5cm, main/.style = {draw, circle, font=\large, minimum size=8mm}]
        \node[main] (b) at (0, 4) {b};
        \node[main] (c) at (5, 4) {c};
        \node[main] (a) at (2.5, 2) {a};
        \node[main] (e) at (0, 0) {e};
        \node[main] (d) at (5, 0) {d};
        
        \path[->, thick]
            (b) edge (e)
            (b) edge (c)
            (b) edge (a)
            (a) edge (e)
            (d) edge (e)
            (d) edge [bend left=12] node {} (c)
            (c) edge [bend left=12] node {} (d);
    \end{tikzpicture}
    \caption{A summary of relations}
\end{figure}

There is also no link between Statements (a) and (c) for group representations.
\begin{example}\normalfont
Consider
the following two irreducible representations of the Dihedral group $D_5$:
\begin{eqnarray*}
&\rho_1(g_1)=\left [ \begin{array}{cc} 1&0\\0&-1\end{array} \right ]	, \ \rho_1(g_2)=\left [\begin{array}{cc} \cos \ \frac{2\pi }{5}& \sin \ \frac{2\pi }{5}\\ \sin \ \frac{2\pi }{5} & -\cos \ \frac{2\pi }{5} \end{array} \right ] \\
& \rho_2(g_1)=\left [ \begin{array}{cc} 1&0\\0&-1\end{array} \right ],  \ \rho_2(g_2)=\left [\begin{array}{cc} \cos \ \frac{4\pi }{5}& \sin \ \frac{4\pi }{5}\\ \sin \ \frac{4\pi }{5} & -\cos \ \frac{4\pi }{5} \end{array} \right ]. 
\end{eqnarray*}
Write $G_1=(\rho_1(g_1),\rho_1(g_2)), \  G_2=(\rho_2(g_1),\rho_2(g_2)) $.  A simple computation (see \cite{CST} for example) shows that 
\begin{eqnarray*}
&Q_{G_1}(z_0,z_1,z_2)=z_0^2-z_1^2-2\cos \frac{2\pi }{5}z_1z_2-z_2^2, \\
&Q_{G_1}(z_0,z_1,z_2)=z_0^2-z_1^2-2\cos \frac{4\pi }{5}z_1z_2-z_2^2.	
\end{eqnarray*}
Let $A=(A_1,A_2)$ be the direct sum of $G_1$ and $G_2$,
\small\begin{equation}\label{from dihedral}
A_1=\left [ \begin{array}{cccc} 1& 0&0&0 \\ 0&-1&0&0\\0&0&1&0 \\0&0&0&-1\end{array}\right ], \ A_2=\left [ \begin{array}{cccc} \cos \frac{2\pi }{5}& \sin \frac{2\pi }{5}&0&0 \\ \sin \frac{2\pi}{5}&-\cos \frac{2\pi }{5}&0&0\\0&0&\cos \frac{4\pi }{5}&\sin \frac{4\pi }{5} \\0&0&\sin \frac{4\pi }{5}&-\cos \frac{4\pi }{5}\end{array}\right ].
 \end{equation}\normalsize
 Then the characteristic polynomial
 $$Q_A(z_0,z_1,z_2)=\big(z_0^2-z_1^2-2\cos \frac{2\pi }{5}z_1z_2-z_2^2\big)\big(z_0^2-z_1^2-2\cos \frac{4\pi }{5}z_1z_2-z_2^2\big) $$
 has 2 factors, each of multiplicity 1, so it is minimal by Theorem \ref{chacteristic minimal} despite the fact that $(A_1,A_2)$ is reducible. 
 \end{example}

\section{Tuple Extensions and Spectral Indices}\label{exten}

As noted earlier, the reducibility of matrix tuples is a delicate matter, and it cannot be determined by their characteristic polynomials alone. This section addresses this issue by considering extensions of tuples. Given any tuple $A=(A_1, ..., A_n)$, tuples of the form $\widetilde{A}=(A_1, ..., A_N)$, where $N>n$ and $A_{n+1}, ..., A_N\in \mathfrak{A}(A)$, are called {\em algebraic extensions} of $A$. In the case $A$ is irreducible, Burnside's theorem shows that $\mathfrak{A}(A)=M_k(\C)$, and hence every extension of the tuple $A$ is algebraic. In this case, assuming $A_1, ..., A_n$ are linearly independent, there exist matrices $A_{n+1}, ... A_{k^2}$ such that 
span$\{A_1, ..., A_{k^2}\}=M_k(\C)$. An arbitrary matrix $W=(w_{ij})\in M_k(\C)$ can be written as a pencil of elementary matrices: $W=\sum_{i,j=1}^kw_{ij}E_{ij}$. It is well-known that $\det W$ is an irreducible polynomial in the variables $w_{ij}$. Setting $\widetilde{A}=(A_1, ..., A_{k^2})$, then $Q_{\widetilde{A}}(z_0,z)$ is an irreducible polynomial in $z_0, ..., z_{k^2}$ because it differs from $\det W$ by a linear change of variables. The link between the irreducibility of tuple $A$ and the irreducibility of extended characteristic polynomial $Q_{\widetilde{A}}(z_0,z)$ is the focus of this section.

\subsection{Spectral Indices}

The following equivalence relation was first introduced in \cite{AY} for finite dimensional Lie algebras.

\begin{definition}
    We say that two tuples of matrices $A=(A_1, ..., A_n)$ and $B=(B_1, ..., B_n)$ are {\em spectrally equivalent} if there exists a matrix $C\in GL(n,\C)$ such that $Q_{B}(z_0,z)=Q_A(z_0, zC), z\in \C^n$.
\end{definition}

For general matrix tuples, spectral equivalence is much weaker than similarity or unitary equivalence. In particular, if the tuples $A$ and $B$  consist of strictly upper triangular matrices, then $Q_A=Q_B=z_0^k$, which yields no information about the connection between the tuples. On the other hand, if the tuples consist of bases for simple Lie algebras, then spectral equivalence implies the isomorphism of the two Lie algebras \cite{FLW,GLW,Ya24}. The following definitions capture important features of $A$ and $Q_A$.

\begin{definition}\label{s-index}
Let $A$ be a matrix tuple and assume the irreducible factorization of $Q_A$ is expressed as in \eqref{char}.

a) The {\em spectral index} of $A$ is defined as $\kappa (A)=s$.

b) The {\em spectral multiplicity} of $A$ is defined as $\nu(A)=t_1+\cdots +t_s$.
\end{definition}
\noindent Thus $\kappa(A)$ is the number of distinct irreducible factors of $Q_A$. It is clear that if tuples $A$ and $B$ are spectrally equivalent, then $\kappa(A)=\kappa(B)$ and $\nu(A)=\nu(B)$, showing that both numbers are invariant with respect to spectral equivalence. Some basic properties of $\kappa$ and $\nu$ are listed below.
\begin{itemize}
\item If $A$ is a tuple of $k\times k$ matrices, then evidently $\kappa (A)\leq \nu(A)\leq k$.
\item $\kappa(A)=k$ if and only if $Q_A$ is a product of $k$ distinct linear factors. 
\item If $A$ is a commuting tuple, then it is similar to a tuple of upper triangular matrices. Hence $Q_A$ is a product of linear factors and therefore $\nu(A)=k$.
\item $\kappa(A)=1$ if and only if $Q_A=R^d$ for some integer $d\geq 1$ and irreducible polynomial $R$.
\item $\nu(A)=1$ if and only if $Q_A$ is irreducible.
\item $\kappa(A)=\nu(A)$ if and only if $Q_A$ has no repeated factors. 
\end{itemize}

\begin{example}
If $G=\{1, g_1, ..., g_n\}$ is a finite group, then Theorem \ref{Frob} shows that \[\kappa (\lambda(\bar{g}))=|\hat{G}|,\hspace{2cm} \nu(\lambda(\bar{g}))=\sum_{[\pi]\in \hat{G}}d_\pi.\]
\end{example}

The reducibility of a general matrix tuple $A$ often cannot be derived from the factorization of $Q_A$ (Example 8.2). This motivates our interest on extended tuples, and the following notion is critical.
\begin{definition}\label{lowerindex}
For a matrix $n$-tuple $A$, its {\em lower spectral index} is defined as 
\begin{align*}
\kappa_*(A)=\min \{\kappa(A_1, ..., A_N)\mid N> n, \ \ A_{n+1}, ..., A_N\in \mathfrak{A}(A)\}.
\end{align*}
Further, if $A$ is Hermitian, we require that $A_{n+1}, ..., A_N$ all be Hermitian.
\end{definition}
Since $\kappa (A)=\kappa (A_1, ..., A_n, 0)$, it is obvious that $\kappa_*(A)\leq \kappa(A)$. In the case $\kappa(A)=\kappa_*(A)$, we say that tuple $A$ is {\em saturated}. For a non-saturated tuple $A$, it would be meaningful to determine the the least integers $N$ (i.e. the shortest algebraic extension of $A$) such that $\kappa(A_1, ..., A_N)=\kappa_*(A)$.

Hilbert's irreducibility theorem asserts that if $f(t, x)$ is an irreducible polynomial in the polynomial ring $\mathbb{Q}[t,x]$, then there exist infinitely many rationals $t_0$ such that $f(t_0,x)$ is irreducible over ${\mathbb Q}$. This theorem has been strengthened and extended to polynomials in several variables over any number field \cite[Ch. 9]{Lan}. In particular, if $f(z_0, ..., z_n, z_{n+1})$ is a complex irreducible polynomial, then $f(z_0, ..., z_n, w)$ is irreducible as a polynomial in $z_0, ..., z_n$ except for a finite number of complex numbers $w$. This enables us to prove the following fact.

\begin{proposition}\label{N*}
For any tuple $A$, there exists a matrix $B\in \mathfrak{A}(A)$ such that $\kappa_*(A)=\kappa(A_1, ..., A_n, B)$. Furthermore, if $A$ is a Hermitian tuple, then $B$ can be chosen to be Hermitian.    \end{proposition}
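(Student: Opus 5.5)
Since $\kappa$ takes values in the positive integers, the minimum defining $\kappa_*(A)$ is attained by some algebraic extension $\widetilde{A}=(A_1,\dots,A_N)$ with $A_{n+1},\dots,A_N\in\mathfrak{A}(A)$, Hermitian in the Hermitian case. The plan is to compress the added matrices into a single generic linear combination
\[
B=w_{n+1}A_{n+1}+\cdots+w_NA_N ,
\]
and show that for suitable $w$ the tuple $(A_1,\dots,A_n,B)$ still has exactly $\kappa_*(A)$ distinct irreducible factors. Since $B\in\mathfrak{A}(A)$, we get $\kappa(A_1,\dots,A_n,B)\geq\kappa_*(A)$ for free, so only the reverse inequality needs work. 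If $A$ is Hermitian, we take the $w_j$ real, which makes $B$ Hermitian.

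First, observe that $Q_{(A,B)}(z_0,z,t)=Q_{\widetilde{A}}(z_0,z,tw_{n+1},\dots,tw_N)$. Write the factorization
\[
Q_{\widetilde{A}}=\prod_{j=1}^s R_j^{t_j}, \qquad s=\kappa_*(A),
\]
with the $R_j$ distinct irreducible polynomials. Substituting $(z_{n+1},\dots,z_N)=tw$, each $R_j$ becomes a polynomial $R_j^w(z_0,z,t)$. It then suffices to choose $w$ so that two conditions hold.

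\begin{itemize}
\item[(i)] Each $R_j^w$ is irreducible.
\item[(ii)] For $i\neq j$, $R_i^w$ and $R_j^w$ are not scalar multiples of each other.
\end{itemize}

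Under (i) and (ii), $Q_{(A,B)}$ has exactly $s$ distinct irreducible factors, which gives the desired equality.

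For (i), we reduce the variables one at a time. To go from $N$ to $N-1$, substitute $z_N=\alpha z_{N-1}$. Equivalently, apply the linear change of variables $z_N\mapsto z_N+\alpha z_{N-1}$, which preserves irreducibility, and then set $z_N=0$. The Hilbert irreducibility theorem as quoted (a complex irreducible $f(\dots,w)$ stays irreducible for all but finitely many $w$) is only stated at a specialization of a last variable to a constant. Applied naively it would give $z_N=c$ constant, not a homogeneous substitution. The cleaner route is to dehomogenize or work with the homogeneous form: specializing $z_{N}=c\,z_{N-1}$ in a homogeneous polynomial corresponds, after setting $z_{N-1}=1$, to specializing a variable to a constant. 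For homogeneous $R$, irreducibility is equivalent to irreducibility of this dehomogenization, provided $R$ is not divisible by $z_{N-1}$. That divisibility can be avoided by a preliminary generic linear change of coordinates in $(z_{n+1},\dots,z_N)$. Each such step excludes only finitely many values of the constant, and finitely many factors $R_j$ are involved. Iterating, a generic $w$ (in particular a real one, since the excluded sets are finite at each stage) keeps every $R_j^w$ irreducible.

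For (ii), $R_i^w=cR_j^w$ is a polynomial identity in $w$ for each coefficient. If it held for $w$ in a Zariski-dense set, then, by homogeneity in the last block, $R_i=cR_j$ on all of $\C^{N+1}$, which contradicts their distinctness. Hence it holds only on a proper algebraic subset, which is avoidable, also by real $w$. The main obstacle is the careful bookkeeping in (i): justifying the passage from Hilbert's theorem on constant specialization to homogeneous substitution, and handling the degenerate cases where a factor is independent of or divisible by the eliminated variable. I would handle these by the generic coordinate change described above before each step.
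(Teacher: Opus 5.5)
\textbf{Comparison.} Your route is the paper's own. You take a minimizing extension $\widetilde A$, compress the added matrices into $B=\sum_{j>n}w_jA_j$, specialize variables one at a time by ``Hilbert irreducibility'', and use real $w$ in the Hermitian case. Your handling of homogeneity is more careful than the paper's remark that ``the presence of $z_{n+1}$ does not change the irreducibility''. The non-divisibility by $z_{N-1}$ that you want to arrange is in fact automatic: $Q_{\widetilde A}(z_0,0)=z_0^k$, so every factor contains a pure power of $z_0$. Condition (ii) is superfluous. Once (i) holds, $\kappa(A_1,\dots,A_n,B)\le s$, and the reverse inequality is minimality.

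\textbf{The gap.} The genuine gap is in (i), and it is not bookkeeping. The ``complex Hilbert irreducibility theorem'' that you (and the paper's proof) invoke is false, because $\C$ is not a Hilbertian field. For instance, $f=x^2-wy^2$ is irreducible in $\C[x,y,w]$ (it is linear in $w$ with coprime coefficients). Yet $f(x,y,w_0)=(x-\sqrt{w_0}\,y)(x+\sqrt{w_0}\,y)$ for every $w_0\in\C$.

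The failure persists in exactly your homogeneous, monic-in-$z_0$ setting. Take $n=2$, $N=4$, and
\[
R=z_0^6-z_1^2z_2^2z_3z_4 .
\]
This is irreducible, since it is linear in $z_4$ with coprime coefficients. On every line $(z_3,z_4)=t(w_3,w_4)$ it becomes $(z_0^3-\gamma t z_1z_2)(z_0^3+\gamma t z_1z_2)$ with $\gamma^2=w_3w_4$. No preliminary linear change of coordinates in $(z_3,z_4)$ helps, because $z_3z_4$ restricted to any line through the origin is a constant times $t^2$. So your step ``each stage excludes only finitely many values'' has no valid justification.

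\textbf{What a repair needs.} What does hold over $\C$ is the Bertini--Krull theorem. If $f(x;w)$ is irreducible in $\overline{\C(w)}[x]$, i.e.\ absolutely irreducible over $\C(w)$, then $f(x;w_0)$ is irreducible for all $w_0$ outside a proper Zariski-closed set. To close the gap you must show that each factor $\widetilde R_j$ stays irreducible as a polynomial in $z_0,\dots,z_n$ over $\overline{\C(z_{n+1},\dots,z_N)}$, after dehomogenizing in the extra block. Equivalently, the generic fiber of the projection from $\{z_{n+1}=\cdots=z_N=0\}$ restricted to $\{\widetilde R_j=0\}$ must be geometrically irreducible. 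Alternatively, you need a different argument that exploits the determinantal structure of $Q_{\widetilde A}$ or the freedom in choosing the minimizing extension. Neither your proposal nor the paper's proof supplies this. Once it is supplied, the Hermitian case goes through as you say, since $\R^{N-n}$ is Zariski dense in $\C^{N-n}$.
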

    
\begin{proof}
Given any tuple $A$, if it is saturated, i.e., $\kappa (A)=\kappa_*(A)$, then we can simply let $B=0$, and there is nothing more to prove. Suppose $\kappa (A)>\kappa_*(A)$. Then there exist $A_{n+1}, ..., A_N\in \mathfrak{A}(A)$ such that $\kappa_*(A)=\kappa (\widetilde{A})$, where $\widetilde{A}=(A_1, ..., A_N)$. For example, we can choose $A_{n+1}, ..., A_N$ such that $\{A_1, ..., A_N\}$ is a basis for $\mathfrak{A}(A)$. Let
\[Q_{\widetilde{A}}(z_0, ..., z_N)=\prod_{j=1}^s\big(\widetilde{R}_j(z_0, ..., z_N)\big)^{t_j},\]
be the irreducible factorization of $Q_{\widetilde{A}}$, where $s=\kappa_*(A)$. Then Hilbert's irreducibility theorem ensures the existence of a fixed number $w_N\in \C$ such that the $N$-variable polynomial $\widetilde{R}_j(z_0, ..., z_{N-1},w_N)$ is irreducible for each $j$. Repeating this process, we obtain fixed complex numbers $w_{n+1}, ..., w_N$ such that the $(n+1)$-variable polynomial $\widetilde{R}_j(z_0, ..., z_n, w_{n+1}, ...,w_N)$ is irreducible for each $j$. Setting $B=w_{n+1}A_{n+1}+\cdots +w_NA_N$, we have
\begin{align*}
    \det(z_0I+z_1A_1+\cdots +z_{n+1}B)=\prod_{j=1}^s\big(\widetilde{R}_j(z_0, ...,z_n, z_{n+1}w_{n+1}, ..., z_{n+1}w_N)\big)^{t_j},\end{align*}
which is an irreducible factorization. Note that the presence of $z_{n+1}$ in $\widetilde{R}_j$ above does not change the irreducibility. Therefore, we have $\kappa (A_1, ..., A_n, B)=s$. 

In the case $A$ is Hermitian, it can be extended to a basis for $\mathfrak{A}(A)$ consisting of Hermitian matrices. The claim then follows from the proof above by choosing real numbers $w_{n+1}, ..., w_{N}$.
\end{proof}

\vspace{.1cm}

\begin{remark}\label{AM1} \normalfont The proof of Proposition \ref{N*} amounts to construct a basis for the algebra $\mathfrak{A}(A)$. In general, this is a rather challenging task. 
\end{remark}

\subsection{Irreducible Tuples} Recall that Burnside's theorem asserts that tuple $A$ is irreducible if and only if ${\mathfrak A}(A)=M_k(\C)$. Combined with Proposition \ref{N*}, we arrive at the following theorem.

\begin{theorem}\label{mainthm}
 A tuple $A=(A_1, ..., A_n)$ is irreducible if and only if there exists an algebraic extension $\widetilde{A}=(A_1, ..., A_n, B)$ such that $Q_{\widetilde{A}}$ is irreducible. 
\end{theorem}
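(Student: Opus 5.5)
The plan is to prove the two implications separately. The ``if'' direction follows from the observation in Section \ref{min}; the ``only if'' direction combines Burnside's theorem, the irreducibility of the generic determinant, and the Hilbert-irreducibility reduction used in Proposition \ref{N*}.

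For sufficiency, suppose $\widetilde{A}=(A_1,\dots,A_n,B)$ with $B\in\mathfrak{A}(A)$ has $Q_{\widetilde{A}}$ irreducible. Assume, for contradiction, that $A$ has a nontrivial common invariant subspace $M$. Every element of $\mathfrak{A}(A)$ is a polynomial in $I,A_1,\dots,A_n$, so $B$ also leaves $M$ invariant. Relative to $\C^k=M\oplus M^\perp$, every entry of $\widetilde{A}$ then has the block upper triangular form \eqref{decom}. This gives $Q_{\widetilde{A}}=Q_{\widetilde{A}'}Q_{\widetilde{A}''}$, where the two factors are monic in $z_0$ of degrees $\dim M$ and $k-\dim M$, both positive. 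That contradicts irreducibility, so $A$ is irreducible.

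For necessity, suppose $A$ is irreducible. Burnside's theorem gives $\mathfrak{A}(A)=M_k(\C)$. Extend $A_1,\dots,A_n$ to a spanning family $A_1,\dots,A_N$ of $M_k(\C)$, for instance by appending the elementary matrices $E_{ij}$. These lie in $\mathfrak{A}(A)$, so this is an algebraic extension. The pencil $z_0I+\sum_{j\le N} z_jA_j$ is then a surjective linear map from $\C^{N+1}$ onto $M_k(\C)$. Choose coordinates $(z_0,\dots,z_N)=(u,v)$ with $u\in\C^{k^2}$ mapped isomorphically onto $M_k(\C)$ and $v$ spanning the kernel; this can be arranged so the pencil depends only on $u$. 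Then $Q_{\widetilde A}(u,v)=\det W(u)$ with $W$ a linear isomorphism, and since $\det$ is irreducible in the $k^2$ entries, $Q_{(A_1,\dots,A_N)}$ is irreducible. Equivalently, $\kappa=\nu=1$ for this extension, which is the remark at the beginning of Section \ref{exten}.

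Then I apply the argument of Proposition \ref{N*} with $s=1$ and $t_1=1$. Using Hilbert's irreducibility theorem in the form quoted above, I fix $w_N,w_{N-1},\dots,w_{n+1}$ one at a time, avoiding finitely many exceptional values each time, so that $R(z_0,\dots,z_n,w_{n+1},\dots,w_N)$ stays irreducible. Setting $B=\sum_{j>n}w_jA_j$, I obtain
\[Q_{(A,B)}(z_0,\dots,z_{n+1})=Q_{(A_1,\dots,A_N)}(z_0,\dots,z_n,z_{n+1}w_{n+1},\dots,z_{n+1}w_N).\]
It remains to check that this polynomial is irreducible.

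This last check is the main obstacle. Irreducibility of the specialization at $z_{n+1}=1$ must lift to irreducibility of the polynomial that is homogeneous in $(z_0,\dots,z_{n+1})$. I would argue by homogeneity: a homogeneous polynomial factors only into homogeneous factors. The polynomial is monic of degree $k$ in $z_0$, so any factorization has factors of positive $z_0$-degree. Setting $z_{n+1}=1$ in such a factorization gives two factors that are still nonconstant, which contradicts irreducibility at $z_{n+1}=1$. A subtle point is that the quoted Hilbert statement concerns fixing one variable at a time in an irreducible polynomial. To keep that valid, I need $R$ after each substitution to remain irreducible, and if necessary to remain non-constant in the next variable. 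For the step yielding $B$, I would also check that the result does not collapse, which the monicity in $z_0$ guarantees.
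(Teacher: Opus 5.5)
Your route is the paper's own: Burnside's theorem, then a spanning algebraic extension whose characteristic polynomial is the generic determinant after a linear change of variables, then the specialization from the proof of Proposition~\ref{N*}. Your $(u,v)$ splitting handles linearly dependent $A_i$ more cleanly than the paper's basis assumption. The sufficiency direction is fine, and so is the passage from $z_{n+1}=1$ back to the homogeneous polynomial; that step is routine, not the main obstacle.

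The genuine gap is the step you take on trust. Choosing $w_N,\dots,w_{n+1}$ ``avoiding finitely many exceptional values'' relies on the quoted form of Hilbert's irreducibility theorem, and over $\C$ that form is false. For example, $f=z_0^2-z_{n+1}z_1^2$ is irreducible in $\C[z_0,\dots,z_{n+1}]$, since it is linear in $z_{n+1}$ with coprime coefficients. Yet $f(z_0,\dots,z_n,w)=(z_0-\sqrt{w}\,z_1)(z_0+\sqrt{w}\,z_1)$ for every $w\in\C$. Over an algebraically closed field, the valid statement (Bertini--Krull, Bertini--Noether) needs irreducibility over an algebraic closure of $\C(z_{n+1})$, and nothing in your argument checks this. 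Your first substitution is safe: for $w_N\neq 0$ it is just a dehomogenization of an irreducible form not divisible by $z_N$. Every later substitution specializes an inhomogeneous polynomial, and there the quoted statement guarantees nothing. The paper's proof of Proposition~\ref{N*} has the same defect, so as written the existence of $B$ is not established.

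One way to close the gap, which also makes the large extension unnecessary, is to show directly that $f_B=\det(z_0I+B+\sum_{i\le n}z_iA_i)$ is irreducible for generic $B\in M_k(\C)=\mathfrak{A}(A)$. Your homogeneity argument then transfers this to $Q_{(A,B)}$. A sketch:

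Treat the entries $b$ of $B$ as indeterminates. Since $B+\sum z_iA_i$ is again a generic matrix, the Galois group of $f$ in $z_0$ over $\C(b,z)$ is $S_k$. Because $\overline{\C(b)}(z)$ is Galois over $\C(b,z)$, the group over $\overline{\C(b)}(z)$ is a normal subgroup of $S_k$, hence transitive unless trivial.

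Suppose it were trivial. Then the roots lie in $\overline{\C(b)}[z]$. Comparing degrees in $f(\rho)=0$, where the $z_0^{k-m}$ coefficient has degree at most $m$ in $z$, shows each root has degree at most $1$ in $z$. So $f=\prod_j(z_0+\beta_j+\ell_j(z))$, where the $-\beta_j$ are the eigenvalues of $B$ and $\prod_j(z_0+\ell_j)=Q_A$; the latter forces $\ell_j\in\C[z]$. The symmetric-group action of $\mathrm{Gal}(\overline{\C(b)}/\C(b))$ on the $\beta_j$ then forces all $\ell_j$ to equal a single linear form $\ell$. Consequently $N(z)=\sum z_iA_i-\ell(z)I$ satisfies $\det(yI+B+N)=\det(yI+B)$ for every $B$. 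Comparing the $y^{k-1}$ and $y^{k-2}$ coefficients gives $\tr N=0$ and $\tr(BN)=-\tfrac12\tr(N^2)$ for all $B$, so $N=0$. Then every $A_i$ is scalar, which is impossible for irreducible $A$ with $k\ge 2$.

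Hence $f$ is absolutely irreducible. Bertini--Noether then gives a nonempty Zariski-open set of $B$ for which $f_B$, and therefore $Q_{(A,B)}$, is irreducible.
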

\begin{proof}
If $A$ is irreducible, then ${\mathfrak A}(A)=M_k(\C)$, and there exist matrices $A_{n+1}, ..., A_{k^2}\in {\mathfrak A}(A)$ such that $\{A_1, ..., A_{k^2}\}$ is a basis for $M_k(\C)$. The remark above Definition \ref{lowerindex} implies that $Q_{\widetilde{A}}$ is irreducible, where 
$\widetilde{A}=(A_1, ..., A_{k^2})$. The existence of such matrix $B$ then follows from the proof of Proposition \ref{N*}.

On the other hand, if there exists an algebraic extension $\widetilde{A}=(A_1, ..., A_n, B)$ such that $Q_{\widetilde{A}}$ is irreducible, then obviously $\widetilde{A}$ is irreducible. Since $B\in \mathfrak{A}(A)$, tuple $A$ must be irreducible.
\end{proof}

It is meaningful to explore applications of Theorem \ref{mainthm} to representation theory. A finite group $G=\langle g_1, ..., g_n\rangle$ is a basis for its group algebra $\C[G]$. 
 If $\pi$ is a $k$-dimensional representation, then $\pi(G)$ spans $\pi(\C[G])$. It is obvious that $\mathfrak{A}(\pi(\bar{g}))=\pi(\C[G])$. Setting $N=\dim \pi(\C[G])$ and assuming the set $\{\pi(g_1), ..., \pi(g_n)\}$ is linearly independent, then there are elements $g_{n+1}, ..., g_N\in G$ such that $\{\pi(g_1), ..., \pi(g_N)\}$ is a basis for $\pi(\C[G])$. We denote the tuple $(\pi(g_1), ..., \pi(g_N))$ by $\widetilde{\pi(\bar{g})}$. The next corollary is an immediate consequence of the proof of Proposition \ref{N*}.

\begin{corollary}
Given the notations above, there are elements $g_{n+1}, ..., g_N\in G$ such that the spectral index $\kappa(\widetilde{\pi(\bar{g})})=\kappa_*(\pi(\bar{g}))$.
\end{corollary}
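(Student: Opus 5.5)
The natural route is to run the proof of Proposition \ref{N*} again, keeping every extending matrix of the form $\pi(g)$ with $g\in G$ rather than an arbitrary linear combination. I read the statement as follows: for some choice of $g_{n+1},\dots,g_N\in G$ making $\{\pi(g_1),\dots,\pi(g_N)\}$ a basis of $\pi(\C[G])$, we have $\kappa(\widetilde{\pi(\bar g)})=\kappa_*(\pi(\bar g))$. Since $\mathfrak{A}(\pi(\bar g))=\pi(\C[G])$ and $\pi(G)$ spans this algebra, I first pick $g_{n+1},\dots,g_N$ so that the $\pi(g_j)$ extend $\{\pi(g_1),\dots,\pi(g_n)\}$ to a basis. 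Write $\widetilde{A}$ for the resulting tuple.

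The main step is to show that $\kappa(\widetilde A)\le \kappa(A_1,\dots,A_n,C_{n+1},\dots,C_M)$ for every algebraic extension by elements $C_i\in\mathfrak{A}(A)$. This gives $\kappa(\widetilde A)\le\kappa_*$.

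1. Each $C_i$ is a linear combination $\sum_j c_{ij}\pi(g_j)$ of the basis. Hence
\[Q_{(A,C)}(z_0,z,u)=Q_{\widetilde A}(z_0,zL(u))\]
for a linear map $L$ in the variables. This map leaves $z_1,\dots,z_n$ essentially unchanged and sends $(z,u)$ to the basis coordinates. In other words, $Q_{(A,C)}$ is obtained from $Q_{\widetilde A}$ by a linear substitution of variables.

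2. Take the irreducible factorization $Q_{\widetilde A}=\prod_{j=1}^s \widetilde R_j^{t_j}$ and substitute. Each $\widetilde R_j$ becomes a product of irreducibles. So the number of distinct irreducible factors of $Q_{(A,C)}$ is at least the number of distinct substituted $\widetilde R_j$'s.

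3. The delicate point is that two distinct $\widetilde R_j$ might become equal, or share a factor, after substitution. This would lower the count.

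This third point is the main obstacle. The paper's remark that $\det W$ is irreducible suggests handling it by passing to the full basis, where distinct factors are distinct functions of independent coordinates. I would try the following argument. The substituted polynomials are restrictions of the $\widetilde R_j$ to a linear subspace $L$ of coefficient space. Any common factor would give a common zero set of codimension one inside $L$. Restricting further to $z_{n+1}=\dots=0$ recovers $Q_A$. I would then argue that the irreducible factors of $Q_A$ correspond exactly to the joint spectral data of $\mathfrak{A}(A)$ acting on composition factors.

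In fact I would identify $\kappa(\widetilde A)$ structurally. Take a composition series of $\C^k$ under $\mathfrak{A}(A)$. Each composition factor $V_i$ gives an irreducible representation. By Theorem \ref{mainthm} applied to the full basis, the block $Q_{\widetilde A|V_i}$ is irreducible, namely a determinant of generic matrices up to a linear change of variables. Two blocks give the same factor exactly when the corresponding quotient representations are isomorphic. So $\kappa(\widetilde A)$ equals the number of nonisomorphic composition factors.

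For any extension, the characteristic polynomial factors into the blocks $Q_{(A,C)|V_i}$. Blocks coming from nonisomorphic factors should remain distinct by the same identification. This gives the lower bound $\kappa\ge\kappa(\widetilde A)$, and therefore $\kappa(\widetilde A)=\kappa_*(\pi(\bar g))$.
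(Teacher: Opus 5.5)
You follow the paper's route exactly. The paper proves this corollary by pointing to the proof of Proposition~\ref{N*}, which only asserts that a basis extension realizes $\kappa_*$ (``For example, we can choose $A_{n+1},\dots,A_N$ such that $\{A_1,\dots,A_N\}$ is a basis for $\mathfrak{A}(A)$''). Your identification of $\kappa(\widetilde A)$ with the number of isomorphism classes of composition factors is correct. The argument breaks at the closing step, ``blocks coming from nonisomorphic factors should remain distinct by the same identification,'' and it cannot be repaired there. That identification needs the extended tuple to span $\mathfrak{A}(A)$. Only then is $\mathfrak{A}(A)\to M_{d_i}\oplus M_{d_j}$ onto, so that the two block determinants depend on independent coordinates. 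For an extension spanning less (already for $A$ itself), two nonisomorphic composition factors can have identical characteristic polynomials. So the inequality $\kappa(\widetilde A)\le\kappa(A,C)$ you need is false.

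Here is a concrete counterexample. Let $D=\diag(1,2,3)$ and
\[B=\begin{pmatrix}0&1&2\\2&0&1\\1&2&0\end{pmatrix},\qquad A=(D\oplus D,\ B\oplus B^{T}).\]

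\emph{The blocks.} Both $(D,B)$ and $(D,B^T)$ are irreducible, because $D$ has distinct eigenvalues and all off-diagonal entries of $B$ are nonzero. They are not similar. A similarity $P$ must commute with $D$, hence be diagonal. Then $PBP^{-1}=B^T$ forces $b_{12}b_{23}b_{31}=b_{21}b_{32}b_{13}$, i.e.\ $1=8$.

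\emph{Basis extensions have $\kappa=2$.} By Burnside on each block and Skolem--Noether, $\mathfrak{A}(A)=M_3(\C)\oplus M_3(\C)$. Every basis extension therefore has characteristic polynomial $\det(z_0I+X)\det(z_0I+Y)$, with $(X,Y)$ ranging linearly over $M_3\oplus M_3$, so $\kappa=2$.

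\emph{But $\kappa(A)=1$.} Since $\det(z_0I+z_1D+z_2B^T)=\det\big((z_0I+z_1D+z_2B)^T\big)$, we get $Q_A=Q_{(D,B)}^2$. The cubic $Q_{(D,B)}$ is irreducible. A linear factor would have the form $z_0+az_1+bz_2$ with $a\in\{1,2,3\}$. The coefficient of $z_1^2z_2$ in $\det\big(z_1(D-aI)+z_2(B-bI)\big)$ forces $b=0$. With $b=0$ this determinant equals $-6z_1z_2^2+9z_2^3$, $9z_2^3$, $6z_1z_2^2+9z_2^3$ for $a=1,2,3$, none of which vanishes identically. Hence $\kappa_*(A)\le\kappa(A)=1<2$.

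\emph{The group case.} The same mechanism defeats the corollary itself, read (as you and the paper do) with $\widetilde{\pi(\bar g)}$ a basis tuple. Take $G$ generated by involutions $g_1,\dots,g_n$ and an irreducible unitary $\rho$ with $\rho\not\cong\bar\rho$, for example a $3$-dimensional representation of $\mathrm{PSL}(2,7)$. Each $\rho(g_j)$ is Hermitian, so $\bar\rho(g_j)=\rho(g_j)^T$. Set $\pi=\rho\oplus\bar\rho$ and adjoin the Hermitian elements $\pi(h)+\pi(h^{-1})$, $h\in G$. In every element of this extended tuple the second block is the transpose of the first. Because $\rho(\R[G])=M_d(\C)$ for representations of complex type, the first blocks span $M_d(\C)$, and this extension has $\kappa=1$. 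On the other hand, any group-element basis of $\pi(\C[G])=M_d\oplus M_d$ gives $\kappa=2$. So the statement as given cannot be proved along this route, and it fails in these cases.
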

In the case $\pi$ is an irreducible representation of dimension $k$, Burnside's theorem implies that $\pi(\C[G])=M_k(\C)$, and hence there are elements $g_{n+1}, ..., g_{k^2}\in G$ such that the characteristic polynomial of the tuple $(\pi(g_1), ..., \pi(g_{k^2}))$ is irreducible, i.e., $\kappa_*(\pi(\bar{g}))=1$. This is a part of Frobenius' theorem (Theorem \ref{Frob}). However, the characteristic polynomial of $(\pi(g_1), ..., \pi(g_n))$ is not necessarily irreducible, as shown in \cite{KV} for the alternating group $A_6$. In light of Theorem \ref{mainthm}, the following problem thus seems appealing. 

\begin{problem}
Let $\pi$ be a finite dimensional irreducible representation of $G$.

a) Must there be an element $g_{n+1}\in G$ such that the characteristic polynomial of $(\pi(g_1), ..., \pi(g_{n+1}))$ is irreducible?

b) If the answer to (a) is yes, what is the minimal word length of such $g_{n+1}$?
\end{problem}

To conclude this section, we note as in Remark \ref{AM1} that a construction of $B$ that appeared in Proposition \ref{N*} and Theorem \ref{mainthm} can be challenging. So, in practice, it might be rather difficult to make a decision about the reducibility of a tuple by finding this extension matrix $B$. Fortunately, for Hermitian tuples, the issue of reducibility can be resolved in a different approach. In section \ref{characteristic graph}, we will present an algorithm for constructing algebraic extensions for this purpose. This algorithm leads to a necessary and sufficient condition for the validity of Kippenhahn's conjecture.

\section{Local Spectral Analysis}\label{local anal}

This section contains the background material that we will use in the next two sections while dealing with Kippenhahn's conjecture.

 Given a single Hermitian $k\times k$ matrix $A$, we denote by $\{\lambda_1,...,\lambda_r\}$ the set of its distinct eigenvalues having multiplicities $l_1,...,l_r$. respectively. Define
\begin{equation}\label{projections}
\mathcal{P}_j= \frac{1}{\displaystyle \prod_{r\neq j}(\lambda_j-\lambda_r)}\prod_{r\neq j}(A-\lambda_r I_k), \ j=1,...,r.	
\end{equation} Then $\mathcal{P}_j\in \mathfrak{A}(A)$ for each $j$. The following proposition is a simple consequence of functional calculus.

\begin{proposition}[\cite{SY}]\label{spectral projections}
$\mathcal{P}_j$	is the orthogonal projection of rank $l_j$ on the $\lambda_j$-eigenspace of $A$, \ $j=1,...,r$, so that
\begin{equation}\label{integral projection} 
{\mathcal P}_j=\frac{1}{2\pi i}\int_{\gamma_j} (w-A)^{-1}dw=\frac{1}{2\pi i} \int_{\gamma_j^\prime}\left(w-\frac{1}{\lambda_j}A_1\right)^{-1}dw,
\end{equation}
where $\gamma_j$ is a contour that separates $\lambda_j$ from all other eigenvalues, and $\gamma_j^\prime=\frac{1}{\lambda_j} \gamma_j$. 
\end{proposition}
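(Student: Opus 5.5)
The plan is to reduce everything to the spectral theorem for the Hermitian matrix $A$. I write $A=\sum_{m=1}^r \lambda_m E_m$, where $E_m$ is the orthogonal projection onto the $\lambda_m$-eigenspace. The $E_m$ are mutually orthogonal, $E_mE_{m'}=\delta_{mm'}E_m$, they satisfy $\sum_m E_m=I_k$, and $\operatorname{rank}E_m=l_m$. All three claimed expressions will then be identified with $E_j$. This identification gives the statement: $\mathcal P_j$ is an orthogonal projection of rank $l_j$ onto the $\lambda_j$-eigenspace. Since the indices $r$ in the products of \eqref{projections} run over $\{1,\dots,r\}\setminus\{j\}$, I will write the Lagrange polynomial as
\[
p_j(x)=\prod_{m\neq j}\frac{x-\lambda_m}{\lambda_j-\lambda_m},\qquad \mathcal P_j=p_j(A).
\]

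\textbf{Step 1 (the polynomial formula).} From the orthogonality relations, $A^N=\sum_m\lambda_m^N E_m$ for every $N\geq 0$. Hence $p(A)=\sum_m p(\lambda_m)E_m$ for every polynomial $p$. Since the $\lambda_m$ are distinct, $p_j(\lambda_m)=\delta_{jm}$, so $\mathcal P_j=E_j$. This proves the first claim, and it also exhibits $\mathcal P_j\in\mathfrak A(A)$.

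\textbf{Step 2 (the first contour integral).} For $w$ not an eigenvalue, the same calculus gives the resolvent
\[
(w-A)^{-1}=\sum_m (w-\lambda_m)^{-1}E_m .
\]
Let $\gamma_j$ be a positively oriented contour that encloses $\lambda_j$ and no other eigenvalue. By Cauchy's formula,
\[
\frac{1}{2\pi i}\int_{\gamma_j}\frac{dw}{w-\lambda_m}=\delta_{jm},
\]
so the integral equals $E_j=\mathcal P_j$.

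\textbf{Step 3 (the rescaled integral).} The second expression is meant for $\lambda_j\neq 0$. I read the $A_1$ appearing there as $A$; this reading is consistent with the later use, where $A$ is a pencil value. I substitute $w=u/\lambda_j$ with $u\in\gamma_j$, which carries $\gamma_j$ onto $\gamma_j'=\frac1{\lambda_j}\gamma_j$. Then
\[
\Bigl(w-\tfrac{1}{\lambda_j}A\Bigr)^{-1}=\lambda_j(u-A)^{-1}\quad\text{and}\quad dw=\tfrac{du}{\lambda_j},
\]
so the integrand transforms exactly into that of Step 2.

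The only point needing care, and I expect it to be the main (mild) obstacle, is that $\gamma_j'$ still winds once positively around the pole and isolates it. The map $u\mapsto u/\lambda_j$ is multiplication by a nonzero complex number, i.e.\ a rotation composed with a dilation. It is therefore orientation preserving, even when $\lambda_j<0$. It sends $\lambda_j$ to $1$ and the other eigenvalues $\lambda_m$ to $\lambda_m/\lambda_j\neq 1$, so winding numbers are preserved. Hence the second integral also equals $E_j$, which completes the proof.
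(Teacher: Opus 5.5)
Your proof is correct and follows the paper's route: the paper gives no argument beyond calling the statement a simple consequence of the functional calculus, and your argument (spectral decomposition with Lagrange interpolation giving $\mathcal{P}_j=E_j$, then Cauchy's formula for the resolvent integral, then the substitution $w=u/\lambda_j$) is that consequence written out. Reading $A_1$ as $A$ in the second integral is the intended interpretation, since it matches the later use with $A=A_1$; your winding-number remark is correct but not needed, because the change of variables along the image curve $\gamma_j'$ already gives the identity.
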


\subsection{Spectral Analysis for Matrix Tuples}
Now we consider a tuple $A=(A_1,...,A_n)$ of $k\times  k$ matrices and define the \textit{proper projective joint spectrum} as in \cite{ST}:
\begin{eqnarray}
\sigma_p(A_1,...,A_n)=\{z\in \C^n: \ \det(I-A_*(z))=0\}. \label{proper spectrum}
\end{eqnarray}
For convenience, we often write $\sigma_p(A_1,...,A_n)$ as $\sigma_p(A)$. If the characteristic polynomial $Q_A(z_0,z)$ is given by \eqref{char}, then 
\begin{equation}\label{charac} 
\sigma_p(A)=\{Q_A(-1,z)=0\}=\bigg\{\prod_{j=1}^s R_j(-1,z)=0\bigg\}.
\end{equation} 
For convenience, we shall often write $R_j(-1,z)$ as $R_j(z)$. Denoting the irreducible components of $\sigma_p(A)$ by 
\begin{equation}\label{component}
\Gamma_j=\{ R_j=0\},
\end{equation}
we have $\sigma_p(A)=\cup_{j=1}^s \Gamma_j$. This representation does not explicitly show the multiplicities of the spectral components, and these multiplicities play an important role in  our considerations. Hence, instead of $\sigma_p(A)$, we will often use what we call the \textit{proper projective joint spectrum in the divisor form}, which is just the zero-divisor of $Q_A(-1,z)$. We denote it by $\sigma_p^d(A)$. Then (\ref{char}) gives
\begin{equation}\label{divisor spectrum}
\sigma_p^d(A)=\displaystyle \sum_{j=1}^s t_j\Gamma_j,
\end{equation}
and we refer to it as the proper projective spectrum when there is no ambiguity.

In the subsequent discussions of this subsection, we  assume matrix $A_1$ is invertible. The orthogonal projections ${\mathcal P}_j, \ j=1,..., r$, are given by \eqref{projections} (with $A$ replaced by $A_1$), and the spectral resolution of $A_1$ is thus
\begin{equation}\label{spectral resolution A 1}
	A_1=\displaystyle \sum_{j=1}^r \lambda_j{\mathcal P}_j. 
\end{equation}

Following \cite{ST}, we introduce the operators
\begin{equation}\label{T}
{\mathcal T}_j=	\displaystyle \sum_{i\neq j}\frac{\lambda_j}{\lambda_i-\lambda_j}{\mathcal P}_i, \ \ \ 1\leq j\leq r.
\end{equation}

It is not hard to see that, for each $j$, we have $\tau_j:=(1/\lambda_j,0,\dots ,0)\in \sigma_p(A)$. Suppose that each $\tau_j$ belongs to a single component $\Gamma_i$, and it is regular with $\frac{\partial R_i(z)}{\partial z_1}\mid_{z=\tau_j} \neq 0$. This implies that in a small neighborhood ${\mathcal O}_\epsilon (\tau_j)$ we have $\sigma_p(A)\cap {\mathcal O}_\epsilon(\tau_j)=\Gamma_i\cap {\mathcal O}_\epsilon(\tau_j)$.
By the implicit function theorem $\Gamma_i$ is represented (perhaps in a smaller neighborhood of $\tau_j$) as: 
\begin{equation}\label{implicit}
z_1=z_{1,j}(z_2,...,z_{n}), \ z_{1,j}(0)=1/\lambda_j. 
\end{equation}
If $\hat{z}:=(z_2,...,z_n)$ is close to 0, then 
$$(z_{1,j}(\hat{z}),z_2,...,z_n)\in \sigma_p(A),$$ 
so that 1 is an eigenvalue of the pencil $A(\hat{z}):=z_{1,j}(\hat{z})A_1+z_2A_2+\cdots+z_nA_n$. Similar to \eqref{integral projection}, the integral
\begin{equation}\label{projection for pencil}
{\mathcal P}_j(\hat{z})=\frac{1}{2\pi i} \int_{\gamma^\prime}\big(w-A(\hat{z})\big)^{-1}dw	
\end{equation}
is the projection (not necessarily orthogonal) onto the span of all the subspaces corresponding to the Jordan cells of $A(\hat{z})$ associated with the eigenvalue 1. 
If $\hat{z}$ is close to 0, the rank of this projection is equal to $t_j$. 

Set
\begin{align}
\alpha_i^j&=A_i-\frac{\partial x_{1j}}{\partial x_l}(0)A_1,   \label{alpha}\\
\beta_{s_2,...,s_n}^j&= \frac{\partial ^{|s|} z_{1,j}}{\partial z_2^{s_2}\cdots z_n^{s_n} }(0)A_1, \ |s|=s_2+\cdots +s_n\geq 2, \label{beta}
\end{align}
and for integers $u\geq 1$ consider all matrices of the type
\begin{equation}\label{word}
W= Q_1C_1Q_2C_2\cdots Q_uC_uQ_{u+1},
\end{equation}
where 
\begin{align*}
Q_p&= \left\{ \begin{array}{cc} {\mathcal P}_j &  \\ & \\ {\mathcal T}_j^v & \mbox{ for some $v\geq  1$}   \
 \end{array}\right.,  \ p=1,...,u+1,  
\\
 C_p&=\left\{ \begin{array}{cc} \alpha_i^j & \\ & \\ \beta_{s_2,...,s_n}^j & \mbox{for some $(s_2,...,s_n)$}\end{array} \right ., \ p=1,...,u.
 	\end{align*}

We say that the power of $C_p$ is equal to 1 if $C_p=\alpha_i^j$. Otherwise, the power of $C_p$ is equal to $|s|$. For each such matrix $W$ and $i=2,..., n$, we define
\begin{align*}	
l(W)&=\text{ (the number of ${\mathcal P}_j$ among $Q_1,...,Q_{u+1}$)} - 1\\
m(W)&=\text{the sum of $(v-1)$ for all ${\mathcal T}_j$ terms among $Q_1,...,Q_u$ }\\
k_i^j(W)&=\text{the sum of all $\alpha_i^j$ plus the sum of all $q_i$ in $\beta$ terms among $C_p$}.
\end{align*}
We call the vector $(k_2^j(W),...,k_n^j(W))$ \textit{the signature of $W$} and denote it by $sign(W)$. Obviously, $k_2^j(W)+\cdots +k_n^j(W)$ is equal to the sum of powers of $C_p$.

Given a signature $\hat{a}:=(a_2,...,a_n)$, let us denote by $\Omega^j(\hat{a})$ the following collection of matrices: 
\begin{align}
\Omega^j(\hat{a})&=\big\{ W : \ sign(W)=\hat{a}, \ l(W)-m(W)=1\big\} \label{Omega self}	\\
\Omega_{{\mathcal P},{\mathcal P}}^j(\hat{a})&= \big \{ W\in \Omega^j(\hat{a}): \ Q_1=Q_{t+1}={\mathcal	 P}_j \big\}, \label{PP}\\
\Omega_{{\mathcal P},{\mathcal T}}^j(\hat{a})&= \big \{ W\in \Omega^j(\hat{a}): \ Q_1= \ {\mathcal	 P}_j, Q_{t+1}={\mathcal T}_j^u \big\} \label{PT},\\
\Omega_{{\mathcal T},{\mathcal P}}^j(\hat{a})&= \big \{ W\in \Omega^j(\hat{a}): \ Q_1= \ {\mathcal	 T}_j^u, Q_{t+1}={\mathcal P}_j \big\}, \label{TP}\\
\Omega_{{\mathcal T},{\mathcal T}}^j(\hat{a})&= \big \{ W\in \Omega^j(\hat{a}): \  Q_{1}={\mathcal T}_j^{u_1}, \ Q_{t+1}=\mathcal{T}_j^{u_{t+1}}\big\}.\label{TT}
\end{align}
We also write
\begin{equation}\label{Omega not self}
\widehat{\Omega}^j(\hat{a})=\big\{ W : \ sign(W)=\hat{a}, \ l(W)-m(W)=l_j\big\},	
\end{equation}
and define $\widehat{\Omega}_{{\mathcal P},{\mathcal P}}^j(\hat{a}),  \ \widehat{\Omega}_{{\mathcal P},{\mathcal T}}^j(\hat{a}),  \ \widehat{\Omega}_{{\mathcal T},{\mathcal P}}^j(\hat{a}), \ 
\widehat{\Omega}_{{\mathcal T},{\mathcal T}}^j(\hat{a})$
in the same way as that in  \eqref{PP} - \eqref{TT} with the only difference that   $W\in \widehat{\Omega}^j(\hat{a})$. The following theorem was proved in \cite{S2}:

\begin{theorem}[\cite{S2}]\label{local spectral}
Let $A$ be a tuple such that $A_1$ is self-adjoint and invertible. Suppose that the reciprocal of each eigenvalue of $A_1$  belongs to a single component $\Gamma_i$ and is a regular point of this component (not counting multiplicity), with  $\frac{\partial R_i}{\partial x_1}\Big|_{\tau_j} \neq 0$.
Then
 \begin{itemize} 
\item[(a)] If $A_2,...,A_n$ are Hermitian, we have
\begin{eqnarray}
\displaystyle \sum_{\Omega_{{\mathcal P},{\mathcal P}}^j(\hat{a})} W =\sum_{\Omega_{{\mathcal P},{\mathcal T}}^j(\hat{a})} W =\displaystyle \sum_{\Omega_{{\mathcal T},{\mathcal P}}^j(\hat{a})} W=\sum_{\Omega_{{\mathcal T},{\mathcal T}}^j(\hat{a})} W =0.	\label{local self}
\end{eqnarray}
\item[(b)] If at least one of $A_2,...,A_m$ is not Hermitian, then
\begin{eqnarray}
\displaystyle \sum_{\widehat{\Omega}_{{\mathcal P},{\mathcal P}}^j(\hat{a})} W =\sum_{\widehat{\Omega}_{{\mathcal P},{\mathcal T}}^j(\hat{a})} W =\displaystyle \sum_{\widehat{\Omega}_{{\mathcal T},{\mathcal P}}^j(\hat{a})} W=\sum_{\widehat{\Omega}_{{\mathcal T},{\mathcal T}}^j(\hat{a})} W =0.	\label{local not self}
\end{eqnarray}
\end{itemize}	
	
\end{theorem}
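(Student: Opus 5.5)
This is a perturbation-theoretic statement, and I would prove it by expanding the spectral projection ${\mathcal P}_j(\hat z)$ of \eqref{projection for pencil} in a power series in $\hat z$ and exploiting the fact that its rank is constant.

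First I write the pencil along the component $\Gamma_i$ as
\[
A(\hat z)=z_{1,j}(\hat z)A_1+\sum_{i\ge 2}z_iA_i=\tfrac{1}{\lambda_j}A_1+\Delta(\hat z).
\]
Taylor expanding $z_{1,j}$ from \eqref{implicit} gives
\[
\Delta(\hat z)=\sum_{i\ge2} z_i\,\alpha_i^j+\sum_{|s|\ge2}\frac{\hat z^s}{s!}\,\beta^j_{s},
\]
with $\alpha,\beta$ as in \eqref{alpha}, \eqref{beta}. Next I expand the resolvent $(w-A(\hat z))^{-1}$ in a Neumann series around $(w-\frac1{\lambda_j}A_1)^{-1}$. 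Near $w=1$, the spectral resolution \eqref{spectral resolution A 1} gives
\[
\Bigl(w-\tfrac1{\lambda_j}A_1\Bigr)^{-1}=\frac{{\mathcal P}_j}{w-1}+\sum_{i\ne j}\frac{{\mathcal P}_i}{w-\lambda_i/\lambda_j}.
\]
Expanding each $(w-\lambda_i/\lambda_j)^{-1}$ in powers of $(w-1)$ produces powers of ${\mathcal T}_j$ (up to sign and normalization) as the regular part. Substituting this into the Neumann series and taking residues at $w=1$ yields the classical Kato expansion: ${\mathcal P}_j(\hat z)$ is a sum of words \eqref{word}. In each word, the $C_p$ are the coefficient matrices of $\Delta$, and the $Q_p$ are ${\mathcal P}_j$ or ${\mathcal T}_j^v$. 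The residue condition is exactly a counting relation between the number of ${\mathcal P}_j$ factors and the total ${\mathcal T}$-excess, and this is what $l(W)-m(W)$ records. Collecting terms by signature, the coefficient of $\hat z^{\hat a}$ is a sum of the $W$ with $sign(W)=\hat a$.

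The key input is the constraint on the eigenvalue $1$ of $A(\hat z)$. Under the regularity and simple-crossing hypothesis, $\det(I-A(\hat z))$ vanishes to order exactly $t_i$ (the multiplicity of $R_i$) along $\Gamma_i$.

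In case (a), $A(\hat z)$ is Hermitian for real $\hat z$, so it is diagonalizable. Hence $A(\hat z){\mathcal P}_j(\hat z)={\mathcal P}_j(\hat z)$ identically near $0$, i.e.\ the eigenvalue $1$ stays semisimple with constant multiplicity $l_j$. Expanding the identity
\[
(A(\hat z)-I){\mathcal P}_j(\hat z)=0
\]
in powers of $\hat z$, and analytically continuing from real to complex $\hat z$, kills each homogeneous coefficient. Inserting ${\mathcal P}_j$ or ${\mathcal T}_j$ on the left and right of the resulting identity separates the four boundary types ${\mathcal P}{\mathcal P}$, ${\mathcal P}{\mathcal T}$, ${\mathcal T}{\mathcal P}$, ${\mathcal T}{\mathcal T}$. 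This uses ${\mathcal P}_j{\mathcal T}_j={\mathcal T}_j{\mathcal P}_j=0$, and the fact that ${\mathcal T}_j$ is invertible on $\operatorname{ran}(I-{\mathcal P}_j)$. The outcome should be exactly \eqref{local self}, with the constraint $l-m=1$ arising from the first-order nilpotent part $(A(\hat z)-I){\mathcal P}_j(\hat z)$.

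In case (b), $A(\hat z)$ need not be diagonalizable, so only nilpotency is available. The operator $N(\hat z)=(A(\hat z)-I){\mathcal P}_j(\hat z)$ acts on a space of dimension $l_j$, so $N(\hat z)^{l_j}=0$. Expanding this identity in the same way gives the words with $l-m=l_j$, hence \eqref{local not self}.

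The main obstacle is the combinatorial bookkeeping. I must check that the coefficients produced by the residue calculus and by the products of expansions really organize into the sums over $\Omega^j$ with unit weights (or uniform weights that can be divided out). I also need that no cross-cancellation between different boundary types occurs. I would first verify this carefully in the $n=2$, low-order case to fix normalizations, and then do the general induction on $|\hat a|$.
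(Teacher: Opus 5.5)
The paper gives no proof of this result (it is quoted from \cite{S2}), so I judge your proposal on its own terms. Your mechanism is right. $\mathcal{T}_j$ is exactly the reduced resolvent of $\lambda_j^{-1}A_1$ at $1$, and on a small circle $\gamma'$ about $1$
\[
\Bigl(w-\tfrac{1}{\lambda_j}A_1\Bigr)^{-1}=\frac{\mathcal{P}_j}{w-1}-\sum_{v\ge1}(w-1)^{v-1}\mathcal{T}_j^{\,v}.
\]
So in the Neumann series for $(w-A(\hat z))^{-1}$, a word $W$ of the form \eqref{word} carries the $w$-dependent factor $(-1)^{\#\mathcal{T}(W)}(w-1)^{m(W)-l(W)-1}$, where $\#\mathcal{T}(W)$ is the number of $\mathcal{T}$-blocks. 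The efficient way to organize both cases is
\[
(A(\hat z)-I)^p\,\mathcal{P}_j(\hat z)=\frac{1}{2\pi i}\oint_{\gamma'}(w-1)^p\bigl(w-A(\hat z)\bigr)^{-1}dw,
\]
which picks out exactly the words with $l-m=p$. Take $p=1$ in (a) and $p=l_j$ in (b), rather than multiplying out $(A-I)\mathcal{P}_j(\hat z)$ or $N(\hat z)^{l_j}$. This count also shows that $m(W)$ must include the last factor $Q_{u+1}$. The four boundary types then separate by multiplying on the left and right by $\mathcal{P}_j$ or $I-\mathcal{P}_j$; invertibility of $\mathcal{T}_j$ is not needed.

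The step you deferred does not come out as you hoped: the weights are not uniform. With $\Delta$ normalized as you wrote it, the coefficient of $\hat z^{\hat a}$ is $\sum_W(-1)^{\#\mathcal{T}(W)}\bigl(\prod_{\beta_s\in W}\tfrac{1}{s!}\bigr)W$, and the plain sums in \eqref{local self} are in general nonzero. For example, take $A_1=\diag(1,2)$, $A_2=\begin{pmatrix}0&b\\ \bar b&0\end{pmatrix}$ with $b\neq0$, and $j=1$; the hypotheses hold and $Q_A$ is irreducible. Then $z_{1,1}(z_2)=1+|b|^2z_2^2+O(z_2^4)$, $\alpha^1_2=A_2$, $\beta^1_2=2|b|^2A_1$, $\mathcal{P}_1=E_{11}$ and $\mathcal{T}_1=E_{22}$. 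Here $\Omega^1_{\mathcal{P},\mathcal{P}}((2))=\{\mathcal{P}_1\beta^1_2\mathcal{P}_1,\ \mathcal{P}_1\alpha^1_2\mathcal{T}_1\alpha^1_2\mathcal{P}_1\}$, with values $2|b|^2E_{11}$ and $|b|^2E_{11}$. The plain sum is $3|b|^2E_{11}$, while the weighted one is $\tfrac12\cdot 2|b|^2-|b|^2=0$. The signs can be absorbed by writing the words in the letters $-\mathcal{P}_j$ and $-\mathcal{T}_j$ (as in Kato's series), and the factorials by putting $1/s!$ into $\beta^j_s$, but not with the normalizations printed here. So your argument proves the weighted identity, which is the true statement; state it that way instead of looking for unit weights.

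Two analytic inputs also need to be stated correctly.

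First, what you need is not that $\det(I-A(\hat z))$ ``vanishes to order $t_i$ along $\Gamma_i$'' (it vanishes identically there). You need that $1$ is the \emph{only} eigenvalue of $A(\hat z)$ inside $\gamma'$. Otherwise $\mathcal{P}_j(\hat z)$ projects onto a cluster and $(A(\hat z)-I)\mathcal{P}_j(\hat z)\neq0$ even for Hermitian tuples. Put $\zeta=(z_{1,j}(\hat z),\hat z)$; eigenvalues $\mu$ near $1$ correspond to $\zeta/\mu\in\Gamma_i$. Since $\frac{d}{d\mu}R_i(\zeta/\mu)\big|_{\mu=1,\,\hat z=0}=-\lambda_j^{-1}\frac{\partial R_i}{\partial z_1}(\tau_j)\neq0$, $\mu=1$ is the unique such root and has algebraic multiplicity $t_i$. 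Moreover $t_i=l_j$, because $R_i(z_1,0,\dots,0)$ has a simple zero at $1/\lambda_j$ and no other component passes through $\tau_j$. This is what yields both $(A(\hat z)-I)\mathcal{P}_j(\hat z)=0$ in (a) and $N(\hat z)^{l_j}=0$ in (b).

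Second, in (a) you need $z_{1,j}(\hat z)\in\R$ for real $\hat z$ before you can say $A(\hat z)$ is Hermitian. For small real $\hat z$ the matrix $I-\sum_{i\ge2}z_iA_i$ is positive definite, so every root $z_1$ of $\det\bigl(I-z_1A_1-\sum_{i\ge2}z_iA_i\bigr)$ is real.

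With these two points and the corrected weights, your outline becomes a complete proof of the correctly normalized statement.
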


\subsection{Admissible Transformations}

The notion of admissible transformations and tuples was introduced in \cite{S1, SY}. Consider a tuple $A$ whose characteristic polynomial $Q_A(z_0,z)$ has an irreducible decomposition \eqref{char}. For any fixed invertible matrix $C=[c_{ij}]_{i,j=1}^n$, i.e., $C$ in the general linear group $GL(n,\C)$, consider the following tuple transformation:
\begin{equation}\label{transform}
\widehat{A}_j=\displaystyle \sum_{s=1}^n c_{js}A_s, \ j=1,...,n. 
\end{equation} 
\noindent It is easy to see that the characteristic polynomial of the tuple $\widehat{A}=(\widehat{A}_1,...,\widehat{A}_n)$ is given by
\begin{equation}\label{irrfactor}
Q_{\widehat{A}}(z_0,z)= \displaystyle \prod_{j=1}^s \widehat{R}_j(z_0,z)^{t_j}=\displaystyle \prod_{j=1}^s R_j(z_0,zC)^{t_j}=Q_A(z_0, zC). 
\end{equation}
Hence, the tuples $A$ and $\widehat{A}$ are spectrally equivalent. Evidently, the factors $\widehat{R}_j$ are irreducible if and only if $R_j$ are. Observe that if we shift $A_j$ to $\widetilde{A}_j=A_j+a_jI_k$, where $a_j\in \C$, then 
\begin{equation}\label{char1}
Q_{\widetilde{A}}(z_0,z)=Q_A(z_0+\langle a,z\rangle, z)=\displaystyle \prod_{j=1}^s \widetilde{R}_j(z_0,z)^{t_j}= \prod_{j=1}^s R_j(z_0+\langle a,z\rangle,z)^{t_j},	
\end{equation}
where $\langle z,a\rangle=a_1z_1+\cdots +a_nz_n$. Similarly, the factor $\widetilde{R}_j$ is irreducible if and only if $R_j$ is. Thus, $Q_{\widetilde{A}}$ is factored with the same multiplicity of factors as $Q_{A}$ does. Since the lattice of common invariant subspaces of $A_j$ does not change after adding a scalar multiple of the identity, without loss of generality we may assume that all  matrices in the tuple $A$ are invertible, and that is what we do in the sequel.

First, assume $Q_A(z_0,z)$ has an irreducible decomposition \eqref{char} and set
\[p_A(z)=\prod_{j=1}^s R_j(z).\] We let 
\begin{equation}\label{L}
L_j:=\big\{ (0,\cdots,z_j, 0,\cdots,0): \ z_j\in \C \big\}\subset \C^{n}
\end{equation} be the $j$th complex coordinate line of $\C^n$.

\begin{definition}\label{admissible}
Given a matrix $C\in GL(n,\C)$, we say that the transformation \eqref{transform} {\em admissible} if
\begin{itemize}
\item[a)] $C$ is real-valued in the case $A$ consists of Hermitian matrices;
\item[b)] for each $1\leq i\leq n$, at every point $\lambda\in \sigma_p(\widehat{A})\cap L_i$, the partial derivative $\frac{\partial {p}_{\widehat{A}}(z)}{\partial z_i}|_{z=\lambda}\neq 0$.
\end{itemize}	
\end{definition}

\noindent It is worth noting that part (2) above implies that $\lambda$ is a regular point of the algebraic variety \[\sigma_p(\widehat{A})=\{z\in \C^n: p_{\widehat{A}}(z)=0\}.\]
The matrix $C$ above will be called an admissible matrix for the tuple $A$. In particular, if the identity matrix $I_n$ is admissible for $A$, then we say that tuple $A$ is admissible. In this case, since every regular point of the variety $\sigma_p(A)$ has multiplicity 1, we see that the intersection of $\Gamma_p=\big\{z\in \C^n: R_p(z)=0\}$ with $L_j$ consists of $l_p$ distinct points, and these sets of points are different for different $p$. The following theorem was proved for Hermitian tuples in \cite{S1} and for non-Hermitian tuples in \cite{SY}.

\begin{theorem}[\cite{S1, SY}]\label{adm}
For every tuple of invertible matrices $(A_1,...,A_n)$, the set of admissible transformations $C$ is open and dense in every neighborhood of the identity matrix $I_n$.	
\end{theorem}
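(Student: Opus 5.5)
The plan is to reduce condition b) of Definition \ref{admissible} to the nonvanishing of a single polynomial in the entries of each row of $C$. Openness and density then both follow from the fact that the complement of a proper algebraic subset is open and dense. Write $c_i=(c_{i1},\dots,c_{in})$ for the $i$th row of $C$. By \eqref{irrfactor}, $p_{\widehat{A}}(z)=p_A(zC)$, and for $z=te_i\in L_i$ we have $zC=tc_i$. So the points of $\sigma_p(\widehat{A})\cap L_i$ are the points $te_i$ with $f_{c_i}(t):=p_A(tc_i)=0$. By the chain rule,
\[
\frac{\partial p_{\widehat{A}}}{\partial z_i}(te_i)=\nabla p_A(tc_i)\cdot c_i=\frac{d}{dt}f_{c_i}(t).
\]
Hence $C$ satisfies b) exactly when, for every $i$, the one-variable polynomial $f_{c_i}$ has no multiple root. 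Note that $f_c(0)=p_A(0)=\prod_j R_j(-1,0)\neq 0$, because each $R_j$ is monic in $z_0$ up to a constant. The roots therefore stay away from $0$.

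Next I would encode ``$f_c$ has no multiple root'' algebraically. Expand $f_c(t)=\sum_{m=0}^{r}h_m(c)t^m$, where $r=\deg p_A$ and $h_m$ is the degree-$m$ homogeneous part of $p_A$ evaluated at $c$. Let $D(c)$ be the discriminant of $f_c$ taken as a polynomial of formal degree $r$; it is a polynomial in $c$. On the open set $\{h_r(c)\neq 0\}$, the condition $D(c)\neq 0$ is exactly that all roots of $f_c$ are simple. So I will use the set $U=\{c: h_r(c)D(c)\neq 0\}$, which lies inside the set of good rows (this is harmless, since we only need an open dense set of admissible $C$). Then $\{C\in GL(n,\C): c_i\in U \text{ for all } i\}$ is open, being defined by finitely many polynomial non-vanishings. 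It is dense provided $h_rD\not\equiv 0$, because the complement of a nonzero polynomial's zero set is dense in $\C^n$. In the Hermitian case (condition a)), a nonzero complex polynomial cannot vanish on a nonempty open subset of $\R^n$, so $U\cap\R^n$ is open and dense in $\R^n$. In either case we get openness and density in every neighborhood of $I_n$.

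The main obstacle is showing $D\not\equiv 0$, i.e.\ that a generic line through the origin meets the hypersurface $\{p_A=0\}$ only transversally. Here I would use that $p_A=\prod_j R_j$ is squarefree, because the $R_j$ are distinct irreducibles. Its singular set $S$, which includes all pairwise intersections $\Gamma_i\cap\Gamma_j$, therefore has dimension at most $n-2$. The cone $\{tz: z\in S, t\in\C\}$ has dimension at most $n-1$, so a generic direction $c$ gives a line $\{tc\}$ avoiding $S$. On the regular part, consider the holomorphic map $z\mapsto[z]\in\Pp^{n-1}$; the origin is not on the hypersurface. A line $\{tc\}$ is tangent to the hypersurface at a regular point $z=tc$ precisely when $[c]$ is a critical value of this map. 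By Sard's theorem the set of critical values has measure zero, so a generic $[c]$ is not one. The same genericity also gives $h_r(c)\neq 0$. For such $c$ every root of $f_c$ is simple, so $D(c)\neq 0$, which finishes the argument. The point needing care is that Sard's theorem applies on the noncompact regular locus; measure zero is still all we need to find one good $c$.
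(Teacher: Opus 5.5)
The paper does not prove this statement; it quotes it from \cite{S1,SY}, so there is no in-paper argument to compare routes with. Your main line is correct and self-contained. The identity $p_{\widehat A}(z)=p_A(zC)$ and the chain rule correctly reduce condition b) to simplicity of the roots of $f_{c_i}(t)=p_A(tc_i)$. The formal discriminant $D$ is a polynomial in $c$. Your proof that $h_rD\not\equiv 0$ is sound: $p_A$ is squarefree, its singular set has dimension at most $n-2$, the cone over that set has dimension at most $n-1$, and Sard applies to $z\mapsto[z]$ on the regular locus. The passage to real $C$ in the Hermitian case is also fine.

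What is missing is openness of the set of admissible $C$ itself. You show only that admissible matrices contain the open dense set $\{C: h_r(c_i)D(c_i)\neq 0\ \forall i\}$, and you call the difference harmless. But the statement asserts openness, and you never use the hypothesis that the $A_i$ are invertible, which is exactly what closes this gap. For an invertible tuple the top-degree part of $R_j(-1,z)$ is $R_j(0,z)$, so $h_r(c)=\prod_j R_j(0,c)$. Also $\det A_*(c)=Q_A(0,c)=\prod_j R_j(0,c)^{t_j}$. Hence $h_r(c_i)\neq 0$ if and only if $\widehat A_i=A_*(c_i)$ is invertible, and in particular $h_r(e_i)\neq 0$. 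So on some neighborhood $N$ of $I_n$ every $h_r(c_i)\neq 0$. On $N$, the admissible set is exactly $\{C\in N:\prod_i D(c_i)\neq 0\}$ (intersected with real matrices in the Hermitian case), which is relatively open. Together with your density argument, this proves the theorem for all sufficiently small neighborhoods of $I_n$.

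This restriction is not cosmetic. When $h_r(c_0)=h_{r-1}(c_0)=0$, two roots of $f_c$ escape to infinity as $c\to c_0$, and they can coincide for nearby $c$. Take the invertible Hermitian tuple $(2I+\sigma_1,\,3I+\sigma_2,\,4I+\sigma_3,\,5I+\sigma_1,\,6I+\sigma_2)$. Let $C_0$ be the real matrix with first row $c_0=(1,-1,0,-1,1)$ and remaining rows $e_2,\dots,e_5$. Then $C_0$ is admissible: $\widehat A_1=A_*(c_0)=0$, so $\sigma_p(\widehat A)\cap L_1=\emptyset$, and the other rows give simple roots. Now replace $c_0$ by $c_0+\epsilon(1,0,0,-1,0)$. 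This gives $\widehat A_1=-3\epsilon I$ and $p_A(tc)=(1+3\epsilon t)^2$, which has a double root, so the perturbed matrix is not admissible. Thus the admissible set is not open at $C_0$, and ``every neighborhood'' in the statement must be read as ``every sufficiently small neighborhood''.
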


It is clear from (\ref{irrfactor}) that admissible transformations preserve the spectral index $\kappa(A)$ as well as tuple $A$'s lattice of common invariant subspaces. The next fact is a consequence of \eqref{char1}, the remarks following Definition \ref{admissible}, and Theorem \ref{adm}.
\begin{corollary}\label{admi}
Given any matrix $n$-tuple $A$ with $Q_A$ being factored as in \eqref{char}, there exists an open set $\mathcal{O}\subset \C^n$ such that for every $1\leq j\leq s$ and $w\in \mathcal{O}$, the one variable polynomial $R_j(z_0,w)$ has only simple zeros.
\end{corollary}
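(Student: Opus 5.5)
Since the $R_j$ are irreducible and pairwise distinct, each one, viewed as a polynomial in $z_0$ over the field $\C(z)$ of rational functions in $z=(z_1,\dots,z_n)$, is separable (characteristic zero) and coprime to the others. The plan is to use discriminants. First reduce to the case where each $R_j$ is monic in $z_0$ of degree $l_j\ge 1$. This holds because $Q_A(z_0,z)=z_0^k+\cdots$ is monic in $z_0$ by \eqref{expandQ}, so every irreducible factor can be normalized to be monic in $z_0$; in particular no $R_j$ is independent of $z_0$.

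Next, for each $j$ with $l_j\ge 2$, let $D_j(z)=\operatorname{disc}_{z_0}R_j(z_0,z)\in\C[z]$. The resultant of $R_j$ and $\partial R_j/\partial z_0$ vanishes identically only if they share a nonconstant common factor in $\C(z)[z_0]$. By Gauss' Lemma (as in Corollary \ref{q_A}) such a factor could be taken in $\C[z_0,z]$ with positive degree in $z_0$. Irreducibility of $R_j$ would then force $R_j$ to divide $\partial R_j/\partial z_0$, which is impossible by degree in $z_0$. Hence $D_j\not\equiv 0$. For $l_j=1$ the simple-zero condition is automatic, so set $D_j=1$.

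Then put $D=\prod_j D_j$, a nonzero polynomial on $\C^n$. Its complement $\mathcal{O}=\{w\in\C^n:D(w)\neq 0\}$ is open and dense. For each $w\in\mathcal{O}$, the monic one-variable polynomial $R_j(z_0,w)$ has degree $l_j$ and nonzero discriminant, so it has only simple zeros. If disjointness of zeros across different $j$ is also wanted (as in the remarks after Definition \ref{admissible}), multiply $D$ by the resultants $\operatorname{Res}_{z_0}(R_i,R_j)$ for $i\neq j$. These are nonzero by the same Gauss-lemma argument, since distinct irreducible monic factors are coprime.

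Alternatively, one could follow the route the paper suggests. Theorem \ref{adm} gives an admissible $C$ near $I_n$. The regularity in Definition \ref{admissible} b) along $L_1$ shows that $\widehat R_j(z_0,e_1)$ has simple zeros. Openness of the simple-root condition then yields a neighborhood of $e_1C$, using $\widehat R_j(z_0,z)=R_j(z_0,zC)$ from \eqref{irrfactor} and homogeneity to pass between $z_0=-1$ and the line $L_1$. The only delicate step is in this second route: checking that regularity of $p_{\widehat A}$ at points of $\sigma_p(\widehat A)\cap L_1$ translates into $\partial_{z_0}R_j\neq 0$ via Euler's identity for homogeneous polynomials. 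For that reason the discriminant argument above is the main proof, and this route would serve only as a cross-check.
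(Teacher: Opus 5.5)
Your proof is correct, but your main argument is not the paper's; the paper's route is essentially the one you offer as a cross-check. The paper derives the corollary in one line from the admissibility machinery. First, shifting by scalars via \eqref{char1} makes the matrices invertible; for fixed $w$ this only translates $z_0$, so simple zeros are preserved. Theorem~\ref{adm} then supplies an admissible $C$ near $I_n$. By the remarks after Definition~\ref{admissible}, each $\{\widehat R_p(-1,z)=0\}$ meets the coordinate line $L_1$ in $l_p$ distinct regular points. Hence every $R_j(z_0,e_1C)=\widehat R_j(z_0,e_1)$ has simple zeros, and continuity of roots spreads this to a neighborhood of $e_1C$.

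Your discriminant argument needs only two algebraic facts. First, $Q_A$ is monic in $z_0$, so each $R_j$ has a nonzero constant leading coefficient in $z_0$, and $\mathrm{disc}_{z_0}$ commutes with the specialization $z=w$. Second, an irreducible polynomial is separable in characteristic zero, and you correctly transfer this from $\C[z_0,z]$ to $\C(z)[z_0]$ by Gauss' Lemma, as in Corollary~\ref{q_A}.

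Your argument is self-contained, needs neither invertibility nor the imported Theorem~\ref{adm}, and proves more. Your $\mathcal{O}$ is the complement of a proper algebraic hypersurface, hence open and dense. Once the pairwise resultants are included, the zero sets for different $j$ are disjoint there as well. What the paper's route buys is integration with the admissible-tuple framework of Sections~\ref{Kippenhahn} and~\ref{characteristic graph}, where the good points lie on coordinate lines of a spectrally equivalent tuple. For the corollary as stated, your argument is the cleaner and more informative one.
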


\section{Kippenhahn's Conjecture}\label{Kippenhahn}

Kippenhahn's conjecture \cite{Ki} concerns the characteristic polynomials of Hermitian pairs $(A_1, A_2)$. 

\begin{conjecture}
Let $A_1$ and $A_2$ be Hermitian matrices of equal size. If $\det (z_0I+z_1A_1+z_2A_2)$ has a repeated factor, then $(A_1, A_2)$ is reducible.
\end{conjecture}
\noindent In light of Proposition \ref{chacteristic minimal}, this conjecture asserts that if the characteristic polynomial of a Hermitian pair  $(A_1,A_2)$ is not minimal, then the pair is reducible. In \cite{Ki}, Kippenhahn verified the conjecture whenever the degrees $l_j$ in \eqref{char} do not exceed 2.
In subsequent works \cite{Sh1,Sh2}, Shapiro obtained a number of results which supported the conjecture. In particular, she showed that it holds if the size of the matrices $k\leq 5$. In 1983, Laffey \cite{La} showed that, in general, Kippenhahn's conjecture is not true by constructing a counterexample for $k=8$. In the same year, the following counterexamples with $3r\times 3r$ matrices were discovered for all $r\geq 2$ in Waterhouse \cite{Wa}. 

\begin{example}\normalfont
Let $S$ and $T$ be $r\times r$ ($r\geq 2$) positive definite matrices that have no nontrivial common invariant subspaces and define
\[A_1=\begin{pmatrix} 0 & 0 & 0 \\ 0 & 0 & S \\ 0 & S & 0 \end{pmatrix},
\hspace{1cm} A_2=\begin{pmatrix} 0 & 0 & T \\ 0 & 0 & 0 \\ T & 0 & 0 \end{pmatrix}.\]
Then the characteristic polynomial for the pair $A=(A_1, A_2)$ is
\begin{align*}
    Q_A(z_0,z)&=\det \begin{pmatrix} z_0 & 0 & z_2T \\ 0 & z_0 & z_1S \\ z_2T & z_1S & z_0 \end{pmatrix}
    =\det \begin{pmatrix} z_0 & 0 & z_2T \\ 0 & z_0 & z_1S \\ 0 & z_1S & z_0-\frac{z_2^2}{z_0}T^2 \end{pmatrix}\\
    &=z_0^r\det \begin{pmatrix} z_0 & z_1S \\ z_1S & z_0-\frac{z_2^2}{z_0}T^2 \end{pmatrix}=z_0^r\det(z_0^2-z_1^2S^2-z_2^2T^2),
\end{align*}
which has a repeated factor $z_0$. However, the pair $A$ is irreducible. Furthermore, according to Theorem \ref{chacteristic minimal}, $Q_A(z_0,z)$ is not minimal and hence $A$ is not linearly cyclic by Theorem \ref{lcyclic}. Here, we verify this fact by direct computation.

Indeed, for any choice of $z\in \C^2$ and vectors $a, b, c\in \C^r$,
we can choose a nonzero vector $e\in \C^r$ that is orthogonal to $z_1Ta+z_2Sb$. Then, direct computation verifies that 
\[\begin{pmatrix}
\bar{z}_1T^{-1}e \\ -\bar{z}_2S^{-1}e \\ 0 \end{pmatrix} \ \text{is orthogonal to}\ A^m_*(z)
\begin{pmatrix} a \\ b \\ c \end{pmatrix}\] for all $m\geq 0$,
showing that $A_*(z)=z_1A_1+z_2A_2$ is not cyclic.

\end{example}

In 1998, more counterexamples with $6\times 6$ matrices were constructed in Li, Spitkovsky, and Shuka \cite{LSS}. Additional counterexamples can be found in \cite{Law}. In 2017, Klep and Vol\u{c}i\u{c} \cite{KVo1} proved the validity of a quantum version of Kippenhahn's conjecture. Recent paper \cite{S2} continued the investigation of the original Kippenhahn's conjecture for Hermitian matrix tuples of arbitrary length. 
Theorem \ref{kipp main}  below presents the result obtained there. 

In this section, we will show that there is a strong connection between the minimality of the characteristic polynomials and irreducibility of matrix tuples, though this connection is not as direct as it appears in Kippenhahn's conjecture. In  the next section  we will establish a necessary and sufficient condition for the validity of Kippenhahn's conjecture.

\subsection{Reducible Matrix Tuples}\label{6.6}

We first review two recent results that address the presence of repeated factors in characteristic polynomials \cite{S2}. 

Let $A$ be an admissible tuple of Hermitian invertible $k\times k$ matrices.
Assume the proper projective joint spectrum in divisor form $\sigma_p^d(A)$ is given by
\begin{equation}\label{spectrum1}
\sigma_p^d(A)=l\Big\{ R(z)=0\Big\}, 
\end{equation}
where $l>1$ and $R$ is a polynomial of degree $m$, so that $k=ml$. We assume that $R(z_1,0,...0)$ has simple roots.
Denote by $\lambda_1,...,\lambda_m$ the set of distinct eigenvalues of $A_1$. Since $A_1$ is invertible, $\lambda_j\neq 0$. Like in \eqref{projections} and \eqref{integral projection}  \  we denote by $\mathcal{P}_1,...,\mathcal{P}_m$ the  orthogonal projections on eigenspaces of $A_1$ corresponding to $\lambda_1,...,\lambda_m$. Of course, \eqref{spectrum1} implies that all $\lambda_j$ have multiplicity $l$.

We will be dealing with the following subset of the algebra generated by $A_1,...,A_n$. 
For $q\leq m$, \  $2\leq i_1,...,,i_q\leq  n$, and a set of pairwise distinct indices $j_1,...,j_{q-1}\in \{1,...,m\}$  write 
\begin{equation}\label{W IJ}
W_{IJ} = A_{i_1}\mathcal{P}_{j_1}A_{i_2}\mathcal{P}_{j_2}\cdots A_{i_q-1}\mathcal{P}_{j_q-1}A_{i_q}.
\end{equation}
Here $I=(i_1,...,i_q), \ J=(j_1,...j_{q-1})$, and,  if $q=1$, then $J=\emptyset$. We denote by $\mathscr{W}$ the collection of all such matrices. In particular, we have $A_2,...,A_n  \in \mathscr{W}.$ 

\begin{theorem}[\cite{S2}, Theorem 3.1]\label{kipp  main}
Let $A$ be an admissible $n$-tuple of $ml\times ml$ Hermitian invertible matrices.  
Suppose $\sigma_p^d(A)$ is given by \eqref{spectrum1}, 
where $R$ is a  polynomial of degree $m$ such that $R(z_1,0,...,0)$ has simple roots as a polynomial in $z_1$. Then $A$ is unitarily equivalent to a direct sum of $l$ identical $n$-tuples of $m\times m$ matrices if and only if for each $W_{IJ}\in \mathscr{W}$ we have
\begin{eqnarray}
&\sigma_p^d(A_1,W_{IJ}) 
=l\{R_{IJ}(x_1,x_2)=0\}, \label{big spectrum}
\end{eqnarray}	
where $R_{IJ}$ is a polynomial of degree $m$.
\end{theorem}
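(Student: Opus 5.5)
Necessity is the easy half. Suppose $A\cong \bigoplus_{i=1}^l B$ with $B=(B_1,\dots,B_n)$ a tuple of $m\times m$ matrices. Then $A_1\cong \oplus^l B_1$. The eigenvalues $\lambda_1,\dots,\lambda_m$ of $A_1$ each have multiplicity $l$, so $B_1$ has the $m$ simple eigenvalues $\lambda_j$. Each spectral projection $\mathcal P_j$ is the $l$-fold copy of the corresponding spectral projection of $B_1$; this follows from \eqref{projections}, since it is a polynomial in $A_1$. Hence every $W_{IJ}\in\mathscr W$ is $\oplus^l W_{IJ}(B)$. Consequently
\[
\det(I-x_1A_1-x_2W_{IJ})=\det(I-x_1B_1-x_2W_{IJ}(B))^l ,
\]
which gives \eqref{big spectrum} with $R_{IJ}(x_1,x_2)=\det(I-x_1B_1-x_2W_{IJ}(B))$. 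Its degree is $m$ because its restriction to $x_2=0$ is $\prod_j(1-x_1\lambda_j)$.

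For sufficiency I would work in the eigenbasis of $A_1$. Write $\C^{ml}=\bigoplus_{j=1}^m E_j$ with $E_j=\operatorname{ran}\mathcal P_j$ and $\dim E_j=l$. Write each $A_i$ in block form $(A_i^{(pq)})$ with $l\times l$ blocks $A_i^{(pq)}=\mathcal P_pA_i\mathcal P_q$. The goal is to find unitaries $U_j$ on $E_j$ such that all conjugated blocks $U_p^*A_i^{(pq)}U_q$ are scalar multiples of $I_l$. After reordering the basis, the tuple is then $\oplus^l B$ with $B_i=(b_i^{pq})$.

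The tool is Theorem \ref{local spectral}(a) applied to the Hermitian pair $(A_1,W)$ for $W=W_{IJ}$ or a Hermitian combination of such products. Write $W^*=W_{I^rJ^r}$, where $r$ denotes the reversed sequences. Then $W+W^*$ and $i(W-W^*)$ are Hermitian, and I would check that they still satisfy \eqref{big spectrum}, possibly after an admissible perturbation via Theorem \ref{adm}. Since $\sigma^d_p(A_1,W)=l\{R_{IJ}=0\}$ and the points $1/\lambda_j$ are simple roots of $R_{IJ}(x_1,0)$, every branch $x_1=z_{1,j}(x_2)$ carries multiplicity $l$. The lowest-order identities from \eqref{local self} should then read:

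first order: $\mathcal P_j W\mathcal P_j=c_j\mathcal P_j$;

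second order: $\mathcal P_jW\mathcal T_jW\mathcal P_j$ is scalar on $E_j$, i.e. $\sum_{p\ne j}\frac{\lambda_j}{\lambda_p-\lambda_j}W^{(jp)}W^{(pj)}$ is scalar.

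This is the expected content of $\Omega^j_{\mathcal P,\mathcal P}$ at low signatures. Applying the first-order identity to $W=A_{i}\mathcal P_{p}A_{i'}$, together with its Hermitian parts, yields that $A_i^{(jp)}A_{i'}^{(pj)}$ is scalar on $E_j$ for all $i,i',p$. Longer words $W_{IJ}$ with distinct $j$'s give that all "cycle products" $A_{i_1}^{(j_0j_1)}A_{i_2}^{(j_1j_2)}\cdots A_{i_q}^{(j_{q-1}j_0)}$ are scalar on $E_{j_0}$. This is exactly why the $J$-indices are required to be pairwise distinct and $q\le m$: these are simple cycles through the eigenspaces.

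From the scalar cycle products I would build the $U_j$. Choose a spanning tree of the graph on $\{1,\dots,m\}$ in which $p\sim q$ when some $A_i^{(pq)}\neq 0$. If this graph is disconnected, treat each component separately. For an edge $p\sim q$ with $A_i^{(pq)}\ne0$, the scalar relation $A_i^{(pq)}A_i^{(qp)}=A_i^{(pq)}(A_i^{(pq)})^*=cI$ with $c>0$ shows that $A_i^{(pq)}$ is a multiple of a unitary $V_{pq}$. Fix $U_{j_0}=I$ at a root and propagate $U_q=V_{pq}^*U_p$ along tree edges, which makes every tree block scalar. For a non-tree block $A_{i}^{(pq)}$, close it into a simple cycle through tree edges. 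Scalarity of that cycle product, together with the fact that tree edges are unitary up to scalars, forces $U_p^*A_i^{(pq)}U_q$ to be scalar. The main obstacle is extracting exactly these cycle identities from Theorem \ref{local spectral}. One must show that the terms in $\sum_{\Omega^j_{\mathcal P,\mathcal P}}W$ involving $\mathcal T_j$ either vanish or reduce to the desired products. I would proceed by induction on $q$, using the lower-order scalar identities already obtained to absorb the $\mathcal T_j$ and $\beta$ terms. A secondary issue is checking that admissibility and the simple-root hypothesis on $R(z_1,0,\dots,0)$ transfer to each pair $(A_1,W)$, so that Theorem \ref{local spectral} applies.
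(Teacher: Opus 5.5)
Your necessity argument is fine. The architecture of your sufficiency argument is the same as the structure of the \cite{S2} proof recalled in the paper's final example: blocks in the eigenbasis of $A_1$, off-diagonal blocks that are scalar multiples of unitaries, scalar cycle products, and gauge-fixing by a block-diagonal unitary. The gap is in how you obtain the cycle identities. Since $W_{IJ}^*=W_{\widehat I\widehat J}$ and the entries of $J$ are distinct, the only words in $\mathscr{W}$ that are Hermitian are $A_i$ and $A_i\mathcal{P}_pA_i$. Every word carrying genuine cycle information is non-Hermitian, so Theorem~\ref{local spectral}(a) does not apply to it. Your remedy is to claim that $W+W^*$ and $i(W-W^*)$ ``still satisfy \eqref{big spectrum}'', but this does not follow from the hypothesis. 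The hypothesis is imposed only on the words themselves, and for $W^*$ it holds automatically by conjugation, so it adds nothing. For instance, take $l=2$, $A_1=\diag(\lambda_1,\lambda_1,\lambda_2,\lambda_2)$ and $W=E_{12}$. Then $\det(I-x_1A_1-x_2W)=(1-\lambda_1x_1)^2(1-\lambda_2x_1)^2$. For $W+W^*$, however, the first factor becomes $(1-\lambda_1x_1-x_2)(1-\lambda_1x_1+x_2)$, which is two distinct simple lines. No admissible change of variables repairs this, since such changes preserve multiplicities. Conversely, the obstacle you single out is not where the difficulty lies: you do not need to absorb $\mathcal{T}_j$ and $\beta$ terms by induction on $q$. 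Only the first-order coefficient is needed, and for the pair $(A_1,W_{IJ})$ that coefficient is already the cycle product $\mathcal{P}_{j_0}W_{IJ}\mathcal{P}_{j_0}$.

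The missing idea is to reverse the order of your steps and use normality. First, use only the Hermitian words. Taking $W=A_i$ makes the diagonal blocks scalar. Taking $W=A_i\mathcal{P}_pA_i$ gives $A_i^{(jp)}(A_i^{(jp)})^*=c\,I_l$, so every off-diagonal block is $0$ or a positive multiple of a unitary. Second, for an arbitrary $W_{IJ}$ you get a weaker first-order statement without any Hermiticity. Since $R_{IJ}(x_1,0)^l=\det(I-x_1A_1)$, the point $1/\lambda_{j_0}$ is a simple root of $R_{IJ}(x_1,0)$. Hence near $(1/\lambda_{j_0},0)$ the divisor is $l$ times a single smooth branch, and $A_1+tW_{IJ}$ has exactly one eigenvalue near $\lambda_{j_0}$ for small $t$. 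Because $\lambda_{j_0}$ is a semisimple eigenvalue of $A_1$, first-order perturbation theory then says that $\mathcal{P}_{j_0}W_{IJ}\mathcal{P}_{j_0}$ has a single eigenvalue on $\mathcal{P}_{j_0}\C^{ml}$. Third, by the first step this cycle product is a scalar multiple of a unitary, hence normal, and a normal matrix with one eigenvalue is scalar. With these scalar cycle products, your spanning-tree construction goes through as written. The cycle closing a non-tree block, or a second nonzero block on a tree edge, is simple, so the corresponding $J$ has distinct entries and $q\le m$.
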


\begin{remark} \label{explicit}\normalfont The proof of this theorem given in \cite{S2} not only establishes the reducibility of the tuple, but also explicitly constructs the corresponding reducing subspaces.
\end{remark}
\vspace{.2cm}

Suppose that $A$ is an admissible tuple of Hermitian matrices with $Q_A$ given by \eqref{char} and $\Gamma_j$ given by \eqref{component}. 
For each $i=1,...,n$ we denote by $\lambda_j^i$ the eigenvalues of $A_i$. Since $A$ is admissible, each reciprocal $1/\lambda_j^i$ belongs to a unique component $\Gamma_r$. For each pair $(i,j)$ we denote by $\mathcal{P}_j^i$ the orthogonal projection on the $\lambda_j^i$-eigenspace of $A_i$. These projections are given by \eqref{projections}. Finally, for each $1\leq i \leq n$ and for each subset $S\subset \{1,...,s\}$, we define the projection $\mathscr{P}_{i,S}$ as
\begin{equation}\label{projections1}
{\mathscr P}_{j,{S}}=\displaystyle \sum_{t\in {S}}\sum_{\lambda_j^i\in \Gamma_t} {\mathcal P}_{j}^i,
\end{equation}
and consider the tuple ${\mathscr P}_{A,S}:=({\mathscr P}_{1,{S}},...,{\mathscr P}_{n,{S}})$. 

\begin{theorem}[\cite{SY}, Theorem 4.1]\label{reducible tuples}
Let $A$ be a Hermitian tuple. Then the union $\cup_{j\in {S} }\Gamma_j$ corresponds to a common invariant subspace if and only if the characteristic polynomial  
\begin{equation}\label{gamma projections}
Q_{{\mathscr P}_{A,S}}(-1,z)=\big(z_1+\cdots +z_{n}-1\big)^{q}, 
\end{equation}	
where $q=\displaystyle \sum_{j\in S} deg(R_j)t_j$ and $t_j$ is the multiplicity of $\Gamma_j$. 

\end{theorem}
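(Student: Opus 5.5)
The plan is to read ``$\cup_{j\in S}\Gamma_j$ corresponds to a common invariant subspace'' as follows: there is a common invariant subspace $M$ with $\sigma_p^d(A|_M)=\sum_{j\in S}t_j\Gamma_j$. Since $A$ is Hermitian, such an $M$ is automatically reducing. Throughout I use the standing assumption of this subsection that $A$ is admissible with invertible entries. The key observation is that every ${\mathscr P}_{i,S}$ is a polynomial in $A_i$, by \eqref{projections}. Both directions will come down to showing that ${\mathscr P}_{1,S}=\cdots={\mathscr P}_{n,S}$ is the orthogonal projection onto $M$. I take \eqref{gamma projections} up to the sign $(-1)^{k-q}$ coming from the $-1$ in $Q(-1,z)$.

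\emph{Necessity.} Suppose $M$ is reducing and write $A=A'\oplus A''$. Then $Q_A=Q_{A'}Q_{A''}$, and by hypothesis $Q_{A'}=\prod_{j\in S}R_j^{t_j}$, so $Q_{A''}=\prod_{j\notin S}R_j^{t_j}$.
\begin{itemize}
\item Restricting to the coordinate line $L_i$ shows that the eigenvalues of $A_i'$ are exactly the $\lambda^i_j$ with $1/\lambda^i_j\in\Gamma_t$ for some $t\in S$, and those of $A_i''$ are the rest.
\item By admissibility these two sets are disjoint, since distinct components meet $L_i$ in distinct points.
\item Hence each eigenspace of $A_i$ with $1/\lambda^i_j\in\Gamma_t$, $t\in S$, lies in $M$, and these eigenspaces span $M$. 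So ${\mathscr P}_{i,S}=P_M$ for every $i$.
\end{itemize}
Then $\det(-I+\sum z_iP_M)=(-1)^{k-q}(z_1+\cdots+z_n-1)^q$ with $q=\operatorname{rank}P_M=\dim M=\sum_{j\in S}\deg(R_j)t_j$.

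\emph{Sufficiency.} Assume \eqref{gamma projections} holds.
\begin{itemize}
\item Setting $z=te_i$ shows that each ${\mathscr P}_{i,S}$ has rank $q$.
\item For $i\ne i'$, set $z_i=t$, $z_{i'}=-t$ and all other coordinates $0$. Then $\det(-I+t({\mathscr P}_{i,S}-{\mathscr P}_{i',S}))$ is constant in $t$. So the Hermitian matrix ${\mathscr P}_{i,S}-{\mathscr P}_{i',S}$ has all eigenvalues $0$, hence vanishes.
\item Thus all ${\mathscr P}_{i,S}$ equal a single orthogonal projection $P$. Since $P$ commutes with every $A_i$, its range $M$ is a common reducing subspace.
\end{itemize}
It remains to identify the spectrum of $A|_M$. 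Since $Q_{A|_M}$ divides $Q_A$, we can write $Q_{A|_M}=\prod_j R_j^{s_j}$ with $0\le s_j\le t_j$. On $L_i$ the zeros of $Q_{A|_M}(-1,\cdot)$ are precisely the reciprocals of the eigenvalues of $A_i|_M$, counted with multiplicity. By construction of $P$, these are exactly the points $\Gamma_t\cap L_i$ for $t\in S$, each with its full multiplicity. Admissibility says each $\Gamma_t$ meets $L_1$ in $\deg R_t$ distinct regular points, disjoint from those of the other components. Comparing multiplicities at these points therefore forces $s_t=t_t$ for $t\in S$ and $s_t=0$ for $t\notin S$.

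I expect the main obstacle to be this multiplicity bookkeeping. Both directions rely on the claim that eigenvalues of $A_i$ ``belonging'' to $\Gamma_t$ are well defined and carry exactly multiplicity $t_t$. This is where admissibility (Definition \ref{admissible} and Theorem \ref{adm}) is essential. If $A$ is not admissible, I would first apply a real admissible transformation close to $I_n$. This changes neither the lattice of invariant subspaces nor the factor structure of $Q_A$, though one must then note that the ${\mathscr P}_{i,S}$ are defined relative to the transformed tuple.
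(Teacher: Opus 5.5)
The paper does not prove this statement. It is quoted from \cite{SY}, so there is no in-text argument to compare yours with. Judged on its own, your proof is correct in the setting where the paper states the result: an admissible tuple of invertible Hermitian matrices. That setting is needed anyway for the $\mathscr{P}_{i,S}$ to be defined, so your closing remark about non-admissible tuples is harmless but unnecessary.

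Both directions work as written. In the necessity direction, admissibility makes the spectra of $A_i|_M$ and $A_i|_{M^\perp}$ disjoint, and this forces $\mathscr{P}_{i,S}=P_M$ for every $i$. In the sufficiency direction, the specialization $z_i=t$, $z_{i'}=-t$ is the key step, and it is entirely elementary. It shows that the Hermitian matrix $\mathscr{P}_{i,S}-\mathscr{P}_{i',S}$ has only the eigenvalue $0$, hence vanishes. Since each $\mathscr{P}_{i,S}$ is a polynomial in $A_i$, the common projection then commutes with the whole tuple. Your multiplicity comparison on $L_i$ correctly pins down $\sigma_p^d(A|_M)=\sum_{j\in S}t_j\Gamma_j$. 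That comparison only uses that each $1/\lambda^i_j$ lies on a single component: a zero of order $\alpha$ of $R_t$ on $L_i$ just multiplies both sides by $\alpha$.

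Two conventions you adopted should be stated explicitly in a write-up, because the literal statement fails without them. First, the identity can hold only up to the sign $(-1)^{k-q}$. Second, ``corresponds'' must mean a reducing subspace carrying each $\Gamma_j$, $j\in S$, with its full multiplicity $t_j$. Under a merely set-theoretic reading, a complementary summand that also carries some $\Gamma_j$ with $j\in S$ contributes $i$-dependent pieces to $\mathscr{P}_{i,S}$. The `only if' direction then genuinely fails: take a shifted Waterhouse pair plus the $1\times 1$ tuple carrying its linear factor.
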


\subsection{A Spectral Test of Reducibility}\label{test1}

\vspace{.2cm}

Our next goal is to construct an algorithm  for determining  whether a Hermitian tuple is reducible. We call this algorithm \textit{spectral test of reducibility}. The algorithm not only determines whether a tuple is reducible but also constructs the corresponding reducing subspaces.  Another advantage of the algorithm is that we are able to evaluate the number of steps necessary to reach a conclusion. An important part of the test is the construction of a specific algebraic tuple extension. This is the subject of the current subsection. Theorem \ref{repeated multiplicities not 1} below is the main result here.
As before, without loss of generality, we assume tuple $A$ consists of invertible matrices, and
\begin{equation}\label{spectrum2}
Q_A(z_0,z)=\displaystyle \prod_{j=1}^s (R_j(z_0,z))^{l_j}, \ \sigma_p^d(A)=\sum_{j=1}l_j\Gamma_j, 
\end{equation}
are the irreducible decompositions. Clearly, we have
 \begin{equation}\label{components 0}
l_1  \deg(R_1)+\cdots +l_s  \deg(R_s)= k.
\end{equation}

As mentioned in the previous sections, the eigenvalues of $A_i$ are the reciprocals of zeros of $\prod_{j=1}^s R_j(0,0...,z_i,0,...0)$. We denote by $\{\lambda_1^i,...,\lambda_{r_i}^i\}$ the set of distinct eigenvalues of $A_i$. Since $A_i$ is invertible, we have $\lambda_j^i\neq 0$ for each $j$. The multiplicity of $\lambda_j^i$ is equal to the sum of all $l_r$ such that $R_r(\tau_j^i)=0$, where $\tau_j^i=(0,...,0,1/\lambda_j^i,0,...,0)$. 

Like that in \eqref{projections} and \eqref{integral projection}, we denote by $\mathcal{P}_1^i,...,\mathcal{P}_{r_i}^i$ the  projections on the maximal invariant subspaces of $A_i$ associated with  $\lambda_1^i,...,\lambda_{r_i}^i$ respectively (each such subspace is the direct sum of all Jordan cells with eigenvalue $\lambda_j^i$). If $A_i$ is Hermitian, then each $\mathcal{P}_j^i$ is orthogonal, and its range is the $\lambda_j^i$-eigenspace for $A_i$.
If  $A$ is admissible, then each $\tau_j^i$ belongs to only one spectral component 
$\Gamma_p$,  the multiplicity of $\lambda_j^i$ is equal to $l_p$, and all numbers $r_i$ are the same and are equal to $\deg(R_1)+\cdots +\deg(R_s)$. 

\subsection{Spectrally Stable Extensions} Next, we will analyze how the local geometry of the joint spectrum changes when additional matrices are added to the tuple.   

 Suppose $A$ is an admissible tuple that is included in an extended tuple
$\widetilde{A}=(A_1,...,A_n,A_{n+1},...,A_N)$ of $k\times k$ matrices, and that 
\begin{equation}\label{extended}
Q_{\widetilde{A}}=\displaystyle \prod_{j=1}^q \widetilde{R}_j^{t_j}, \ \  \sigma_p^d(\widetilde{A})=\sum_{j=1}^q t_j \widetilde{\Gamma}_j,
\end{equation}
is the irreducible decomposition of the characteristic polynomial and the proper projective joint spectrum of $\widetilde{A}$. Here $\widetilde{R}_j$ is a polynomial of degree $m_j$ and  $\widetilde{\Gamma}_j:=\{z\in \C^N: \widetilde{R}_j(z)=0\}$.   

Since $A$ is admissible, for every eigenvalue $\lambda_j^i$ of $A_i$, the point 
\[\tau_j^i=(\underbrace{0,...,0}_{i-1}, 1/\lambda_j^i,\underbrace{0,...,0}_{n-i})\] belongs to a single component $\Gamma_r\subset \sigma_p(A)$. The point \[\widetilde{\tau}_j^i=\underbrace{(0,...,0,1/\lambda_j^i,0,...,0)}_{N} \] belongs to $\sigma_p(\widetilde{A})$, but condition b) of Definition \ref{admissible} is not necessarily satisfied. The possible scenarios for the geometry of $\sigma_p^d(\widetilde{A})$ near $\widetilde{\tau}_j^i$ are:

\begin{itemize}
\item[a)] $\widetilde{\tau}_j^i$ belongs to a single component $\widetilde{\Gamma}_m$ and the derivative of $\widetilde{R}_m$ with respect to $z_i$ at $\widetilde{\tau}_j^i$ does not vanish. In this case the multiplicity of $\widetilde{\Gamma}_m$ is equal to $t_m=l_r$, and for every admissible transformation $C$ of the tuple $\widetilde{A}$ which is close to the identity there is a single eigenvalue of $\widetilde{A}_1=(C\widetilde{A})_1$ that is close to $\lambda_j$, and this eigenvalue has multiplicity $t_m=l_r$.
\item[b)] $\widetilde{\tau}_j^i$ belongs to a single component $\widetilde{\Gamma}_m$,  but  
$\widetilde{R}_m(0,...,0,z_i,0,...,0)$ has zero of order $\alpha >1$  at $1/\lambda_j^i$ (as a function of $z_i$).
In this case  $l_r=\alpha t_m$,  
 and for an admissible transformation $C$ of $\widetilde{A}$ there are $\alpha$ eigenvalues of $\widetilde{A}_i=(C\widetilde{A})_i$  close to $\lambda_j^i$, and each of them has multiplicity $t_m$.
\item[c)] $\widetilde{\tau}_j^i $ belongs to multiple components $\widetilde{\Gamma}_{i_1}, ...,\widetilde{\Gamma}_{i_p}$, and $\widetilde{R}_{i_m} $ has zero of order $\alpha_m$ at $1/\lambda_j^i, \ m=1,...,p$. In this case 
$$l_r=\displaystyle \sum_{t=1}^p \alpha_mt_{i_m},$$ 
and for an admissible transformation $C$ of $\widetilde{A}$ that is close to the identity $\widetilde{A}_1$ has $\alpha_1+\cdots +\alpha_p$ eigenvalues of multiplicities $t_{i_1},...,t_{i_p}$ respectively.
\end{itemize}

As mentioned above, the multiplicities of spectral components are exactly the multiplicities of eigenvalues of matrices in an admissible tuple. Therefore, we conclude that for every admissible transformation $C$ of $\widetilde{A}$ the number of distinct eigenvalues of $\widetilde{A}_i=(C\widetilde{A})_i$ may only increase compared with the same number of $A_i$, and their corresponding multiplicities may only decrease (note that these numbers are the same for all $i$).

We also see that the situation when the numbers of distinct eigenvalues of $A_i$ and $\widetilde{A}_i$ are the same appears only in item a) above. This shows that, if, when passing to a bigger tuple, the number of distinct eigenvalues of $A_i$ and $\widehat{A}_i$ remains the same, then their respective multiplicities remain the same, and condition b) of Definition \ref{admissible} is satisfied for $\widetilde{A}_i$.


\begin{definition}
Let $\widetilde{A}$ be an algebraic extension of tuple ${A}$. The extension $A\subset \widetilde{A}$ is said to be {\em spectrally stable} if condition a) above is satisfied at every point $\widetilde{\tau}_j^i$.
\end{definition}

\begin{remark} \normalfont 
Observe that the following conditions are equivalent:
\begin{itemize}
\item the extension $A\subset \widetilde{A}$ is spectrally stable;
\item every component $\Gamma_i$ in $\sigma_p^d(A)$ is included in a single component $\widetilde{\Gamma}_R$ of $\sigma_p^d(\widetilde{A})$, and the multiplicities of these components are the same;
\item there exists an admissible transformation of $\widetilde{A}$ in the form
$$ C=\left [\begin{array}{cc} I_n & 0 \\ * & * \end{array} \right ].$$
\end{itemize}
	
\end{remark}


The following facts are immediate consequences of the discussions above.

\begin{proposition}\label{spectral avail}
Let $A$ be an admissible tuple. If the extension $A\subset \widetilde{A}$ is spectrally stable, then Theorem \ref{local spectral} is applicable at $\widetilde{\tau}_j^i$.
\end{proposition}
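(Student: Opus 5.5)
The plan is to check, one by one, the hypotheses of Theorem \ref{local spectral} for the extended tuple $\widetilde{A}$ at each point $\widetilde{\tau}_j^i$. Since coordinates may be permuted, we may treat $i=1$ without loss of generality. In that case Theorem \ref{local spectral} requires three things. First, $\widetilde{A}_1=A_1$ must be self-adjoint and invertible. Second, each reciprocal eigenvalue point $\widetilde{\tau}_j^1$ must lie on a single component $\widetilde{\Gamma}_m$ of $\sigma_p(\widetilde{A})$. Third, that point must be regular on $\widetilde{\Gamma}_m$, with $\partial\widetilde{R}_m/\partial z_1\neq 0$ there. The first condition is automatic: $A$ is an admissible Hermitian tuple of invertible matrices, and $A_1$ is unchanged in the extension. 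When $A$ is Hermitian we also take the added matrices $A_{n+1},\dots,A_N$ to be Hermitian, as required in Definition \ref{lowerindex}. This is needed to decide between part (a) and part (b) of Theorem \ref{local spectral}.

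The second and third conditions come directly from the definition of spectral stability. By definition, scenario a) holds at every $\widetilde{\tau}_j^i$. That scenario says exactly that $\widetilde{\tau}_j^i$ lies on a single component $\widetilde{\Gamma}_m$ and that $\partial\widetilde{R}_m/\partial z_i(\widetilde{\tau}_j^i)\neq 0$. A nonvanishing partial derivative implies that $d\widetilde{R}_m$ does not vanish, so the point is regular on $\widetilde{\Gamma}_m$, not counting multiplicity.

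To make the single-component claim rigorous, I would use the irreducible factorization \eqref{extended}. If $\widetilde{\tau}_j^i$ lay on two components, then restricting $Q_{\widetilde{A}}(-1,\cdot)$ to the coordinate line $L_i$ would give a zero at $1/\lambda_j^i$ from two distinct factors. That is scenario c), which is excluded. Likewise, a zero of order greater than one from a single factor is scenario b), also excluded. With scenario a) in force, the implicit function theorem gives the local parametrization \eqref{implicit} of $\widetilde{\Gamma}_m$ near $\widetilde{\tau}_j^i$. In that parametrization $z_i$ is a function of the remaining $N-1$ variables. Consequently the projections ${\mathcal P}_j(\hat z)$ in \eqref{projection for pencil} have constant rank $t_m=l_r$, and this is precisely the setting in which the proof of Theorem \ref{local spectral} works.

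The main obstacle is a mismatch between the hypotheses. Theorem \ref{local spectral} is stated under the assumption that every reciprocal eigenvalue of $A_1$ is of this good type. Spectral stability, however, is a pointwise condition at the $\widetilde{\tau}_j^i$, and we also need the multiplicity data to agree with those the theorem uses. I would settle this with the remark following the definition of spectral stability. That remark gives equality of multiplicities, $t_m=l_r$, and provides an admissible transformation of the form $\begin{pmatrix} I_n&0\\ *&*\end{pmatrix}$, which leaves $A_1,\dots,A_n$, and hence the points $\widetilde{\tau}_j^i$, unchanged. Applying Theorem \ref{local spectral} to this transformed tuple yields the identities \eqref{local self}. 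Those identities can then be rewritten in terms of $\widetilde{A}$ by a linear change of the added variables, since the transformation fixes the first $n$ coordinates.
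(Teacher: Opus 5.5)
Your proposal is correct and matches the paper, which treats the proposition as immediate: scenario a) holding at every $\widetilde{\tau}_j^i$ is exactly the hypothesis of Theorem \ref{local spectral} (single component, regular point, nonvanishing $z_i$-derivative), and $A_i$ is self-adjoint and invertible by the standing assumptions. Your last paragraph is unnecessary: spectral stability is already imposed at every $\widetilde{\tau}_j^i$, and Theorem \ref{local spectral} does not require the extended tuple itself to be admissible, so there is no mismatch to repair and no need to pass through the transformation $\begin{pmatrix} I_n&0\\ *&*\end{pmatrix}$.
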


Another straightforward proposition is:

\begin{proposition}\label{intermediate}
If $A\subset \widetilde{A}\subset \widetilde{\widetilde{A}}$ and the extension $A\subset \widetilde{\widetilde{A}}$ is spectrally stable, then so is the extension $A\subset \widetilde{A}$.
\end{proposition}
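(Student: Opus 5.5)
The plan is to work directly from the characterization of spectral stability in the Remark above: the extension $A\subset \widetilde{\widetilde{A}}$ is spectrally stable if and only if every component $\Gamma_i$ of $\sigma_p^d(A)$ lies in a single component of $\sigma_p^d(\widetilde{\widetilde{A}})$ with the same multiplicity. Equivalently, condition a) holds at every point $\widetilde{\widetilde{\tau}}_j^i$. The first step is to transfer this pointwise information from the big tuple down to the intermediate one, since the point $\widetilde{\tau}_j^i$ of $\sigma_p(\widetilde{A})$ is simply the restriction of $\widetilde{\widetilde{\tau}}_j^i$ to the first $\widetilde{N}$ coordinates.

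The key observation is that $\sigma_p^d(\widetilde{A})$ is the slice of $\sigma_p^d(\widetilde{\widetilde{A}})$ by the coordinate subspace $\{z_{\widetilde N+1}=\cdots=z_{N}=0\}$. This holds because $Q_{\widetilde{A}}(z_0,z')=Q_{\widetilde{\widetilde{A}}}(z_0,z',0)$. Likewise $\sigma_p^d(A)$ is the slice of $\sigma_p^d(\widetilde{A})$ by $\{z_{n+1}=\cdots=z_{\widetilde N}=0\}$. Each irreducible factor $\widetilde{R}_m$ of $Q_{\widetilde{A}}$ divides the restriction of some factor product of $Q_{\widetilde{\widetilde{A}}}$.

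I would argue by contraposition through the trichotomy a), b), c) listed before the definition. Suppose condition a) fails for $A\subset\widetilde{A}$ at some $\widetilde{\tau}_j^i$, so we are in case b) or c). Then, for an admissible transformation of $\widetilde{A}$ close to the identity, the $i$-th matrix has strictly more distinct eigenvalues than $A_i$, with strictly smaller multiplicities, near $\lambda_j^i$. Next I extend this admissible transformation of $\widetilde{A}$ to one of $\widetilde{\widetilde{A}}$ close to the identity. This uses Theorem \ref{adm}: admissible matrices are open and dense near $I_N$, so one can choose a block-form perturbation whose upper-left block approximates the given one. By the monotonicity remark (passing to a bigger tuple, the number of distinct eigenvalues can only increase and multiplicities can only decrease), the corresponding matrix for $\widetilde{\widetilde{A}}$ also has strictly smaller multiplicities near $\lambda_j^i$. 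This contradicts case a) holding for $A\subset\widetilde{\widetilde{A}}$ at $\widetilde{\widetilde{\tau}}_j^i$, where the multiplicity is preserved and equal to $l_r$.

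The main obstacle is justifying the monotonicity step rigorously for the pair $\widetilde{A}\subset\widetilde{\widetilde{A}}$. The earlier discussion was phrased for an admissible base tuple $A$, and $\widetilde{A}$ need not be admissible. I would handle this by working with eigenvalue counts of generic pencils $\widetilde{A}_*(w)$ near the coordinate direction $e_i$, rather than with admissibility of $\widetilde{A}$. The multiplicity of eigenvalues of $\widetilde{\widetilde{A}}_*(w,\epsilon)$ near $1/\lambda^i_j$ is upper semicontinuous as $\epsilon\to 0$, giving the needed inequality. A simpler alternative is to use the divisor picture directly: if $\Gamma_i\subset\widetilde{\widetilde{\Gamma}}_R$ with equal multiplicity, then the slice of $\widetilde{\widetilde{\Gamma}}_R$ contains a single irreducible component $\widetilde{\Gamma}$ of $\sigma_p^d(\widetilde{A})$ through $\Gamma_i$, with multiplicity $t\ge t_R=l_i$. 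Since $t\le l_i$ always holds by the trichotomy, we get equality.
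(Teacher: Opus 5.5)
Your proposal is correct and follows the reasoning the paper intends. The paper gives no proof and calls the result straightforward from the preceding observation that multiplicities can only drop along an extension, so along $A\subset\widetilde{A}\subset\widetilde{\widetilde{A}}$ they cannot drop at the first step if they do not drop overall. Your divisor-slicing alternative is the rigorous form of this argument: any component of $\sigma_p^d(\widetilde{A})$ inside the slice of $\widetilde{\widetilde{\Gamma}}_R$ and through $\widetilde{\tau}_j^i$ has multiplicity at least $t_R=l_r$, while restriction to the $z_i$-axis gives $l_r=\sum\alpha_m t_{i_m}$, which forces case a). It also makes the detour through block-form admissible transformations and semicontinuity unnecessary, and it handles the non-admissibility of $\widetilde{A}$ that you rightly flagged.
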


\vspace{.2cm}

Now, let $A$ be an admissible tuple of Hermitian matrices, and let $r$ be the number of distinct eigenvalues of $A_i$ (that is the same for all $i$).  
Following \eqref{W IJ} for 
$q\leq r$, \  $2\leq i_1,...,,i_q\leq  n$, and a set of pairwise distinct indices $j_1,...,j_{q-1}\in \{1,...,r\}$  the matrices $W_{IJ}$ were introduced in \eqref{W IJ}:
$$W_{IJ} = A_{i_1}\mathcal{P}_{j_1}A_{i_2}\mathcal{P}_{j_2}\cdots A_{i_q-1}\mathcal{P}_{j_q-1}A_{i_q}.$$
Here $I=(i_1,...,i_q), \ J=(j_1,...j_{q-1})$, and to simplify our notation we wrote $\mathcal{P}_j=\mathcal{P}_j^1$ for the projections described above.  
We denote by $\mathcal{A}_1(A)$ the tuple that includes all $W_{IJ}$ with $1\leq q\leq r$. Observe that since ${\mathcal P}_j\in \mathfrak{A}(A)$ (see display (7.1)), the extension $A\to \mathcal{A}_1(A)$ is algebraic. Moreover, we have $A_i\in \mathcal{A}_1(A)$ as $A_i=W_{IJ}$ with $I=(i)$ and $J=\emptyset$. Suppose that the characteristic polynomial and the proper projective joint spectrum of $\mathcal{A}_1(A)$ are given by
\begin{equation}\label{characteristic and spectrum 3}
Q_{\mathcal{A}_1(A)}= \widetilde{R}_1 ^{t_1}\cdots \widetilde{R}_m^{t_m}, \ \ \ \sigma_p^d(\mathcal{A}_1(A))=\displaystyle \sum_{j=1}^m t_j\widetilde{\Gamma}_j,
\end{equation}
where once again $\widetilde{R}_j$ are irreducible polynomials, and $\widetilde{\Gamma}_j= \{\widetilde{R}_j=0\}$. Like that in \eqref{components 0}, we obviously have
$$t_1 \deg(\widetilde{R}_1)+ \cdots +t_m \deg (\widetilde{R}_m)=k. $$

\vspace{.2cm}

\begin{theorem}\label{repeated multiplicities not 1}
Assume $A$ is an admissible Hermitian tuple. If the extension $A\subset \mathcal{A}_1(A)$ is spectrally stable and not all $t_j$ are equal to 1, then the tuple $A$ is reducible. 
\end{theorem}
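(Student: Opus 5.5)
Assume, to reach a contradiction, that $A$ is irreducible; the contrapositive route also produces the reducing subspace explicitly. Spectral stability of $A\subset\mathcal{A}_1(A)$ means each component $\Gamma_i$ of $\sigma_p^d(A)$ lies in a single component $\widetilde{\Gamma}_R$ of $\sigma_p^d(\mathcal{A}_1(A))$ with the same multiplicity. Hence some $\Gamma_p$ carries multiplicity $l_p=t_R>1$.

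The plan is to localize at the points $\tau_j^1=(1/\lambda_j^1,0,\dots,0)$ lying on components of multiplicity greater than one, and to apply Theorem \ref{kipp main} to a suitable compression. By Proposition \ref{spectral avail}, Theorem \ref{local spectral}(a) holds at every $\widetilde{\tau}_j^1$ for the extended tuple $\mathcal{A}_1(A)$.

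First, group the eigenvalues of $A_1$ according to the component $\widetilde{\Gamma}_R$ containing $\tau_j^1$. Set $S$ to be the components of $\sigma_p^d(\mathcal{A}_1(A))$ of multiplicity $t>1$, and let $\mathscr{P}$ be the sum of the spectral projections $\mathcal{P}_j^1$ with $\tau_j^1\in\widetilde{\Gamma}_R$ for some $R\in S$.

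Next, I would show that for the pairs $(A_1,W_{IJ})$ with $W_{IJ}\in\mathcal{A}_1(A)$, the restricted spectrum near $\widetilde{\tau}_j^1$ has the form $t\{R'(z_1,z_2)=0\}$, which is hypothesis \eqref{big spectrum}. This would come from spectral stability together with the fact that $(A_1,W_{IJ})$ is a sub-tuple of $\mathcal{A}_1(A)$. Specifically, intersecting $t_R\widetilde{\Gamma}_R$ with the coordinate plane spanned by $z_1$ and the $W_{IJ}$-coordinate gives multiplicity $t_R$ near each $\widetilde{\tau}_j^1$, by admissibility and the implicit function description \eqref{implicit}.

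Then I would use the vanishing identities \eqref{local self} for words built from $\mathcal{P}_j$, $\mathcal{T}_j$ and the $\alpha,\beta$ terms. These should show that the words $W_{IJ}$, compressed between the eigenprojections of one multiplicity-$t$ block, generate an algebra that acts on each eigenspace $\mathrm{ran}\,\mathcal{P}_j^1$ as $t$ identical copies. This is the mechanism in the proof of Theorem \ref{kipp main} (Remark \ref{explicit}), and it yields explicit subspaces $\mathcal{M}\subset \mathrm{ran}\,\mathscr{P}$. Such an $\mathcal{M}$ is spanned by vectors $\mathcal{P}_{j_1}A_{i_1}\mathcal{P}_{j_2}\cdots v$ with $v$ in a fixed one-dimensional slice of an eigenspace, and it is invariant under $A_1$ and every $W_{IJ}$ with $q\le r$.

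Finally, I would show that $\mathcal{M}$ is invariant under each $A_i$. Because the indices $j_1,\dots,j_{q-1}$ are pairwise distinct, at most $r$ steps are needed. So $\mathcal{M}$ is closed under all products $\mathcal{P}_{j}A_i\mathcal{P}_{j'}$, and therefore under $A_i=\sum_{j,j'}\mathcal{P}_jA_i\mathcal{P}_{j'}$. Since each $A_i$ is Hermitian, $\mathcal{M}$ is reducing. It is proper, having dimension at most $k/t<k$, which contradicts irreducibility. Alternatively, Theorem \ref{reducible tuples} could certify the invariance through the characteristic polynomial of $\mathscr{P}_{A,S}$.

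\textbf{Main obstacle.} The delicate step is the third one. Theorem \ref{kipp main} assumes the entire spectrum is a single $l\{R=0\}$, whereas here there may be several components, with multiplicity $1$ and multiplicity $>1$ mixed. I would first try to run the local argument of Theorem \ref{local spectral} only at the points $\widetilde{\tau}_j^1$ lying on multiplicity-$t$ components. The identities \eqref{local self} should force the off-diagonal coupling between eigenspaces on the same component to be "$t$-fold identical". I would also check that coupling to eigenspaces on other components cannot break invariance; this is exactly where including all words of length up to $r$ in $\mathcal{A}_1(A)$ should be needed.
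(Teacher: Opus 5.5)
\textbf{There is a genuine gap, in the case where the multiplicities are mixed.}

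Your handling of the case where every component of $\sigma_p^d(\mathcal{A}_1(A))$ has the same multiplicity $t\ge 2$ matches the paper: apply Theorem \ref{kipp main} directly. No localization is needed there. Hypothesis \eqref{big spectrum} is global, and it holds because $Q_{(A_1,W_{IJ})}$ is the restriction of $Q_{\mathcal{A}_1(A)}=\prod_j\widetilde R_j^{\,t}$ to a coordinate plane. It is therefore a $t$-th power of a polynomial of degree $k/t$, and invertibility of $A_1$ keeps the $z_1$-degree from dropping.

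The gap is the case where the multiplicities are not all equal, which is exactly where the theorem goes beyond Theorem \ref{kipp main}. You cannot apply Theorem \ref{kipp main} to a ``compression'' of $A$ to $\mathrm{ran}\,\mathscr{P}$ unless you already know $\mathrm{ran}\,\mathscr{P}$ is reducing. Your final step, invariance of $\mathcal{M}$ under $A_i=\sum_{j,j'}\mathcal{P}_jA_i\mathcal{P}_{j'}$, silently requires $\mathcal{P}_jA_i\mathcal{P}_{j'}=0$ whenever $\lambda_j$ and $\lambda_{j'}$ lie on components of different multiplicity. That decoupling is the whole content of this case, and you list it as the main obstacle without giving an argument. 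Separately, Theorem \ref{local spectral}(a) is not available for $\mathcal{A}_1(A)$ itself, because words such as $A_2\mathcal{P}_1A_3$ are not Hermitian.

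The missing idea is short:
\begin{enumerate}
\item Pass to the Hermitian subtuple consisting of $A_1,\dots,A_n$ and the words $A_i\mathcal{P}_rA_i$. By Proposition \ref{intermediate} it is a spectrally stable extension of $A$, so Theorem \ref{local spectral}(a) gives $\mathcal{P}_jA_i\mathcal{P}_rA_i\mathcal{P}_j=c_{ijr}\mathcal{P}_j$ for $j\neq r$.
\item Let $u$ be the $t_j\times t_r$ block of $\mathcal{P}_jA_i\mathcal{P}_r$. The identity reads $uu^*=c_{ijr}I_{t_j}$.
\item When $t_j>t_r$, the left side has rank at most $t_r<t_j$. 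This forces $c_{ijr}=0$, hence $u=0$.
\item Therefore the eigenspaces of $A_1$ attached to components of minimal multiplicity span a proper nonzero invariant subspace, which is reducing since the $A_i$ are Hermitian.
\end{enumerate}
This settles the mixed case outright, with no ``$t$ identical copies'' construction and no appeal to Theorem \ref{kipp main}.
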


\begin{proof}
We assume $A$ has an irreducible spectral decomposition \eqref{spectrum2}".  There are two different cases.

\textbf{1.} $t_i\neq t_j$ for some $i\neq j$. 

Consider $\widehat{{\mathcal{A}}}_1(A)$, the subtuple of $\mathcal{A}_1(A)$ which includes $A_1,...,A_n$ and only those of $W_{IJ}$ where $q=2$, and $i_1=i_2$,
$$\widehat{A}_1(A)=\{ A_1,...,A_m, A_2\mathcal{P}_1A_2,...,A_m\mathcal{P}_sA_m\}. $$

 Since $A\subset \widehat{\mathcal{A}}_1(A)\subset \mathcal{A}_1(A)$, Proposition \ref{intermediate} implies that the extension $A\subset \widehat{\mathcal{A}}_1(A)$ is spectrally stable.
 Since $\widehat{\mathcal{A}}_1(A)$  is a Hermitian tuple, 
Proposition \ref{spectral avail} shows that Theorem \ref{local spectral} (a) is applicable to it. Therefore, for every $2\leq i\leq n$ and every $j\neq r$, we have
\begin{equation} \label{block times adjoint}
\mathcal{P}_jA_i\mathcal{P}_rA_i\mathcal{P}_j=(\mathcal{P}_jA_i\mathcal{P}_r)(\mathcal{P}_jA_i\mathcal{P}_r)^*=c_{ijr}\mathcal{P}_j,
\end{equation}
where $c_{ijr}\in \R$ is a constant. Without loss of generality we may assume that $$t_1=...=t_{i_1}<t_{i_1+1}=...=t_{i_2}<...<t_{i_{p-1}}=...=t_s.$$ 
We will be considering \eqref{block times adjoint} with $r=1,...,i_1$ and $j\geq i_1+1$. 

The only non-trivial entries of the $k\times k$ matrix $\mathcal{P}_jA_i\mathcal{P}_r$ are those belonging to the rows corresponding to $\mathcal{P}_j$ and columns corresponding to $\mathcal{P}_r$. We denote this non-zero $t_j\times t_r$ block by $u_{ijr}$. Then $u_{ijr}$ is a matrix with the number of rows exceeding the number of columns. Equation \eqref{block times adjoint} yields
$$ u_{ijr}u_{ijr}^*=c_{ijr}I_{l_j},$$
where $I_{t_j}$ is the $t_j\times t_j$ identity matrix, and this implies $c_{ijr}=0$. Indeed, if $c_{ijr}\neq 0$, then $l_r$-dimensional vectors represented by the rows of $u_{ijr}$ must form an orthogonal non-trivial system, which is impossible, as $t_j>t_r$. 

Thus, we see the range of $\mathcal{P}_1\oplus \cdots \oplus \mathcal{P}_{i_1}$ is invariant under the action of $A_j, \ j=1,...,n$, establishing the result in this case.

\vspace{.2cm}

\textbf{2.} $t_i=t_j=t\geq 2$ for all $i,j$.

\vspace{.2cm}

In this case the result follows directly from Theorem \ref{kipp main}. Indeed, 
$$\sigma_p^d(A_1, W_{IJ})=t \displaystyle \sum_{j=1}^s \widetilde{\Gamma}_j.$$
Since $A_1$ is invertible, each $\widetilde{R}_j(z)$ contains the monomial $z_1^{\deg(\widetilde{R})_j}$ with a  non-trivial coefficient, which implies that the condition of Theorem \ref{kipp main} is satisfied.
\end{proof}

\begin{remark} \normalfont The first part of the proof actually constructs a common reducing subspace for tuple $A$. For part 2, as we mentioned in Remark \ref{explicit}, this construction follows the procedure presented in \cite{S2}.
\end{remark}
\vspace{.2cm}

\section{The Characteristic Graph of Matrix Tuples}\label{characteristic graph}

This section presents a geometric criteria for the irreducibility of matrix tuples. The approach here relates the common invariant subspace problem to graph theory. We associate with a tuple some specific digraphs, whose strong connectedness is related to a tuple's reducibility. An analysis of the graph is another part of the spectral test of reducibility and the conditions for the validity of Kippenhahn's conjecture. 


\subsection{Strongly Connected Graphs}

\begin{figure}[htbp]
    \centering
    \begin{subfigure}[b]{0.45\textwidth}
        \centering
        \begin{tikzpicture}[>=Stealth, node distance=2cm, main/.style = {draw, circle, fill=black, inner sep=2pt}]
            \node[main, label=below:1] (1) {};
            \node[main, label=below:2] (2) [right of=1] {};
            
            \draw[->, thick] (1) -- (2);
        \end{tikzpicture}
        \caption{Directed graph with a single edge}
        \label{fig:graph_a}
    \end{subfigure}
    \hfill
    \begin{subfigure}[b]{0.45\textwidth}
        \centering
        \begin{tikzpicture}[>=Stealth, node distance=2.5cm, main/.style = {draw, circle, fill=black, inner sep=2pt}]
            \node[main, label=above right:1] (1) {};
            \node[main, label=below:2] (2) [below right of=1] {};
            \node[main, label=below:3] (3) [below left of=1] {};
            
            \path[->, thick]
                (1) edge [loop above] node {} (1)
                (2) edge [loop right] node {} (2)
                (3) edge [loop left] node {} (3)
                (1) edge [bend left=15] node {} (2)
                (2) edge [bend left=15] node {} (1)
                (2) edge [bend left=15] node {} (3)
                (3) edge [bend left=15] node {} (2)
                (3) edge [bend left=15] node {} (1)
                (1) edge [bend left=15] node {} (3);
        \end{tikzpicture}
        \caption{Graph for Example 9.3 $(A_1, A_2)$}
        \label{fig:graph_b}
    \end{subfigure}

    \vspace{2cm}

    \begin{subfigure}[b]{0.45\textwidth}
        \centering
        \begin{tikzpicture}[>=Stealth, node distance=2.5cm, main/.style = {draw, circle, fill=black, inner sep=2pt}]
            \node[main, label=right:1] (1) {};
            \node[main, label=below:2] (2) [below right of=1] {};
            \node[main, label=below:3] (3) [below left of=1] {};
            
            \path[->, thick]
                (1) edge [loop above] node {} (1)
                (2) edge [loop right] node {} (2)
                (3) edge [loop left] node {} (3);
        \end{tikzpicture}
        \caption{Graph for Example 9.3 $(B_1, B_2)$}
        \label{fig:graph_c}
    \end{subfigure}
    \hfill
    \begin{subfigure}[b]{0.45\textwidth}
        \centering
        \begin{tikzpicture}[>=Stealth, node distance=2cm, main/.style = {draw, circle, fill=black, inner sep=2pt}]
            \node[main, label=right:1] (1) {};
            \node[main, label=below:2] (2) [below right of=1] {};
            \node[main, label=right:3] (3) [below left of=2] {};
            \node[main, label=below:4] (4) [above left of=3] {};
            
            \path[->, thick]
                (1) edge [loop above] node {} (1)
                (2) edge [loop right] node {} (2)
                (3) edge [loop below] node {} (3)
                (4) edge [loop left] node {} (4)
                (1) edge [bend left=15] node {} (3)
                (3) edge [bend left=15] node {} (1)
                (4) edge [bend left=15] node {} (2)
                (2) edge [bend left=15] node {} (4);
        \end{tikzpicture}
        \caption{Graph for Example 9.10}
        \label{fig:graph_d}
    \end{subfigure}
    \caption{Characteristic graphs}
\end{figure}

\vspace{2mm}

A {\em directed graph}, also called {\em digraph}, consists of a finite number of vertices $\{1,...,k\}$ and  edges connecting pairs of vertices in a fixed direction. An edge from vertex $i$ to  vertex $j$ is denoted by $\overrightarrow{ij}$. Given a directed graph $\mathcal{G}$, a path 
$$i_1\to i_2 \to \cdots \to i_p$$
is a $\mathcal{G}$-path, if $\overrightarrow{i_ri}_{r+1}\in \mathcal{G}$ for all $r=1,...,p-1$.
Two vertices $i$ and $j$ are said to be strongly $\mathcal{G}$-connected, if there are $\mathcal{G}$-paths
$$ i=i_1\to i_2 \to \cdots \to i_p=j \ \mbox{and} \ j=j_1\to j_2 \to \cdots \to j_s=i.$$
 A subset $S\subset \{1,...,.k\}$ is {\em strongly $\mathcal{G}$-connected}, if every pair of vertices in $S$ is strongly $\mathcal{G}$-connected. If the whole set of vertices $\{1,...,k\}$ is strongly $\mathcal{G}$-connected, the graph $\mathcal{G}$ is said to be strongly connected.

Observe that not every directed graph has strongly connected subsets. For example, the graph with 2 vertices $\{1,2\}$ and a single edge $\overrightarrow{12}$ has no strongly connected subsets (Fig. 2 (A)). If a directed graph $\mathcal{G}$ has a strongly connected subset $S$, then $S$ is included in a unique maximal strongly connected subset, which is called a strongly connected component.

Consider a tuple $A=(A_1,...,A_n)$, where $A_m=(a_{ij}^m)_{i,j=1}^k$. 
\begin{definition}\label{graph}
The {\em characteristic graph} of $A$, denoted by $\mathcal{G}_{A}$, is defined as follows:
\begin{itemize}
\item the vertices of $\mathcal{G}_{A}$ is the set $\{1,...,k\}$;
\item a directed edge $\overrightarrow{ij}$ is present in $\mathcal{G}_{A}$ if and only if  there exists $1\leq m\leq n$ such that $a_{ij}^m\neq 0$.	
\end{itemize}
\end{definition}

\begin{remark}\label{ext}
\normalfont It follows from the Definition \ref{graph} that, if $A\subset \widetilde{A}$ and $\mathcal{G}_A$ is strongly connected, then $\mathcal{G}_{\widetilde{A}}$ is strongly connected as well.
\end{remark}

In general, the characteristic graph might be different for similar matrices. 
\begin{example}
\normalfont  Consider matrices
$$A_1=I_3, \ \ A_2=\left ( \begin{array}{ccc} 1&1&1\\1&1&1 \\1&1 &1 \end{array}\right ). $$ 
The characteristic graph $\mathcal{G}_A$ has 3 vertices, and every edge $\overrightarrow{ij}, \ i,j=1,2,3$ is in it. Thus, $\mathcal{G}_A$ is strongly connected (Fig. 2 (B)).

Since $A_2$ is similar to $B_2$ below, tuple $A$ is similar to tuple $B=(B_1,B_2)$,
$$ B_1=I_3, \ B_2=\left ( \begin{array}{ccc} 3&0&0\\0&0&0 \\0&0 &0 \end{array}\right ).$$ 
But $\mathcal{G}_B$ contains only edges $\overrightarrow{ii}$ (Fig. 2 (C)) and thus is not strongly connected.
\end{example}

The following proposition is straightforward.

\begin{proposition}\label{transform invariant}
Let $A=(A_1, ..., A_n)$ be a tuple of $k\times k$ matrices. If $C\in GL(n)$ is sufficiently close to the identity matrix $I_n$, then $\mathcal{G}_{A}=\mathcal{G}_{CA}$.
\end{proposition}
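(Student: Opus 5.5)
The plan is to prove the proposition directly from Definition \ref{graph}. The only thing to show is that, for $C$ near $I_n$, the zero patterns of the entries of the transformed tuple match those of $A$ in the aggregate sense the graph records. Write $CA=(\widehat{A}_1,\dots,\widehat{A}_n)$ with $\widehat{A}_m=\sum_{s=1}^n c_{ms}A_s$, as in \eqref{transform}. Then the $(i,j)$ entry of $\widehat{A}_m$ is
\[
\widehat{a}^m_{ij}=\sum_{s=1}^n c_{ms}a^s_{ij}.
\]
For a fixed pair $(i,j)$, let $v_{ij}=(a^1_{ij},\dots,a^n_{ij})^T\in\C^n$. The vector of the corresponding entries of the transformed tuple is then $Cv_{ij}$.

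The edge $\overrightarrow{ij}$ belongs to $\mathcal{G}_A$ exactly when $v_{ij}\neq 0$. It belongs to $\mathcal{G}_{CA}$ exactly when $Cv_{ij}\neq 0$. Since every $C\in GL(n)$ is invertible, $Cv_{ij}=0$ holds if and only if $v_{ij}=0$. Hence the edge sets coincide for every invertible $C$, not merely for those close to $I_n$. The vertex sets are both $\{1,\dots,k\}$, so the graphs are equal.

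I expect no real obstacle. The only point to check is that the graph depends on the entries only through the vanishing or nonvanishing of the whole vector $v_{ij}$, not of the individual entries. Individual entries can change from nonzero to zero under $C$, even for $C$ close to $I_n$: a zero entry can become nonzero, and conversely. So the argument should not go through entrywise continuity. It should use the "there exists $m$" clause of Definition \ref{graph} together with invertibility, as above. The hypothesis that $C$ is close to $I_n$ is then superfluous, though harmless; it matches how admissible transformations near the identity are used in Theorem \ref{adm}.
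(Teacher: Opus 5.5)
Your proof is correct, and it takes a different, stronger route than the paper.

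The paper argues entrywise by continuity. If $\overrightarrow{ij}\in\mathcal{G}_A$, then some $a^m_{ij}\neq 0$. Since $\widetilde{a}^m_{ij}=a^m_{ij}+\sum_s (c_{ms}-\delta_{ms})a^s_{ij}$, that same entry stays nonzero once $C$ is close enough to $I_n$. The neighborhood depends on $A$, which is harmless because there are only finitely many nonzero entries. If instead $a^m_{ij}=0$ for all $m$, linearity keeps every $\widetilde{a}^m_{ij}$ equal to zero.

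You package the $(i,j)$ entries into $v_{ij}\in\C^n$ and note that the transformed entries are $Cv_{ij}$. You then use only the injectivity of $C$. This gives the conclusion for every $C\in GL(n)$, with no neighborhood depending on $A$. It also makes clear that $\mathcal{G}_A$ depends only on $\SP\{A_1,\dots,A_n\}$. The paper's local version is enough for how it is used, together with admissible transformations near $I_n$ (Theorem \ref{adm}), but your argument is cleaner and more general.

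One correction to your commentary. You say a nonzero entry can become zero ``even for $C$ close to $I_n$.'' That is false in the sense the proposition intends. For a fixed $A$, a small enough neighborhood of $I_n$ keeps every nonzero $a^m_{ij}$ nonzero. So the entrywise-continuity argument does not fail; it is exactly the paper's proof. Only the other direction, a zero entry becoming nonzero, can occur. That is harmless, because the graph records only whether some entry in position $(i,j)$ is nonzero.
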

\begin{proof} Write $C=(c_{ij})_{i,j=1}^n$. If $\overrightarrow{ij}\in \mathcal{G}_{A}$, then for some $1\leq m\leq n$, \ $a^m_{ij} \neq 0$. If $C$ is close to the identity, then $\widetilde{a}^m_{ij}\neq 0$, where $(CA)_m=(\widetilde{a}^m_{ij})_{i,j=1}^k$, and we see that $\overrightarrow{ij} \in \mathcal{G}_B$. 
If $\overrightarrow{ij}\notin \mathcal{G}_A$, then $a_{ij}^m=0$ for all $m=1,...,k$, and hence $\widetilde{a}_{ij}^m=0$ for all $m=1,...,k$ as well, so that $\overrightarrow{ij}\notin \mathcal{G}_B$.
\end{proof}


The following theorem unveils a somewhat surprising link between the characteristic graph $\mathcal{G}_A$ and the reducibility of $A$

\begin{theorem}\label{not connected}
A matrix tuple $A$ is reducible if and only if there is a tuple $B$ similar to $A$ such that $\mathcal{G}_B$ is not strongly connected.
\end{theorem}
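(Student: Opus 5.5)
Here "reducible" is read as having a nontrivial common invariant subspace, which is the sense in which similarity is the natural equivalence. The plan rests on one standard fact: a square matrix pattern whose digraph is not strongly connected can be brought to block upper triangular form by a permutation similarity.

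For necessity, suppose $M\subset\C^k$ is a common invariant subspace of $A_1,\dots,A_n$ with $0<\dim M=p<k$. Choose a basis $v_1,\dots,v_p$ of $M$ and extend it to a basis $v_1,\dots,v_k$ of $\C^k$. Let $S$ be the matrix with columns $v_i$ and set $B_m=S^{-1}A_mS$. Each $B_m$ then has the block form (\ref{decom}), so its entries $b^m_{ij}$ vanish whenever $i>p\ge j$. Consequently $\mathcal{G}_B$ contains no edge from a vertex in $\{1,\dots,p\}$ to a vertex in $\{p+1,\dots,k\}$; here we use the convention $\overrightarrow{ji}\in\mathcal{G}_B$ iff $b_{ij}^m\neq0$, in which the paper's convention is applied to the transpose. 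Whichever convention is used, edges only go one way between the two sets $\{1,\dots,p\}$ and $\{p+1,\dots,k\}$. Hence no path returns from one set to the other, and $\mathcal{G}_B$ is not strongly connected. If unitary reducibility were intended, $S$ could be taken unitary and the same argument would apply.

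For sufficiency, suppose $B=S^{-1}AS$ and $\mathcal{G}_B$ is not strongly connected. Pick a vertex $i_0$ and let $R$ be the set of vertices reachable from $i_0$ by $\mathcal{G}_B$-paths, including $i_0$ itself. I will show $R$ is a proper subset, taking the convention that an edge $\overrightarrow{ij}$ records the entry in row $i$, column $j$. Since the graph is not strongly connected, there exist vertices $a,b$ with $b$ not reachable from $a$, so choosing $i_0=a$ gives $R\neq\{1,\dots,k\}$. The set $R$ is closed under outgoing edges: if $i\in R$ and $b^m_{ij}\neq0$ for some $m$, then $j\in R$. Now take the transposed viewpoint. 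Let $U=\SP\{e_j: j\notin R\}$, which is nontrivial and proper. For $j\notin R$, the vector $B_m e_j$ is the $j$th column, and its entries are $b^m_{ij}$. If $i\in R$ and $b^m_{ij}\neq0$, then $\overrightarrow{ij}$ is an edge, forcing $j\in R$, a contradiction. So $b^m_{ij}=0$ for all $i\in R$, and therefore $B_me_j\in U$. Thus $U$ is a common invariant subspace of $B$, and $SU$ is a common invariant subspace of $A$.

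The main obstacle is the bookkeeping of edge direction versus invariance, that is, whether the right object is a forward-closed set or its complement. The argument above handles this by taking the complement of the reachable set, and it works for either orientation convention. The only other subtlety is that the statement allows a general similarity. With that reading the theorem is exactly "has a nontrivial common invariant subspace," and both directions then follow from the two elementary steps described.
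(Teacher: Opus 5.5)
Your proof is correct. The necessity half matches the paper's: a basis adapted to the invariant subspace puts every $B_m$ in block upper triangular form, so edges between the two index blocks run only one way.

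For sufficiency you take a genuinely different and more elementary route. The paper proceeds as follows:
\begin{enumerate}
\item It adds a multiple of the identity to $B_1$ so that every vertex carries a loop.
\item It takes a proper strongly connected component $S$ and uses its maximality to get $i,j$ with no $\mathcal{G}_B$-path from $i$ to $j$.
\item It notes that the $(i,j)$ entry of every product $B_{m_1}\cdots B_{m_p}$ then vanishes, so $E_{ij}\notin\mathfrak{A}(B)$.
\item It concludes by Burnside's theorem that a nontrivial common invariant subspace exists.
\end{enumerate}
You instead write the subspace down. The complement of the forward-reachable set $R$ is closed under predecessors, so $\SP\{e_j: j\notin R\}$ is invariant under every $B_m$.

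Your route avoids Burnside, so it works over any field. It needs neither the loop-adding step nor strongly connected components, and it yields the invariant subspace explicitly. In effect you prove directly the instance of Burnside's theorem that the paper invokes. The paper's phrasing in terms of matrix units in $\mathfrak{A}(B)$ has one advantage: it matches the converse mechanism in Theorem \ref{graph connected}, where strong connectivity is used to place every $E_{ij}$ in $\mathfrak{A}(\widetilde{A})$.

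One small point: concluding $R\neq\{1,\dots,k\}$ from $i_0=a$ requires $a\neq b$, since $R$ contains $i_0$. This is automatic when $k\geq 2$. If each vertex were reachable from every other vertex, each vertex would lie on a cycle and the graph would be strongly connected. For $k=1$ the statement is degenerate under the paper's convention that a loopless vertex is not strongly connected, and the paper's identity-shift step fails there too.
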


\begin{proof}
Suppose that tuple $A$ has a common invariant subspace $L$. Choose a basis $\{ e_1,...,e_r\}$  for $L$ and a basis $\{e_{r+1},...,e_k\}$ for $L^\perp$, and let $B_m=(b^m_{ij})$ be the matrix of the transformation of $\C^k$ given by $A_m$ written in the basis $\{e_m: m=1,...,k\}$. Then the tuple $B$ is similar to $A$. Since  $L$ is invariant, for every $j\geq r+1, \ i\leq r$ we have $b_{ji}^m=0$ for all $m=1,...,n$. This, evidently, implies that there is no $\mathcal{G}_B$-path from any $j\geq r+1$ to any $i\leq r$. Hence $\mathcal{G}_B$ is not strongly connected, which proves the ``only if" direction.

Suppose $B$ is similar to $A$ and $\mathcal{G}_B$ is not strongly connected. Observe that adding a scalar multiple of the identity to one of the matrices in $B$, say $B_1$, does not make  the characteristic graph of the new tuple strongly connected. Moreover, adding a constant multiple of the identity also does not change the lattice of invariant subspaces. Hence, without loss of generality, we may assume that $b_{ii}^1\neq 0$ for each $i$. This implies that every single vertex is a strongly connected subset for $\mathcal{G}_B$, showing that the collection of strongly connected subsets is not empty. Thus, there exists a strongly connected component of $\mathcal{G}_A$ with vertices $S\subset \{1,...,k\}$. Since $\mathcal{G}_B$ is not strongly connected, $S$ must be a proper subset of $\{1,...,k\}$.  The elementary matrices $E_{ij}, 1\leq i, j\leq k$, form a basis of $M_k(\C)$, and they satisfy the relations
$$E_{ij}E_{ml}= \left\{ \begin{array}{cc} E_{il} & \mbox{if} \ j=m \\ 0 & \mbox{if} \ j\neq m \end{array}  \right. . $$

We claim that if $i\in S$ and $j\notin S$, then either $E_{ij}$ or $E_{ji}$ does not belong to $\mathfrak{A}(B)$. Then, Burnside's theorem would imply that $B$ (and hence $A$) has a common invariant subspace.

To substantiate our claim, we first observe that, since $S$ is maximal, either there is no $\mathcal{G}_B$-path from $i$ to $j$, or there is no $\mathcal{G}_B$-path from $j$ to $i$. 
Suppose the former occurs. Then for every set of distinct indices $i=i_1, i_2,...,i_p=j$ and every $m_1,...,m_{p-1}\in \{1,...,k\}$ we have
\begin{equation}\label{product 0}
\displaystyle \prod_{r=1}^{p-1} b_{i_r i_{r+1}}^{m_r}=0.	
\end{equation}
To show that $E_{ij}\notin \mathfrak{A}(B)$, we write
$$ B_m=\displaystyle \sum_{i,j=1}^k b_{ij}^m E_{ij}.$$
Then
$$B_{i_1}B_{i_2}\cdots B_{i_p}=\displaystyle \prod_{l=1}^p \Big(\sum_{i,j=1}^k b_{ij}^{i_l} E_{ij}\Big)=\sum_{s_1,...,s_{p+1}}\Big(\prod_{l=1}^p b^{i_l}_{s_p s_{p+1}}\Big)E_{s_1 s_{p+1}}.$$
In the case $s_1=i, \ s_{p+1}=j$, equation \eqref{product 0} shows that the coefficient for $E_{ij}$ in the decomposition above is 0, which implies that $E_{ij}\notin \mathfrak{A}(B)$.
\end{proof}

\subsection{Strong Connectedness and Irreducibility}
Since admissible transformations of matrix tuples always exist (Theorem 7.4) and they do not change the lattice of common invariant subspaces, in the subsequent discussions we assume without loss of generality that $A$ is an admissible tuple, and we drop this assumption while stating results.  We shall also assume in this subsection that the characteristic polynomial $Q_A$ has no repeated factors. Then for every $1\leq m\leq n$ the matrix $A_m$ has $k$ distinct eigenvalues and therefore is diagonalizable. Let $C\in GL(k)$ be a matrix that diagonalizes $A_1$. Consider the tuple 
\[\widetilde{A}=(\widetilde{A}_1, ..., \widetilde{A}_n):=(CA_1C^{-1}, ..., CA_nC^{-1})\] and denote by $\widetilde{\mathcal{G}}_A$ its characteristic graph, i.e., $\widetilde{\mathcal{G}}_A=\mathcal{G}_{\widetilde{A}}$. 

It is worth noting that whether $\widetilde{\mathcal{G}}_A$ is strongly connected or not does not depend on $C$. Indeed, suppose that $\lambda_1,...,\lambda_k$ are the eigenvalues of $A_1$. If $CAC^{-1}=\Lambda:=\diag(\lambda_1,...,\lambda_k)$, then for any other matrix $D\in GL(k)$
such that $DA_1D^{-1}=\Lambda$, the matrix $CD^{-1}$ must commute with $\Lambda$. Since $\lambda_i\neq \lambda_j$ for all $i\neq j$, $CD^{-1}$ must be diagonal with non-trivial diagonal entries, as it is invertible. Hence, $(CA_mC^{-1})_{ij}\neq 0 \iff (DA_mD^{-1})_{ij} \neq 0$, so the characteristic graphs are the same. If we do not fix the order of diagonal entries in $\Lambda$, we have to add a possible permutation of basic vectors, which also does not affect the strong connectedness of the graph. 

\begin{theorem}\label{graph connected}
Suppose the characteristic polynomial $Q_A$ has no repeated factors. Then tuple $A$ is irreducible if and only if $\widetilde{\mathcal{G}}_A$ is strongly connected.	
\end{theorem}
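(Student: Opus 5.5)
The plan is to prove both directions directly in the basis that diagonalizes $A_1$, using the fact that $\widetilde{A}$ is similar to $A$, so the two tuples have isomorphic lattices of common invariant subspaces. As in the discussion preceding the statement, we take $A$ admissible (Theorem \ref{adm}). Since $Q_A$ has no repeated factors, $A_1$ then has $k$ distinct eigenvalues, so $\widetilde{A}_1=\Lambda=\diag(\lambda_1,\dots,\lambda_k)$ with $\lambda_i\neq\lambda_j$ for $i\neq j$. We have already seen that whether $\widetilde{\mathcal{G}}_A$ is strongly connected does not depend on the choice of $C$.

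\emph{If $\widetilde{\mathcal{G}}_A$ is not strongly connected, then $A$ is reducible.} This is the ``if'' direction of Theorem \ref{not connected} applied with $B=\widetilde{A}$. That theorem yields a common invariant subspace for $\widetilde{A}$, and hence for $A$, because $\widetilde{A}=CAC^{-1}$. No hypothesis on $Q_A$ is needed for this direction.

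\emph{If $A$ is reducible, then $\widetilde{\mathcal{G}}_A$ is not strongly connected.} Here the distinctness of the eigenvalues is essential. Suppose $\widetilde{A}$ has a nontrivial common invariant subspace $L$.
\begin{itemize}
\item Since $L$ is invariant under $\Lambda$, and $\Lambda$ has simple spectrum, $L$ is spanned by eigenvectors of $\Lambda$ (for instance, the spectral projections from \eqref{projections} are polynomials in $\Lambda$ and therefore preserve $L$). Hence $L=\SP\{e_i: i\in S\}$ for some proper nonempty subset $S\subset\{1,\dots,k\}$.
\item Write $\widetilde{A}_m=(\widetilde{a}^m_{ij})$. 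Then $\widetilde{A}_m e_i=\sum_j \widetilde{a}^m_{ji}e_j$, so invariance of $L$ gives $\widetilde{a}^m_{ji}=0$ for every $i\in S$, $j\notin S$, and every $m$.
\item By Definition \ref{graph}, this means $\widetilde{\mathcal{G}}_A$ has no edge $\overrightarrow{ji}$ with $j\notin S$ and $i\in S$. Consequently no $\widetilde{\mathcal{G}}_A$-path leads from a vertex outside $S$ into $S$, so $\widetilde{\mathcal{G}}_A$ is not strongly connected.
\end{itemize}

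The main point requiring care is the first bullet: an invariant subspace of a diagonal matrix with distinct entries must be a coordinate subspace. This is exactly where the assumption that $Q_A$ has no repeated factors enters, through admissibility and the resulting distinct eigenvalues. Without it, $L$ need not be a coordinate subspace, and the similarity example above shows the graph criterion can then fail.
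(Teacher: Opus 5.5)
Your proof is correct. The implication ``$\widetilde{\mathcal{G}}_A$ not strongly connected $\Rightarrow$ $A$ reducible'' is handled exactly as in the paper, by applying Theorem~\ref{not connected} to $B=\widetilde{A}$.

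For the converse you take a genuinely different, dual route.
\begin{itemize}
\item The paper works on the algebra side. Since $\widetilde{A}_1$ is diagonal with simple spectrum, its rank-one spectral projections $\mathcal{P}_i$ lie in $\mathfrak{A}(\widetilde{A})$. The identity $\mathcal{P}_i\widetilde{A}_m\mathcal{P}_j=\widetilde{a}^m_{ij}E_{ij}$ then places $E_{ij}$ in $\mathfrak{A}(\widetilde{A})$ for every edge $\overrightarrow{ij}$. Multiplying along paths produces every matrix unit, so $\mathfrak{A}(\widetilde{A})=M_k(\C)$, and irreducibility follows.
\item You work on the subspace side, by contrapositive. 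The same projections force every common invariant subspace to be a coordinate subspace $\SP\{e_i: i\in S\}$. Invariance then rules out edges entering $S$ from its complement.
\end{itemize}

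Both arguments rest on the same fact: the spectral projections of $\widetilde{A}_1$ are polynomials in $\widetilde{A}_1$.

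The paper's version is constructive in the algebra. It exhibits explicit words generating $M_k(\C)$, in line with the algebraic-extension viewpoint of Section~\ref{exten}.

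Your version needs no appeal to Burnside's theorem and yields more information. Combined with the easy converse, it shows that the common invariant subspaces of $\widetilde{A}$ are exactly the spans $\SP\{e_i: i\in S\}$ with no edge $\overrightarrow{ji}$ from $j\notin S$ to $i\in S$. In the Hermitian case the decomposition of Corollary~\ref{components1} can be read off from this immediately.
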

\begin{proof}
For the ``only if" part of the theorem, if $\widetilde{\mathcal{G}}_A$ is not strongly connected, then, as $A$ is similar to $\widetilde{A}$, by Theorem \ref{not connected} the tuple $A$ must be reducible.

Conversely, suppose that $Q_A$ has no repeated factors, and $\widetilde{\mathcal{G}}_A$ is strongly connected. In light of Proposition \ref{transform invariant} and Theorem \ref{adm}, without loss of generality we may assume that $\widetilde{A}$ is admissible and $A_1$ is invertible. This implies that all eigenvalues of $A_1$ are non-trivial and have multiplicity 1. Let $\{e_1,...,e_k\}$ be an eigenbasis for $A_1$. Since all eigenvalues of $A_1$ have multiplicity 1, each projection $\mathcal{P}_i:\C^k\to \C e_i$ is of rank 1. As before, we denote by $E_{ij}$ the elementary matrices in the basis $\{e_1,...,e_k\}$. Then for all $1\leq i,j \leq k$ and $1\leq m\leq n$, we have $$\mathcal{P}_i \widetilde{A}_m\mathcal{P}_j=\widetilde{a}_{ij}^m E_{ij}. $$
Proposition \ref{spectral projections} shows that if $\overrightarrow{ij}\in \widetilde{\mathcal{G}}_A$ then $E_{ij}\in \mathfrak{A}(\widetilde{A})$. Since $\widetilde{\mathcal{G}}_A$ is strongly connected, for every pair $(i, j)$ there are $i=i_1, i_2,...,,i_p=j$ such that $\overrightarrow{i_ri}_{r+1}\in \widetilde{\mathcal{G}}_A$, and we have
$$E_{ij}=\displaystyle \prod_{r=1}^{p-1}E_{i_r i_{r+1}}\in \mathfrak{A}(\widetilde{A}). $$ 
By Burnside's theorem $\widetilde{A}$ (hence $A$) is irreducible.
\end{proof}

\begin{remark}
\normalfont It is worth noting that if $A$ is a Hermitian tuple, then instead of using a directed graph, we can consider the ordinary one: the edge $(ij)$ belongs to $\mathcal{G}_A$ if for some $m$ we have $a^m_{ij}\neq 0$. Since the tuple is Hermitian, we have that $(ij)\in \mathcal{G}_A \Longleftrightarrow (ji)\in \mathcal{G}_A$.	
\end{remark}


\begin{corollary}\label{components1}
Assume $A$ is a Hermitian tuple such that $Q_A$ is minimal. If the characteristic graph $\widetilde{\mathcal{G}}_A$ is the union of $r$ strongly connected components, then  $\C^k$ is a direct sum of $r$ reducing subspaces for $A$, and the restriction of $A$ to each of these subspaces  is irreducible. 	
\end{corollary}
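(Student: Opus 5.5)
Throughout, $Q_A$ is minimal, so by Theorem \ref{chacteristic minimal} it has no repeated factors. After an admissible transformation (Theorem \ref{adm}, Proposition \ref{transform invariant}), which preserves both the lattice of common invariant subspaces and the characteristic graph, we may assume every $A_m$ has $k$ distinct eigenvalues. Since $A$ is Hermitian, $A_1$ is diagonalized by a unitary matrix $C$, so $\widetilde{A}=CAC^{-1}$ is again a Hermitian tuple, unitarily equivalent to $A$. By the remark preceding the corollary, $\widetilde{\mathcal{G}}_A$ may be regarded as an undirected graph. Its strongly connected components $S_1,\dots,S_r$ are then simply its connected components, and they partition the vertex set $\{1,\dots,k\}$.

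Let $V_p=\SP\{e_i: i\in S_p\}$, where $e_1,\dots,e_k$ is the orthonormal eigenbasis of $A_1$. If $i\in S_p$ and $j\notin S_p$, then there is no edge between $i$ and $j$. Hence $\widetilde{a}^m_{ij}=\widetilde{a}^m_{ji}=0$ for every $m$, so each $\widetilde{A}_m$ is block diagonal with respect to the orthogonal decomposition $\C^k=V_1\oplus\cdots\oplus V_r$. Each $V_p$ is therefore reducing for $\widetilde{A}$, and the corresponding subspaces $C^{-1}V_p$ are reducing for $A$.

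To prove irreducibility on each summand, let $A^{(p)}$ denote the compression of $\widetilde{A}$ to $V_p$. Because the tuple is block diagonal,
\[Q_{\widetilde{A}}=\prod_{p=1}^r Q_{A^{(p)}}.\]
Each factor $Q_{A^{(p)}}$ therefore has no repeated factors, since a repeated factor of $Q_{A^{(p)}}$ would be a repeated factor of $Q_A$. The first matrix $A^{(p)}_1$ is diagonal in the basis $\{e_i\}_{i\in S_p}$, so the characteristic graph of $A^{(p)}$ computed as in $\widetilde{\mathcal{G}}$ is exactly the induced subgraph on $S_p$, which is connected. Theorem \ref{graph connected} applied to $A^{(p)}$ then gives that $A^{(p)}$ is irreducible.

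The main point to check is that Theorem \ref{graph connected} applies to the subtuple. Its hypotheses are only that $Q_{A^{(p)}}$ has no repeated factors and that the graph is strongly connected. Its proof uses admissibility, but admissibility can be arranged on $V_p$ by a further small transformation, which by Proposition \ref{transform invariant} does not change the graph. We also need that the diagonalizing basis of $A_1^{(p)}$ is the restriction of $\{e_i\}$; this holds because $A_1^{(p)}$ has distinct eigenvalues, so its eigenbasis is unique up to scaling.
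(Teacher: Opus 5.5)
Your proposal is correct and follows essentially the same route as the paper. Both arguments use the Hermitian structure to see that the components partition the vertices. Both then observe that the absence of edges between components makes each $\widetilde{A}_m$ block diagonal in the orthonormal eigenbasis of $\widetilde{A}_1$, so the spans $V_p$ are reducing, and finally invoke Theorem \ref{graph connected} on each block. Your additional checks simply spell out what the paper's one-line application of Theorem \ref{graph connected} leaves implicit: that $Q_{\widetilde{A}}=\prod_p Q_{A^{(p)}}$ forces each block to have no repeated factors, and that the graph of each block is the induced subgraph on $S_p$ computed in the restricted eigenbasis.
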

\begin{proof}
Observe that since  $\widetilde{A}$ is Hermitian, each pair $(i,j)$ such that $\overrightarrow{ij}\in \widetilde{\mathcal{G}}_A$ is contained in a strongly connected component, and that $\{1,...,k\}$ is a disjoint union of strongly connected components:
$$\{1,...,k\}=\cup_{p=1}^r S_p,  $$
Without loss of generality we may assume that $S_1$ has vertices $1,...,i_1$, \ $S_2$ has $i_1+1,..., i_2$, ... , and $S_r$ has $i_{r-1}+1,...,i_r=k$. 	 

Since $Q_A$ is minimal, it has no repeated factors (Theorem \ref{chacteristic minimal}). Based on the arguments in the proof of Theorem \ref{graph connected}, $\widetilde{A}_1$ has an orthonormal eigenbasis $\{e_1,...,e_k\}$. Then, for distinct values $1\leq p,q \leq r$ and any $i\in S_p, \ j\in S_q$, we have 
\begin{equation}\label{amij}
\langle \widetilde{A}_me_i, e_j\rangle=\widetilde{a}^m_{ij}=0, \ \ \ m=1, ..., k,
\end{equation}
since if $\widetilde{a}_{ij}^m\neq 0$ for some $m$, then $S_p\cup S_q$ would be strongly connected. This implies that each of subspaces
$$ L_j=span \{e_{i_{j-1}+1},..., e_{i_j} \}$$
is reducing for $\widetilde{A}$. By Theorem \ref{graph connected} the restriction of $\widetilde{A}$ to each $L_j$ is irreducible. The result follows, as $A\sim \widetilde{A}$.
\end{proof}

\begin{remark}
\normalfont Note again that in the proof above we explicitly constructed the reducing subspaces.	
\end{remark}

\begin{example}
\normalfont Let $A=(A_1,A_2)$ be given by
$$A_1=\mbox{diag}(1,2,3,4), \ A_2=\left( \begin{array}{cccc} 0&0& 1& 0 \\ 0&0&0 &1\\1&0&0&0 \\0&1&0&0\end{array}\right). $$	
In this case $A_1$ is diagonal, so $\mathcal{G}_A=\widetilde{\mathcal{G}}_A$. This graph is presented in Fig. 2 (D), which shows that there are two strongly connected components: one with the vertices $\{1,3\}$, and the other - with $\{2,4\}$. Respectively, span$\{e_1,e_3\}$ and span$\{e_2,e_4\}$ are the corresponding reducing subspaces.
\end{example}

\subsection{A Refined Reducibility Test}


\vspace{.2cm}


We can now describe our procedure of testing reducibility. Suppose that $A$ is an admissible Hermitian tuple, and $Q_A$ is given by \eqref{spectrum2}. Passing to a similar tuple, if necessary, we may assume that $A_1$ is diagonal. 

\vspace{.2cm}

\textbf{Step 1}.

\vspace{.2cm}

First we check if $\mathcal{G}_A$ is strongly connected. If it is not, $A$ is reducible and  the test stops. 

If $\mathcal{G}_A$ is strongly connected and $Q_A$ has no repeated factors, Theorem \ref{graph connected} implies that $A$ is irreducible, and the test stops. 

If $\mathcal{G}_A$ is strongly connected and $Q_A$ has  repeated factors, we extend $A$ to $\mathcal{A}_1(A)$. Remark \ref{ext} shows that $\mathcal{G}_{\mathcal{A}_1(A)}$ is strongly connected too. If $Q_{\mathcal{A}_1(A)}$ has no repeated factors, Theorem \ref{graph connected} shows that $\mathcal{A}_1(A)$ is irreducible, and so is $A$, and the test stops. If $Q_{\mathcal{A}_1(A)}$ has repeated factors, and the extension $A\subset \mathcal{A}_1(A)$ is spectrally stable, Theorem \ref{repeated multiplicities not 1} shows that $A$ is reducible, and the test stops.

Finally, if $\mathcal{G}_A$ is strongly connected, $Q_{\mathcal{A}_1(A)}$ has repeated factors, and the extension $A\subset \mathcal{A}_1(A)$ is not spectrally stable, we pass to to the next step. 

\vspace{.2cm}
\textbf{Further steps}

\vspace{.2cm}

For every $j$ the $j$-th  step starts with defining $\mathcal{A}_j(A)$. An easy way to define $\mathcal{A}_j(A)=\mathcal{A}_1(\mathcal{A}_{j-1}(A))$. Unfortunately, $\mathcal{A}_r(A)$ are not Hermitian tuples, as $W_{IJ}$ may not be Hermitian, but it is easy to circumvent this obstacle. 

Suppose that a matrix tuple $\mathcal{B}$ is invariant with respect to taking adjoints, that is, if 
$A\in \mathcal{B}$, then $A^*\in \mathcal{B}$. Let us denote by $\widetilde{\mathcal{B}}$ the tuple obtained from $\mathcal{B}$ by the linear transformation
\begin{eqnarray*} 
&\mbox{if} \ A=A^*, \ \mbox{then} \ A\mapsto A, \\
&\mbox{if} \ A\neq A^*, \ \mbox{then} \ (A,A^*)\mapsto \big(\frac{1}{2}(A+A^*), \frac{i}{2}(A-A^*)\big).
\end{eqnarray*}
Then $\widetilde{\mathcal{B}}$ is a Hermitian tuple.

For $I=(i_1,...,i_p)$ write $\widehat{I}=(i_p,i_{p-1},...,i_1)$. If $\mathcal{B}$ is a Hermitian tuple, then  $\mathcal{A}_1(\mathcal{B})$ is invariant under passing to adjoints, as 
  $(W_{IJ})^*=W_{\widehat{I} \widehat{J}}$. Therefore, $\widetilde{\mathcal{A}_1(\mathcal{B})}$ is Hermitian, 
 and, of course, $\mathcal{B}\subset \widetilde{\mathcal{A}_1(\mathcal{B})}$ is spectrally stable if and only if $\mathcal{B} \subset \mathcal{A}_1(\mathcal{B})$ is.

Now, we define $\mathcal{A}_j(A)$ inductively. Choose a matrix $D_{j-1}$ that is close to the identity and is admissible for the tuple $\widetilde{\mathcal{A}_{j-1}(A)}$ and define
$$ \mathcal{A}_j(A)=\mathcal{A}_1(D_{j-1} \widetilde{\mathcal{A}_{j-1}(A)}).$$
\noindent Note that the choice of $D_{j-1}$ does not affect our process, as the spectral component multiplicities remain the same.

Finally, in the $j$-th step we apply the operations in step 1 to $\mathcal{A}_j(A)$, except that the check whether $\mathcal{G}_{\mathcal{A}_j(A)}$ is strongly connected is not necessary for $j>1$: $A\subset \mathcal{A}_j(A)$, so it is.

\vspace{.2cm}

We remark that, in order to pass to the next step, the extension $\mathcal{A}_{j-1}(A)\subset \mathcal{A}_j(A)$ must not be spectrally stable.  
Therefore, for some spectral component multiplicities must decrease. 

The multiplicity of a spectral component cannot be below 1. Also, if the multiplicity of the component $\Gamma_i$ is equal to 1, then for every extended tuple $\Gamma_i$ belongs to a single spectral component $\widetilde{\Gamma}_r$ of the extended tuple, and the multiplicity of $\widetilde{\Gamma}_r$ is equal to 1. 
Thus, the total number of steps before reaching a conclusion cannot be more  than the sum of multiplicities $\l_j$ of components in \eqref{spectrum1} (the spectral multiplicity $\nu(A)$) minus the number of components $s$ (the spectral index $\kappa(A))$. The following theorem summarizes the preceding discussions.

\begin{theorem}\label{test}
Let $A$ be a Hermitian tuple. Then the reducibility test reaches a conclusion in no more than $\nu(A)-\kappa(A)$ steps.	
\end{theorem}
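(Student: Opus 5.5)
The plan is a potential-function argument. The potential is the quantity
\[\Phi(\mathcal{B})=\nu(\mathcal{B})-\kappa(\mathcal{B}),\]
evaluated along the chain of tuples $A=\mathcal{A}_0(A)\subset \mathcal{A}_1(A)\subset \mathcal{A}_2(A)\subset\cdots$ produced by the test. We first make three preliminary reductions. Shifting by scalar multiples of the identity, as in \eqref{char1}, preserves $\nu$ and $\kappa$. So does the hermitization $\mathcal{B}\mapsto\widetilde{\mathcal{B}}$, being an invertible linear change of the pencil variables. So does passing to an admissible transformation close to the identity, by \eqref{irrfactor}. Hence we may compute $\Phi$ on whichever representative is convenient at each stage.

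The first key step is that $\Phi\ge 0$, and $\Phi=0$ exactly when $Q_{\mathcal B}$ has no repeated factors. At that point Step 1 terminates: either by Theorem \ref{graph connected}, since strong connectedness is inherited by Remark \ref{ext}, or because the graph is disconnected. Here $\Phi$ is computed from the factorization $\sum_j t_j\widetilde{\Gamma}_j$.

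The second key step is to show that whenever the test does not stop at stage $j$, $\Phi$ strictly decreases: $\Phi(\mathcal{A}_{j}(A))\le \Phi(\mathcal{A}_{j-1}(A))-1$. By the description of the test, continuing requires that the extension $\mathcal{A}_{j-1}(A)\subset\mathcal{A}_j(A)$ is not spectrally stable. Using the trichotomy a), b), c) preceding the definition of spectral stability, applied at the coordinate points $\widetilde{\tau}_j^i$ of an admissible representative, each old component $\Gamma_r$ of multiplicity $l_r$ is contained in the union of new components $\widetilde\Gamma_{i_1},\dots,\widetilde\Gamma_{i_p}$. These satisfy $l_r=\sum_m\alpha_m t_{i_m}$ with $t_{i_m}\le l_r$. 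Assigning to each old component the new components through it, one can bound the contribution of $\Gamma_r$ to the new potential by $l_r-1$, and strictly smaller in case b) or c). It remains to compare multiplicities component by component: $\nu$ counts $\sum t$, $\kappa$ counts components, and a splitting that occurs in case b) or c) lowers $t-1$ summed over the pieces.

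The main obstacle is making this bookkeeping honest. A new component $\widetilde\Gamma$ may pass through points over several old components, so it would be double-counted. Also, in case b) the multiplicity drops from $l_r$ to $t_m$ with $\alpha t_m=l_r$, but the degree, not the number, of components changes. I would therefore organize the count by the multiplicities of eigenvalues of a single coordinate matrix $A_1$: by admissibility these equal the component multiplicities. I would use the monovariant $\sum_{\text{eigenvalues }\lambda}(\mathrm{mult}(\lambda)-1)$ restricted to a representative eigenvalue from each component, and argue that non-stability forces some eigenvalue of multiplicity $\ge2$ to split or to lose multiplicity. Components of multiplicity $1$ stay in single multiplicity-one components, as noted before the theorem, and never contribute. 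Since initially the sum of $(l_j-1)$ over components equals $\nu(A)-\kappa(A)$, and it drops by at least one per non-terminal step, the test stops within $\nu(A)-\kappa(A)$ steps. If the count by eigenvalues proves awkward, I would fall back on the weaker but cleaner monovariant $\sum_r (l_r-1)$ tracked through the inclusion of each old component in new ones.
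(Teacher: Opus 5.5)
Your potential-function argument with $\Phi=\nu-\kappa=\sum_r(l_r-1)$ is correct and is essentially the paper's argument: a further step requires a non-spectrally-stable extension, so some component multiplicity drops, while multiplicities never go below $1$ and multiplicity-one components stay multiplicity one. The double counting you worry about is harmless, since it only overestimates the new potential: writing $\widetilde R_m(z_0,z,0)=\prod_r R_r^{\beta_{mr}}$, so that $\sum_m\beta_{mr}t_m=l_r$, one gets $\sum_m(t_m-1)\le\sum_r\sum_{m:\beta_{mr}\ge1}(t_m-1)\le\sum_r(l_r-1)$, with strict inequality as soon as condition a) fails along some $\Gamma_r$.
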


\begin{remark} \normalfont Suppose that the spectral test of reducibility for a tuple $A$ stopped at step $p<\nu(A)-\kappa(A)$. Then for every $j>p$ the reducibility test applied to $\mathcal{A}_j(A)$ produces the same test result. 
\end{remark}

To help illustrate Theorem \ref{test}, let us take another look at Example 8.2. Assume $S$ and $T$ satisfy the additional condition that $\det(z_0^2-z_1^2S^2-z_2^2T^2)$ is irreducible (the generic case). Then $\kappa(A)=2$ and $\nu(A)=r+1$, and it follows that the reducibility test reaches a conclusion in at most $r-1$ steps. In particular, if $r=2$ then conclusion is reached in one step.

\vspace{.2cm}

\subsection{Back to Kippenhahn's Conjecture} Now, we are ready to give a necessary and sufficient condition for a positive answer in Kippenhahn's conjecture. Since the tuples in concern are Hermitian, without loss of generality we assume $A_1$ is diagonal.

\begin{theorem}\label{kippen}
Assume $A$ is a Hermitian tuple such that $Q_A$ has repeated factors. Then $A$ is reducible if and only if either $\mathcal{G}_A$ is not strongly connected, or $\mathcal{G}_A$ is strongly connected and $Q_{\mathcal{A}_{\nu (A)}(A)}$ has repeated factors.
\end{theorem}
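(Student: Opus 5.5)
We read Theorem \ref{kippen} as the termination statement of the spectral reducibility test, and we prove it by running the test of Section \ref{characteristic graph} on $A$. By Theorem \ref{adm} and Proposition \ref{transform invariant} we may assume $A$ is admissible with $A_1$ diagonal, without changing $\mathcal{G}_A$ or the lattice of common invariant subspaces.

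\emph{First alternative.} If $\mathcal{G}_A$ is not strongly connected, then $A$ itself is a tuple similar to $A$ with non-strongly-connected graph, so Theorem \ref{not connected} gives reducibility. Conversely, suppose $\mathcal{G}_A$ is strongly connected. Each $\mathcal{A}_j(A)$ is an algebraic extension of $A$: the projections $\mathcal{P}_j$ lie in $\mathfrak{A}(A)$ by (\ref{projections}), and the Hermitian symmetrization together with the admissible transformations $D_{j-1}$ are invertible linear recombinations within the algebra. Hence $\mathfrak{A}(\mathcal{A}_j(A))=\mathfrak{A}(A)$ for every $j$, and so $A$ and $\mathcal{A}_j(A)$ have the same common invariant subspaces.

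\emph{Second alternative.} Set $\mathcal{B}=\mathcal{A}_{\nu(A)}(A)$. If $Q_{\mathcal{B}}$ has no repeated factors, then Remark \ref{ext} shows that $\mathcal{G}_{\mathcal{B}}$ is strongly connected. After diagonalizing its first entry, Theorem \ref{graph connected} gives irreducibility of $\mathcal{B}$ provided $\widetilde{\mathcal{G}}_{\mathcal{B}}$ is strongly connected, and hence irreducibility of $A$. The delicate point is that strong connectedness of $\mathcal{G}_A$ must survive this rediagonalization. I would handle it by noting that $A_1$ stays a diagonal member of every extension, so the eigenbasis of the new generic first entry refines that of $A_1$. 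Alternatively, I would phrase the step via Theorem \ref{test}: the test stops with ``irreducible'' exactly in this branch.

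Conversely, suppose $Q_{\mathcal{B}}$ has repeated factors. By Theorem \ref{test}, the test has already terminated by step $\nu(A)-\kappa(A)\le\nu(A)$, and by the remark after it all later extensions give the same verdict. Since repeated factors persist, the terminating branch cannot be the ``no repeated factors'' branch. It must therefore be the spectrally stable branch, where Theorem \ref{repeated multiplicities not 1} yields reducibility of the intermediate tuple $\mathcal{A}_j(A)$, and hence of $A$ because the algebras coincide. For the converse direction: if $A$ is reducible and $\mathcal{G}_A$ is connected, every extension is reducible, so $Q_{\mathcal{B}}$ cannot be repeated-factor-free by the previous paragraph.

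\emph{Main obstacle.} The hardest step is justifying that at stage $\nu(A)$ the extension is genuinely stable. Each non-stable passage strictly lowers some component multiplicity, multiplicities never go below $1$, and multiplicity-one components stay multiplicity one. By counting, after at most $\nu(A)-\kappa(A)$ non-stable steps every further extension is spectrally stable. So if repeated factors remain, some $t_j\ge 2$ with a stable extension, and Theorem \ref{repeated multiplicities not 1} applies. I would write this monotonicity argument carefully using the case analysis a)--c) of Section \ref{Kippenhahn}.
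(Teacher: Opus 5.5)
Your route is the paper's: Theorem \ref{kippen} is read off from the spectral reducibility test, via Theorems \ref{not connected}, \ref{repeated multiplicities not 1}, \ref{graph connected} and \ref{test}.

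\textbf{The ``if'' half is fine, modulo those results.} Theorem \ref{not connected} handles a non-strongly-connected $\mathcal{G}_A$. If $Q_{\mathcal{A}_{\nu(A)}(A)}$ still has repeated factors, at most $\nu(A)-\kappa(A)<\nu(A)$ passages are non-stable. So some stable passage with repeated factors occurs, and Theorem \ref{repeated multiplicities not 1} applies.

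\textbf{The genuine gap is the ``only if'' half}, namely ``$\mathcal{G}_A$ strongly connected and $Q_{\mathcal{B}}$ square-free $\Rightarrow$ $A$ irreducible''. It sits exactly at the point you call delicate. Theorem \ref{graph connected} needs strong connectedness of $\widetilde{\mathcal{G}}_{\mathcal{B}}$, the graph in the eigenbasis of a simple-spectrum (admissibly transformed) element of $\mathcal{B}$. You only know strong connectedness of $\mathcal{G}_A$, computed in one of infinitely many orthonormal eigenbases of $A_1$; these are non-unique because $A_1$ has repeated eigenvalues, as $Q_A$ has repeated factors.

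Your remedy is false. The new first entry generally does not commute with $A_1$, so its eigenvectors need not lie in eigenspaces of $A_1$. Even if they did, they would form a different basis inside those eigenspaces, and strong connectedness is not invariant under that change.

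Your opening reduction has the same defect. Proposition \ref{transform invariant} keeps the basis fixed, but the transformed first entry is no longer diagonal in it. Falling back on Theorem \ref{test} does not help either: Step 1 of the test makes exactly this leap, using Remark \ref{ext} in the old basis and then Theorem \ref{graph connected} in a new one.

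\textbf{In fact the implication fails.} Let $A_i=\sigma_i\oplus(-\sigma_i)$, $i=1,2,3$, with the Pauli matrices of Example \ref{Pmatrices}. Then:
\begin{itemize}
\item $Q_A=q^2$ with $q=z_0^2-z_1^2-z_2^2-z_3^2$, so $\nu(A)=2$;
\item $A$ is admissible, and it is reducible, since $\C^2\oplus 0$ reduces it.
\end{itemize}
Put $u_\pm=(1,\pm1)^T/\sqrt2$. The orthonormal basis $f_{1,2}=(u_+\oplus(\pm u_-))/\sqrt2$, $f_{3,4}=(u_-\oplus(\pm u_+))/\sqrt2$ makes $A_1=\diag(1,1,-1,-1)$. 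In it, $A_2f_1=-if_3$, $A_2f_2=-if_4$, $A_3f_1=f_4$ and $A_3f_2=f_3$. So $\mathcal{G}_A$ contains the cycle $1\to3\to2\to4\to1$ and is strongly connected.

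Now take $\mathcal{P}_1=P_+\oplus P_-$ with $P_\pm=(I\pm\sigma_1)/2$. One gets $A_2\mathcal{P}_1A_3=(-iP_-)\oplus(iP_+)$, and
\[
\det\big(z_0I+z_1A_1+z_2A_2+z_3A_3+wA_2\mathcal{P}_1A_3\big)=\big(q-iw(z_0+z_1)\big)\big(q+iw(z_0+z_1)\big),
\]
a product of two distinct irreducible quadrics. The spans of $\mathcal{A}_1(A)$ and $\mathcal{A}_2(A)$ contain this subtuple. Hence $Q_{\mathcal{A}_1(A)}$ and $Q_{\mathcal{A}_2(A)}$ specialize to this polynomial under linear substitutions fixing $z_0$. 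Such substitutions would carry a repeated factor to a repeated factor, since each factor keeps its $z_0$-leading term. So $Q_{\mathcal{A}_{\nu(A)}(A)}$ has no repeated factors, even though $A$ is reducible and $\mathcal{G}_A$ is strongly connected.

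\textbf{What survives.} For an arbitrary diagonalizing basis, the ``only if'' direction cannot be proved along these lines. It does hold if ``$\mathcal{G}_A$ not strongly connected'' is read as ``for some orthonormal eigenbasis of $A_1$''. Then one picks an eigenbasis adapted to a reducing subspace, whose projection commutes with $A_1$. But in that reading the direction is immediate and the test plays no role.
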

The proof of this theorem follows directly from Theorems \ref{repeated multiplicities not 1},  \ref{graph connected}, and \ref{test}.

 
\begin{example}. \normalfont Let $A=(A_1,A_2)$ be an admissible pair of $2m\times 2m$ invertible Hermitian matrices, and suppose that 
$$ Q_A= \big (R(z_0,z_1,z_2)\big)^2,$$
where $R$ is an irreducible [polynomial of degree $m$.  The matrix $A_1$ has $m$ distinct eigenvalues, each of multiplicity 2. 
By Theorem \ref{test} we need only 1 step to determine whether the tuple is reducible, and the tuple extension  $\mathcal{A}_1(A)$ is sufficient for coming to  a conclusion.

There are 3 possible scenarios:
\begin{itemize}
\item[1)] $\sigma_p^d(\mathcal{A}_1(A))=\big \{ \mathcal{R}(z)=0\big\}	$, where $\mathcal{R}$ is an irreducible polynomial of degree $2n$. In this case the pair $(A_1,A_2)$ is irreducible.
\item[2)] $\sigma_p^d(\mathcal{A}_1(A))= \big\{ \mathcal{R}_1(z)=0\big\}+\big\{ \mathcal{R}_2(z)=0\big\}$, where both $\mathcal{R}_1$ and $\mathcal{R}_2$ are irreducible polynomials of degree $n$. Theorem \ref{graph connected} implies that if the characteristic graph of $\widetilde{\mathcal{A}_1(A)}$ is connected, then $A$ is irreducible. Otherwise it is reducible, and the proof of Corollary \ref{components1} shows how to  find reducing subspaces.

\item[3)] $\sigma_p^d(\mathcal{A}_1(A))=2\big\{ (\mathcal{R}(z))=0\big\} $, where $\mathcal{R}$ is an irreducible polynomial of degree $m$. Theorems \ref{repeated multiplicities not 1} and \ref{test} imply that in this case $(A_1,A_2)$ is reducible. To identify a reducing subspace we follow the method described in the proof of Theorem 3.1 in \cite{S2}.

Once again, let $\lambda_1,...,\lambda_m$ be distinct eigenvalues of $A_1$ each of multiplicity 2, and let $e_1,...,e_{2m}$ be an eigenbasis for $A_1$ in which $A_1$ is represented as $A_1=\diag(\lambda_1,\lambda_1,\lambda_2,\lambda_2,...,\lambda_m,\lambda_m)$. In our case the proof of Theorem 3.1in \cite{S2} shows that in this basis $A_2$ is written in the form
$$ A_2=\left [\begin{array}{ccc} c_{11}u_{11} & ...& c_{1m}u_{1m}\\ \cdot & \cdot & \cdot \\ c_{n1}u_{m1} & ... &c_{mm}u_{mm}  \end{array}  \right ],$$
where $c_{ij}\in \C$, and $u_{ij}$ are unitary $2\times 2$ matrices satisfying the following relations:

\begin{itemize}
\item[1.] $u_{ii}=I_2$.
\item[2.] $u_{ji}=u_{ij}^*, \ c_{ji}=\overline{c_{ij}} $.
\item[3.] $u_{i_1 i_2}u_{i_2i_3}\cdots u_{i_{p-1} i_p}=e^{i\theta}u_{i_1 i_p}$. 
\end{itemize}

 Also, the proof of Theorem 3.1 in \cite{S2} shows that in our case, if $U$ is the following unitary  block-diagonal matrix with $2\times 2$ diagonal blocks
$$ U=\left [ \begin{array}{ccccc} u_{m1}&0 &...& 0&0\\ 0 & u_{m2}& ...& 0&0\\ \cdot & \cdot & \cdot & \cdot &\cdot \\ 0&\cdot &\cdot & u_{m (m-1)} &0\\ 0& \cdot &\cdot& 0 & I_2\end{array} \right ],$$
then span$\{Ue_1, Ue_3,...,Ue_{2m-1}\}$ is reducing for both $A_1$ and $A_2$.
\end{itemize}
\end{example}
\vspace{.2cm}


\section{Concluding Remarks}\label{concluding}

This paper has developed a framework connecting the reducibility of linear representations and matrix tuples to the geometric and algebraic properties of their joint characteristic polynomials.
It extends the classic Cayley-Hamilton theorem and the concept of minimal polynomial to several variables. Crucially, by combining this work with local spectral analysis, this paper provides a complete resolution to Kippenhahn's long-standing 1951 conjecture.

The interplay between algebraic extensions, spectral geometry, and representation theory opens compelling pathways for subsequent study: 

1) The minimal tuple extension and word length problem (Problem 6.9) invites deeper analyses on the 130-year old Frobenius theorem.

2) For matrix tuples $A$ such that $Q_A$ has no repeated factors, the characteristic graph offers an efficient mechanism for determining their reducibility and constructing common invariant subspaces (Corollary 9.8). It is worth studying whether this method can be extended to more general tuples.

3) While this paper bridges classical work on group determinants with modern multivariable operator theory, the characteristic polynomial approach remains largely unexplored in other theoretic contexts where noncommuting matrices play fundamental roles, such as quantum physics, engineering, and artificial intelligence. Accordingly, investigating its potential applications to these domains will be a significant and timely undertaking.

\end{document}